\DeclareMathOperator{\crit}{crit}
\DeclareMathOperator{\diff}{Diff}
\DeclareMathOperator{\Ker}{Ker}
\DeclareMathOperator{\Mod}{Mod}
\DeclareMathOperator{\id}{id}
\newcommand{\co}{\colon\thinspace}
\newcommand{\R}{\mathbb{R}}
\newcommand{\Z}{\mathbb{Z}}
\newcommand{\N}{\mathbb{N}}
\newcommand{\CP}{\mathbb{CP}{}^{2}}
\newcommand{\CPb}{\overline{\mathbb{CP}}{}^{2}}
\newcommand{\MYhref}[3][blue]{\href{#2}{\color{#1}{#3}}}
\theoremstyle{definition}
\newtheorem{thm}{Theorem}[section]
\newtheorem{lemma}[thm]{Lemma}
\newtheorem{sublemma}[thm]{Sublemma}
\newtheorem{ex}[thm]{Example}
\newtheorem{rmk}[thm]{Remark}
\newtheorem{prop}[thm]{Proposition}
\newtheorem{cor}[thm]{Corollary}
\newtheorem{df}[thm]{Definition}
\title{Uniqueness of surface diagrams of smooth 4-manifolds}
\author[Jonathan D. Williams]{Jonathan D. Williams} 
\address{Department of Mathematical Sciences, Binghamton University}
\email{\MYhref{mailto:jdw.math@gmail.com}{jdw.math@gmail.com}}
\begin{document}
\begin{abstract}Previously the author presented a new way to specify any smooth, closed oriented 4-manifold by an orientable surface decorated with simple closed curves. These curves are cyclically indexed, and each curve has a unique transverse intersection with the next. These are called surface diagrams, and each comes from a certain type of map from the 4-manifold to the 2-sphere. Given such a map, the corresponding surface diagram is defined up to diffeomorphism and isotopy of individual curves. The aim of this paper is to give a uniqueness theorem stating that, for maps within any fixed homotopy class, surface diagrams are unique up to four moves: stabilization, handleslide, multislide, and shift.\end{abstract}
\maketitle
\tableofcontents
\section{Introduction}\label{introduction}
This decidedly old-school paper is concerned with a certain class of maps from an arbitrary smooth closed orientable 4-manifold $M$ to the 2-sphere, introduced as \emph{simplified purely wrinkled fibrations} in \cite{W1} and more briefly called \emph{simple wrinkled fibrations} in \cite{BH}. Under mild conditions, such a map defines a \emph{surface diagram} $(\Sigma,\Gamma)$ of $M$, in which $\Sigma$ is a closed, orientable surface decorated with a collection $\Gamma$ of simple closed curves. A surface diagram specifies $M$ up to diffeomorphism. Section \ref{moves} describes four moves on surface diagrams for a fixed 4-manifold and explains how these moves come from certain model homotopies of the fibration map. Section \ref{proof} proves that, for a fixed homotopy class of maps $M\to S^2$, any pair of simplified purely wrinkled fibrations are the endpoints of some sequence of these model homotopies. Though the moves of \cite{W1} are sufficient to relate even maps in different homotopy classes, it is an open question to determine the diagrammatic relation between surface diagrams coming from different homotopy classes of maps $M\to S^2$ (see \cite{KMT} for discussion of the collection of homotopy classes for a fixed 4-manifold). Here is the main result of the paper.
\begin{thm}\label{T}Suppose $\alpha_0,\alpha_1\co M\to S^2$ are two homotopic simplified purely wrinkled fibrations with surface diagrams $(\Sigma^i,\Gamma^i)$, $i=0,1$. Then there is a finite sequence of surface diagrams, beginning with $(\Sigma^0,\Gamma^0)$ and ending with $(\Sigma^1,\Gamma^1)$, obtained by performing stabilizations, handleslides, shifts, multislides, and their inverses.\end{thm}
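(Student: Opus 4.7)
\bigskip

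\noindent\textbf{Proof proposal.} The plan is to run a one-parameter Cerf-theoretic argument on the space of maps $M\to S^2$. First I would choose a generic homotopy $H\co M\times [0,1]\to S^2$ with $H_0=\alpha_0$ and $H_1=\alpha_1$, viewed equivalently as a generic map $\widetilde H\co M\times [0,1]\to S^2\times[0,1]$ commuting with projection to $[0,1]$. For generic $t$, the slice $H_t$ is a purely wrinkled fibration (folds and isolated cusps on the critical locus), and there are finitely many times $t_1<\cdots<t_N$ at which a codimension-one singular event occurs. The classification of such codimension-one events in one-parameter families of maps from a $4$-manifold to a surface is by now well understood (births of fold-fold pairs giving cusps, swallowtails, cusp merges, fold/cusp crossings, and wrinkle $R$-moves), so the job reduces to identifying the effect of each event on the surface diagram.

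Before reading off moves, I would first reduce to the \emph{simplified} class along the whole homotopy (away from the $t_i$). Since each $H_t$ is an arbitrary PWF, I would use the machinery of Section~\ref{moves} (and the homotopies producing those moves) to push cusps into a single circle in $S^2$ with the higher-genus side consistently on one side, exactly as in the definition of simplified PWFs; this reduction step might introduce finitely many extra events, each of which is itself one of the four moves. The outcome is a one-parameter family through simplified PWFs, punctured by finitely many codimension-one events.

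Next, for each event $t_i$ I would perform a local analysis in a neighborhood of the locus in $M\times[0,1]$ where the codimension-one singularity appears and read off the corresponding modification of the surface diagram. A fold-fold birth/death introduces or removes a pair of vanishing cycles meeting once, realizing a \emph{stabilization}; a swallowtail on the cusp circle produces a \emph{shift}; a fold-crossing event, in which two adjacent vanishing cycles exchange order along the cusp circle, yields a \emph{handleslide} when the crossing arc is null-homotopic in the complement of the reference fiber. When the crossing is mediated by a loop carrying non-trivial monodromy, one obtains the more general \emph{multislide}. The simply-connected case then follows because $\pi_1(M)=1$ forces every such crossing arc to be contractible relative to the reference fiber, so every multislide reduces to a composition of handleslides and isotopies.

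The main obstacle I anticipate is the global reduction to simplified PWFs throughout the homotopy: generic codimension-one events can temporarily destroy the ``single cusp circle with higher-genus inside" structure, so one must either push the event to a nearby time at which the structure is restored or insert auxiliary moves on either side of the event to bracket it. Carefully matching these brackets so that only the four advertised moves appear in the final word, and verifying that the path-dependence of the handleslide data is captured exactly by the multislide (rather than something more general), is where the bulk of the technical work will sit; once that bookkeeping is in place, the theorem falls out by concatenating the moves contributed by the finitely many events $t_1,\ldots,t_N$.
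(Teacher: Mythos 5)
Your overall framework---take a generic homotopy, enumerate its codimension-one events, and read each event off as a move---matches the paper's starting point only superficially; the proposal assumes away exactly where the work lies, and the dictionary you propose between events and moves is wrong in several places. First, a generic slice $H_t$ need not be a purely wrinkled fibration (definite folds occur generically in families), and the moves of Section~\ref{moves} cannot be used to fix this: they are defined only for maps that are already simplified, so invoking them to keep the whole family simplified is circular. The paper instead cites its $h$-principle result (and Gay--Kirby) to replace the homotopy by a deformation realized by birth, merge, flip and isotopy moves, and then spends essentially all of its effort reorganizing that deformation: making the critical surface connected, canceling birth points of the Morse function $T\co\crit\alpha\rightarrow[0,1]$ against merge points (via splicing swallowtail pairs along cusp arcs and a surgery on $\crit\alpha$), arranging that every remaining index-one critical point of $T$ is part of a shift or multislide pair (using Auroux's merge/inverse-merge reversal trick), and finally unlinking and normalizing the immersion locus of $\crit\alpha$ in $S^2_{[0,1]}$ so that every residual self-intersection circle belongs to a model stabilization or to a handleslide family in the sense of Lemma~\ref{hslemma}. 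Your plan to ``bracket'' bad events so the family stays simplified is precisely this content; it cannot be deferred as bookkeeping.

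Second, the event-to-move dictionary is incorrect. A swallowtail (flip) does not produce a shift: shifts and multislides both arise from a merge immediately followed by an inverse merge (a shift pair or multislide pair), distinguished by whether the relevant circle of joining curves bounds a disk; a single fold birth is not a stabilization---the stabilization deformation consists of two flips on the same fold arc followed by an $R_2$ move, and in the paper birth points are cancelled against merge points rather than read off as moves; handleslides come from $R_2$ moves in which two fold arcs cross into each other's higher-genus sides, not from an exchange of order of adjacent vanishing cycles along the cusp circle. The simply connected case is likewise handled differently: one caps the cusp circle of a would-be multislide pair with a disk of joining curves in $M_{[t_1,t_1+\epsilon]}$ and removes the pair outright, rather than decomposing multislides into handleslides. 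Finally, your outline omits the genus constraint $g\geq3$: the paper must commute handleslides past inverse stabilizations at the end so that every intermediate diagram is a legitimate surface diagram. Without these ingredients the proposal does not amount to a proof.
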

The remainder of this section gives background on the origins of this work, the fundamentals of surface diagrams and the maps that induce them, and an overview of the proof of Theorem \ref{T}.
 \subsection{Background}
The subject of simplified purely wrinkled fibrations arose from the study of broken Lefschetz fibrations on smooth 4-manifolds, which were first introduced in \cite{ADK} to generalize the correspondence between Lefschetz fibrations and symplectic 4-manifolds up to blowup (see \cite{GS} for further details on symplectic structures and Lefschetz fibrations). Roughly, the result of \cite{ADK} was that if $(M,\omega)$ is a \emph{near-symplectic} 4-manifold, then a suitable blowup of $(M,\omega)$ has a smooth map to the 2-sphere, called a \emph{broken Lefschetz fibration}, such that $\omega$ restricts to a volume form on any fiber that does not intersect its vanishing locus. The vanishing locus of a near-symplectic form is always a smooth 1-submanifold of $M$. Discussed below, broken Lefschetz fibrations are a generalization of Lefschetz fibrations in which the critical locus is allowed to contain a 1-submanifold of critical points with a particular local model (see Equation \ref{eq:fold1}); when the fibration corresponds to a near-symplectic form $\omega$, this critical 1-submanifold coincides with its vanishing locus; see for example \cite{B1,L1}. Though mild, this generalization greatly increases the collection of 4-manifolds that admit such a fibration structure: it is known that every smooth, orientable 4-manifold admits a broken Lefschetz fibration (though it may not correspond to any near-symplectic form); see for example \cite{AK,B2,L1}.

The study of broken Lefschetz fibrations is partly motivated by the effort to understand the Seiberg-Witten invariants of a smooth 4-manifold $M$ geometrically. When $b^{2+}$ is positive, the Seiberg-Witten invariants at their most basic level define a map from $H^2(M;\Z)$ to the integers, defined as an algebraic count of solutions to a nonlinear elliptic pair of partial differential equations on $M$ \cite{M}. In a 1996 paper, Taubes showed that, for symplectic 4-manifolds, solutions to the Seiberg-Witten equations correspond to pseudoholomorphic curves which contribute to a special Gromov invariant he defined, called $Gr$. Pseudoholomorphic curves are submanifolds of two real dimensions, possibly with singularites, that are singled out by the chosen symplectic structure and other auxiliary data (these are choices which are later shown to not affect the values of $Gr$) \cite{T1}.

Related efforts have revolved around equipping a given manifold with structures that resemble surface bundles, namely Morse functions in three dimensions and (broken) Lefschetz fibrations in four dimensions, then considering associated spaces of pseudoholomorphic curves. In 2003 Donaldson and Smith defined a \emph{standard surface count} for symplectic 4-manifolds, counting pseudoholomorphic curves which are sections of a fiber bundle associated to a Lefschetz fibration \cite{DS}. Soon after, Usher showed that the standard surface count is equivalent to $Gr$, and thus the Seiberg-Witten invariant \cite{U}. Though this equivalence is only known to hold for a suitably chosen Lefschetz fibration whose existence is equivalent to the existence of a symplectic form, it gave a promising inroad to generalization: broken Lefschetz fibrations offer a way to continue their approach into nonsymplectic territory, further guided by an existence result of \cite{T2} stating that if a near-symplectic 4-manifold has nonvanishing Seiberg-Witten invariant, then there is a pseudoholomorphic curve with boundary given by the vanishing locus of $\omega$. In 2007, Perutz defined a generalization of the standard surface count for near-symplectic broken Lefschetz fibrations, called the \emph{Lagrangian matching invariant}, that fits in nicely with natural expectations in numerous ways. For example, there are formal similarities with ideas in \cite{T2}, vanishing theorems and calculations in special cases that coincide with those for the Seiberg-Witten invariant. However, it remains to show that it is a smooth invariant of the underlying 4-manifold (not just the isomorphism class of its chosen fibration structure), and it is generally difficult to compute. A large amount of the motivation for studying surface diagrams is the possibility that they will prove useful in addressing such issues; see for example \cite{W3}.

\subsection{Critical points}\label{criticalpoints}
Let $M$ be a smooth closed 4-manifold. Any generic smooth map $M\to S^2$ resembles a surface bundle, except one allows the presence of a one-dimensional critical locus that is a union of \emph{fold} and \emph{cusp} points. The fold locus is an embedded smooth 1-submanifold of $M$ with the following local model at each point: \begin{equation}\label{eq:fold1}(x_1,x_2,x_3,x_4)\mapsto(x_1,x_2^2+x_3^2\pm x_4^2).\end{equation} When the sign above is negative, it is known variously as an \emph{indefinite fold}, \emph{round singularity}, or \emph{broken singularity} depending on context. Figure \ref{fold} is a picture of the target space, schematically depicting the fibration structure. In that figure, like the one to its right, what appears is the target disk of a map $D^4\to D^2$, with bold arcs representing the image of the critical locus. A surface is pictured in the region of regular values that have that surface as their preimage, and tracing point preimages above a horizontal arc from left to right gives the foliation of $\R^3$ by hyperboloids, first one-sheeted, then two-sheeted, with a double cone above the fold point. The circle (drawn on the cylinder to the left) that shrinks to the cone point is called the \emph{round vanishing cycle} for that arc in the context of broken Lefschetz fibrations. When dealing exclusively with purely wrinkled fibrations as this paper does, there are no Lefschetz vanishing cycles, so the term \emph{vanishing cycle} will suffice.

Each cusp point is a common endpoint of two open arcs of fold points, with the local model \begin{equation}\label{eq:cusp}(x_1,x_2,x_3,x_4) \mapsto (x_1,x_2^3-3x_1x_2+x_3^2\pm x_4^2).\end{equation} When the sign above is negative, it is called an \emph{indefinite cusp} and is adjacent to two indefinite fold arcs as in Figure \hyperlink{1b}{1b}: here two fold arcs meet at a cusp point, for which the two vanishing cycles must transversely intersect at a unique point in the fiber.

\begin{figure}\capstart\hypertarget{1b}%
	\centering
	\subfloat[Fold.]{\label{fold}\includegraphics{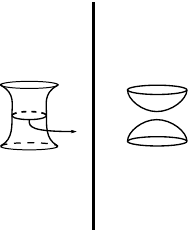}} \qquad \qquad \qquad \capstart
	\subfloat[Cusp.]{\label{cusp}\includegraphics{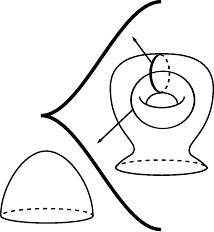}}
	\caption{\label{critpts}Critical points of purely wrinkled fibrations.}
\end{figure}

\subsection{Deformations of purely wrinkled fibrations}\label{defmovessection}
This paper is focused on one-parameter families of maps $M\to S^2$ that fail to be purely wrinkled fibrations at isolated points that have local models called \emph{birth, merge} and \emph{flip}. Using the same visual language of Figure \ref{critpts}, their effects on a purely wrinkled fibration appear in Figure \ref{defmoves}. Here follow the definitions from \cite[Section 2.4]{W1}.
\begin{figure}\capstart%
	\centering
	\subfloat[Birth.]{\label{movesbirth}\includegraphics{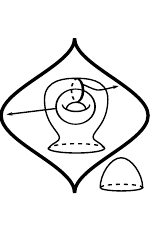}}\qquad \qquad
	\subfloat[Merge.]{\label{movesmerge}\includegraphics{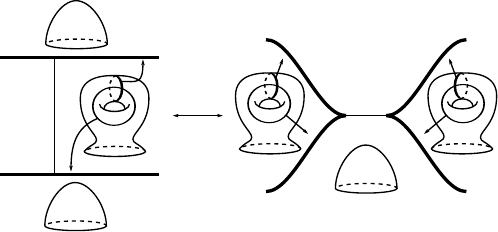}}\\ \capstart
	\subfloat[Flip.]{\label{movesflip}\includegraphics{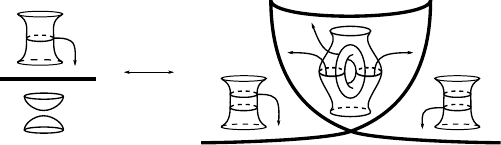}}
	\caption{\label{defmoves}Deformations of purely wrinkled fibrations.}
\end{figure}
\begin{df}The \emph{birth} move begins with a submersion and introduces a circle of critical points. It has the local model \[(t,x_1,x_2,x_3,x_4)\rightarrow(t,x_1,x_2^3+3(x_1^2-t)x_2+x_3^2-x_4^2).\]The \emph{merge} move results in the connect sum between two critical arcs, and is given by \[(t,x_1,x_2,x_3,x_4)\mapsto(t,x_1,x_2^3+3(t-x_1^2)x_2+x_3^2-x_4^2).\] The \emph{flip} move introduces a double point in the critical image, and is given by \[(t,x_1,x_2,x_3,x_4)\mapsto(t,x_1,x_2^4+x_2^2t+x_1x_2+ x_3^2-x_4^2).\]\end{df}
The conditions under which these moves and their inverses can be performed are spelled out in \cite{L1} and \cite[Section 2.4]{W1}, where these local models were introduced as moves on purely wrinkled fibrations. An additional type of homotopy, called \emph{isotopy} in \cite{L1} and \emph{2-parameter crossing} in this paper and \cite{GK1}, is one in which the critical image moves around in the base after the fashion of the Reidemeister moves. These are considered in detail in \cite{W2}.
\begin{df}For purely wrinkled fibrations $\alpha_0,\alpha_1$, a \emph{deformation} $\alpha=\alpha_t$, $t\in[0,1]$ is a homotopy realized by a sequence of births, merges, flips and 2-parameter crossings.\end{df} 

\subsection{Surface diagrams}\label{surfacediagrams}
For the purposes of this paper, a broken Lefschetz fibration is simply a smooth map from a 4-manifold $M$ to a surface $F$ (usually the sphere or the disk) whose critical locus is a collection of Lefschetz critical points and indefinite folds, and a purely wrinkled fibration is a stable map $M\to F$ whose critical locus is free of definite folds. In other words, the critical set of a purely wrinkled fibration is a union of indefinite cusps and indefinite folds. For details on stability and the critical loci of stable maps, see \cite{W2}, and perhaps more importantly \cite{L1}, which relates the results of \cite{Wa} concerning stability of families of real-valued functions to the world of purely wrinkled fibrations. By Corollary 1 of \cite{W1}, every broken Lefschetz fibration can be modified by a (possibly long) sequence of moves (which are chosen from a short list initially appearing in \cite{L1}) into a purely wrinkled fibration whose critical locus is one circle embedded by the fibration map into $S^2$; combining terminology from \cite{B1} and \cite{L1}, such a map is called a \emph{simplified purely wrinkled fibration} (in fact, the proof of that corollary implies this can be done for \emph{any} continuous $M\to S^2$, for example a constant map). Such a map $f\co M\to S^2$ has an orientable genus $g$ surface as regular fiber over one of the disks that comprise $S^2\setminus f(\crit f)$, and the regular fiber over the other disk has genus $g-1$; choosing a regular value $p$ in the higher-genus side and reference arcs from $p$ to the various indefinite fold arcs, one obtains a collection of simple closed curves in $\Sigma_g$ by recording the round vanishing cycles in a counter-clockwise direction going around the cusped critical circle. For this reason, the curves are relatively indexed by $\Z/k\Z$, where $k$ is the number of fold arcs.
\begin{df}Assuming $g\geq3$, a \emph{surface diagram} $(\Sigma_g,\Gamma)$ of $M$ is the higher-genus fiber $\Sigma_g$ of an SPWF, decorated with the relatively $\Z/k\Z$-indexed collection $\Gamma=(\gamma_1,\ldots,\gamma_k)$ of round vanishing cycles, up to orientation-preserving diffeomorphism.\end{df}
An observation by Paul Melvin is that composing with the antipodal map reverses the cyclic ordering of the vanishing cycles as measured in this way, while preserving $M$. For a fixed orientation of $M$, this reverses the preimage orientation of $\Sigma$.
\begin{rmk}
In addition to the stabilization issue in the previous remark, the requirement that $g\geq3$ (which can always be satisfied by applying the modification discussed in Section \ref{stabmove} below) is necessary for the following related reason: After using $(\Sigma_g,\Gamma)$ to form the fibration consisting of the preimage of a neighborhood of the critical image, if $g=1$ or $g=2$ there are various ways to close off the higher- or lower-genus sides with a copy of $\Sigma_{g-1}\times D^2$, according to the elements of $\pi_1(\diff(S^2))\cong\Z_2$ and $\pi_1(\diff(T^2))\cong\Z^2$, where $\diff(\Sigma)$ is the group of orientation-preserving diffeomorphisms of $\Sigma$ (see \cite{ADK,B1,H} for explicit examples). Since $\diff(\Sigma_{g})$ is simply connected for $g\geq2$, a surface diagram as defined specifies the total space of the fibration up to orientation-preserving diffeomorphism.\end{rmk}
\begin{rmk}\label{phic} The local model for the cusp requires the two round vanishing cycles to transversely intersect at a unique point in the fiber; for this reason, consecutive elements of $\Gamma$ must intersect in this way. Also, each element of $\Gamma$ must be an embedded circle in $\Sigma_g$. These two facts could be called an \emph{intersection condition} for surface diagrams, and it is natural to wonder if $(\Sigma,\Gamma)$ is a surface diagram if it satisfies the intersection condition. Unfortunately this is too much to ask: This remark discusses the conditions under which $(\Sigma,\Gamma)$ specifies a smooth 4-manifold. Just as the vanishing cycles of a Lefschetz fibration over the sphere must yield a trivial monodromy map, there is an associated monodromy map for $(\Sigma,\Gamma)$ that is trivial if and only if the pair specifies a closed 4-manifold.

Given the pair $(\Sigma,\Gamma)$ satisfying the intersection condition, construct an SPWF over the closed unit disk (like Figure \ref{spwf}) in which the vanishing cycles read $\gamma_1,\ldots,\gamma_k$ in a counter-clockwise direction. Then perform unsinking moves on all the cusps (\cite[Figure 1]{L1}) to obtain a broken Lefschetz fibration over the disk whose round vanishing cycle is $\gamma_1$ and whose Lefschetz vanishing cycles read $c_i=t_{\gamma_i}(\gamma_{i+1})$, $i\in\Z/k\Z$, so that $t_{c_i}$ is the unique positive Dehn twist sending $\gamma_i$ to $\gamma_{i+1}$ (see \cite{L1,W2} about sinking and unsinking). Let $\Sigma'$ be the surface with two marked points $(p,q)$ obtained from $\Sigma$ by replacing a tubular neighborhood of $\gamma_1$ with two disks. The mapping class $\mu=[t_{c_k}\circ\cdots\circ t_{c_1}]\in MCG(\Sigma)$ of these Dehn twists preserves the isotopy class of the unoriented circle $\gamma_1$ by construction, and so one could interpret $\mu$ as an element of $MCG(\Sigma')(p,q)$, the group of orientation-preserving diffeomorphisms of $\Sigma'$ that preserve its marked points as a set, and then forget about the marked points, taking it as an element of $MCG(\Sigma')$. Denoting the subgroup of elements that fix the unoriented circle $\gamma_1$ up to isotopy by $MCG(\Sigma)(\gamma_1)$, this amounts to a homomorphism \[\Phi_{\gamma_1}\co MCG(\Sigma)(\gamma_1)\to MCG(\Sigma')(p,q)\to MCG(\Sigma')\] that was first hinted at in \cite{ADK}, mentioned explicitly in \cite{B1} and then studied in more detail in \cite{H,Be}. It is known that the fibration of Figure \ref{spwf} can be capped off by a copy of $\Sigma'\times D^2$ to obtain an SPWF exactly when $\mu\in\ker\left(\Phi_{\gamma_1}\right)$. This offers a meaningful contrast between surface diagrams and Kirby diagrams of 4-manifolds: To check that the handlebody given by a framed and dotted link in $S^3$ closes off with 3- and 4-handles can be a difficult problem, because one must exhibit a diffeomorphism between the boundary of that handlebody and $\#_n(S^1\times S^2)$ for some $n$. The corresponding task for surface diagrams is tedious at worst and large parts of it can be done by a computer, using Alexander's method (see \cite{FM}) to check that $\mu\in\ker\left(\Phi_{\gamma_1}\right)$.
\begin{figure}\capstart
	\centering{\includegraphics{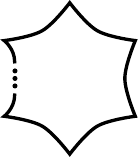}}
	\caption{A wrinkled fibration over the disk. The higher genus side of the fold is the inside of the circle.}\label{spwf}
\end{figure}\end{rmk}
Section \ref{moves} contains a list of discrete moves that may be used to modify any surface diagram. These moves preserve the diffeomorphism type of the 4-manifold specified by the surface diagram because they come from the endpoints of \emph{deformations} of the fibration map. By the main result of \cite{W1} (and also \cite{GK1} with the added value of a connected fibers result for deformations between maps with connected fibers), one may modify any homotopy $\alpha$, whose endpoints are required to be purely wrinkled fibrations, to be a deformation. These were first described in \cite{L1}, using the term \emph{deformation of purely wrinkled fibrations}. Such maps are the main focus of the paper.

To keep the distinction clear, the terms \emph{handleslide}, \emph{stabilization}, etc. will always refer to the move on surface diagrams, while the terms \emph{handleslide deformation} and \emph{stabilization deformation} will refer to the corresponding homotopies of fibration maps. Section \ref{moves} describes the so-called \emph{model deformations} that induce the moves on surface diagrams. This is important important for two reasons. First, the correspondence between model deformations and moves on surface diagrams shows that performing a move does not change the diffeomorphism type of the 4-manifold described by the diagram, because each move is shown to be the result of deforming the map that specifies the original diagram. Second, the proof of Theorem \ref{T} does not make use of the moves themselves: The argument is that there exists a way to change a deformation into a sequence of model deformations, which are linked to diagrammatic moves in Section \ref{moves}.

\begin{rmk}[Surgered surface diagrams]\label{ssdrmk}For each of the moves in Theorem \ref{T}, the highest-genus region of a base diagram momentarily becomes disconnected. Initially, reference fibers chosen over various points in the higher-genus region are decorated with identical sets of vanishing cycles up to isotopy. Depending on what goes on while there are two higher-genus components, there can be some variation in how the reference fibers over points in each component become identified when the components reunite, and one way to keep track of how these fibers are identified is to use a common reference fiber in a lower genus region. This viewpoint is used to great effect in \cite{BH,H}.

Consider in the initial SPWF a reference fiber $\Sigma_g$ over a point just to the lower-genus side of a fold arc $\gamma$, along with a short \emph{reference path} across $\gamma$ into its higher-genus side. As one traces fibers above this path, two points $p,p'$ become identified in a surgery that increases the genus of the fiber by one as in Figure \ref{fold}. Then the endpoint of this path gives a reference fiber $\Sigma_{g+1}$ to which one can add the vanishing cycles for folds bounding that higher-genus region. Pulling this picture back across $\gamma$, all the vanishing cycles descend to circles in $\Sigma_g$ except those that intersect the vanishing cycle of $\gamma$; these instead appear as \emph{vanishing arcs} whose endpoints are $p$ and $p'$. The ends of these arcs come equipped with a bijection $\beta$ between those at $p$ and those at $p'$ by the way they pair up on either side of the vanishing cycle of $\gamma$ in $\Sigma_{g+1}$. Further, $\beta$ must reverse the order of the ends going around $p$ and $p'$ in the sense that if the ends $e_1,\ldots,e_n$ are numbered clockwise around $p$, then the ends $\beta(e_1),\ldots,\beta(e_n)$ are numbered counter-clockwise going around $p'$. For this reason, the union of vanishing arcs obtained by picking a point on one vanishing arc, following it to one end $e$, continuing at $\beta(e)$ and on until returning to the chosen point will be called a vanishing cycle just like the other simple closed curves. In this way, the surface diagram can be recovered from a reference fiber in the lower-genus region, along with a chosen path into the higher-genus region.

Taking a family of such reference fibers and paths (with fixed endpoints) that travels along the critical circle of an SPWF yields a one-parameter family of such diagrams, and this paragraph describes how such a family evolves when a reference path passes a cusp (it is helpful to imagine the family of arcs in Figure \hyperlink{1b}{1b}). As the family of paths approaches a cusp, there is a vanishing arc $\nu$ whose ends correspond under $\beta$ coming from the vanishing cycle just past the cusp. This arc shrinks to a point where $p$ and $p'$ momentarily meet, so that the diagram whose path intersects the cusp itself has only one distinguished point. Passing the cusp, $p$ and $p'$ separate again. Those vanishing cycles that intersected $\nu$ form the new collection of vanishing arcs, while those ends that came from vanishing cycles disjoint from $\nu$ (and were brought together with $p$ and $p'$) remain identified, becoming arcs that pass between $p$ and $p'$, intersecting $\gamma$, which appears as a short arc running between $p$ and $p'$.\end{rmk}
 
 \subsection{Overview of the argument}\label{overview}
The first step is to show that the moves on surface diagrams correspond to particular model deformations. Section \ref{moves} describes the moves and supplies these deformations along with arguments linking them to the moves. Importantly, each model deformation (call a given one $\alpha$) has a specific pattern for three aspects of its critical locus:\begin{itemize}\item the Morse function $T$ given by projecting $\crit\alpha$ to the $t$ parameter,\item the restriction of $T$ to the cusp locus of $\alpha$, \item the stratified immersion $\crit\alpha\to[0,1]\times S^2$.\end{itemize} The map $\alpha|_{\crit\alpha}$ is a \emph{stratified} immersion because it has a nonempty critical locus consisting of the cusp and swallowtail points, while its restriction to each of these strata is an immersion. 

The strategy to prove Theorem \ref{T} stems from the observation that if the three aspects of a deformation above are shared with that of a model deformation, then it is a model deformation. The strategy is to modify $\alpha\co M_{[0,1]}\to S^2_{[0,1]}$ until there is a partition of $[0,1]$ for which $\alpha$ follows one of the four patterns on each member.

After the sections that outline the model deformations, Sections \ref{notation}--\ref{switching} collect notation and some tools that appear in the proof of the main Theorem \ref{T}. The required modification of $\alpha$ appears in Sections \ref{mod} and Section \ref{immer}. 

Section \ref{mod} addresses the embedded surface $\crit\alpha$ itself in a series of lemmas to the effect that one may assume all fibers are connected with genus at least two, $\crit\alpha$ is connected, has at most two components at any value of $t$, and that $T$ has no canceling Morse critical points. Trading precision for intuition, one could say that applying these lemmas results in a one-parameter family of maps whose critical circle remains connected, except sometimes it splits in two for a short interval in $t$. Going further, the last of these lemmas, Lemma \ref{genus}, uses tools from Section \ref{splice} to modify $\crit\alpha$ around each interval where the circle splits to distinguish each splitting as coming from a multislide or a shift deformation.
\begin{figure}\capstart
	\labellist
	\small\hair 2pt
	\pinlabel handleslide at 82 8
	\pinlabel stabilization at 155 8
	\pinlabel multislide at 245.5 8
	\pinlabel shift at 324 8
	\endlabellist
	\centering{\includegraphics[width=\linewidth]{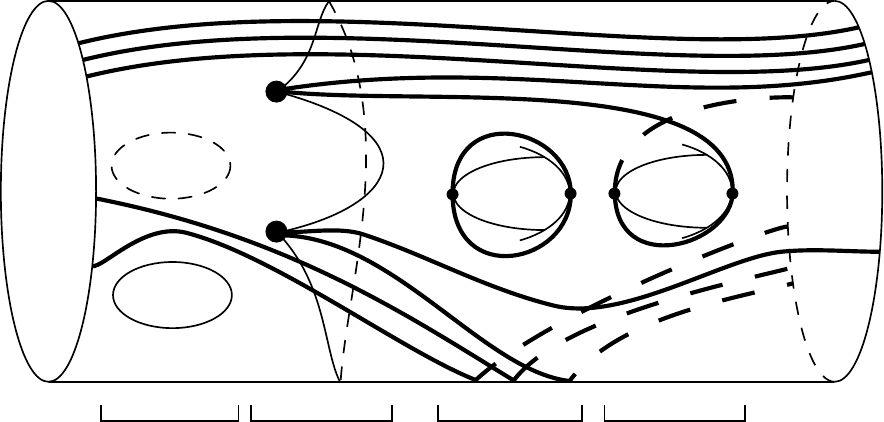}}
	\caption{The decorated critical surface of a deformation. The decorations here correspond to the four moves of Theorem \ref{T}.}\label{clean}
\end{figure}
Assuming $\crit\alpha$ has undergone the above modifications, Section \ref{immer} modifies the stratified immersion $\crit\alpha\to[0,1]\times S^2$. Decorating the critical surface with so-called \emph{immersion arcs}, there are several cases one must examine in order to ensure that they precisely follow the prescriptions given by the model deformations. What remains is the required deformation. To give some idea of how these decorations appear, Figure \ref{clean} depicts $\crit\alpha$, where $\alpha$ is a sequence of the model deformations for handleslide, stabilization, multislide, and finally shift (each is underlined below the figure). The $t$ coordinate increases to the right, and the critical points of $T$ are marked out by small dots while swallowtail points (where flips occur) are large dots. The bold arcs are swept out by cusp points while the other arcs are mapped to each other in pairs by $\alpha$. All these phenomena are discussed in more detail below. One quirk of these figures is that each cylinder is cut out of $\crit\alpha$ at an angle and drawn with a slight bend to eliminate artifacts of parallax: It looks like a straight right cylinder on the paper, yet each pair of points in each slice $\alpha_t$ appears vertically aligned. This makes it easier to identify which arcs have the same image in $S^2_I$ using the \emph{simultaneity condition} defined below.

\begin{figure}\capstart
	\centering{\includegraphics[width=\linewidth]{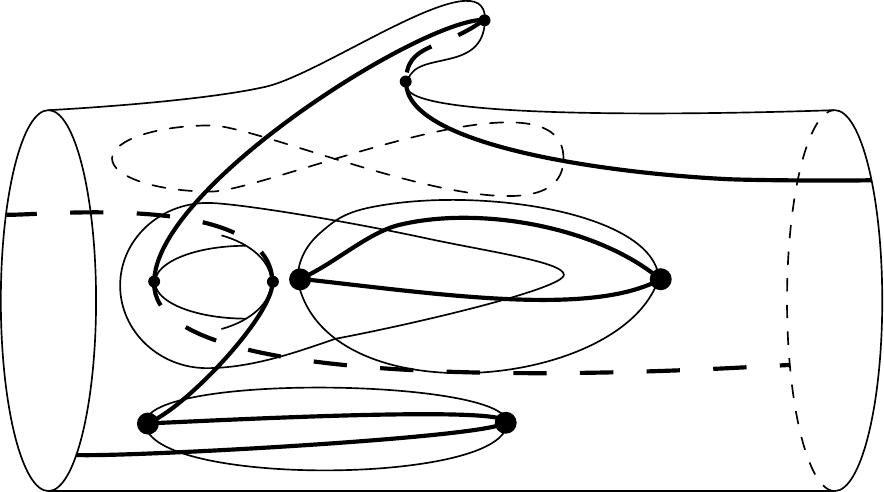}}
	\caption{A sampling of what can go wrong in a deformation.}\label{dirty}
\end{figure}
Figure \ref{dirty} exhibits some of the behavior that is ruled out for sequences of model deformations. Two highlights are the existence of a canceling pair of $T$-critical points at the top (Lemma \ref{cancelmorsecp}) and an intersection triple of immersion arcs. In the absence of cusps, such an intersection appears as a Reidemeister-III fold crossing in $S^2$, and there are no instances of such a move in the model deformations. Another issue in Figure \ref{dirty} is that none of the swallowtail points occur as part of a stabilization as in Figure \ref{clean}. The remedy for such a deformation would be to apply Lemma \ref{cancelmorsecp} for the canceling critical points (see Figure \ref{bi} for the heart of the argument), and to include the swallowtails into stabilization deformations that likely reverse later in the deformation. As for the intersection triple, Section \ref{immer} contains lemmas that allow one to include the arcs into handleslide or stabilization deformations. One of the more subtle parts of the paper is concerned with the immersion arc that encircles the pair of index one critical points of $T$: Section \ref{unlink} is dedicated to eliminating this phenomenon.
\subsubsection*{Acknowledgments}\ The author would like to thank Denis Auroux, David Gay, Kenta Hayano and Rob Kirby for their exceedingly helpful input during the preparation of this work, which was partly supported by NSF grant DMS-0635607. The author would also like to thank the reviewer(s) for their extraordinary fortitude.  

\section{Moves and model deformations}\label{moves}
\subsection{Handleslide}
\subsubsection{Handleslide move}\label{hsmove}
The first move comes from an application of Lemma 3.8 and Theorem 3.9 of \cite{H}. To perform a handleslide on $(\Sigma,\Gamma)$, the first step is to choose a pair of disjoint vanishing cycles, say $\gamma_1$ and $\gamma_n$, such that the result of surgery on $\gamma_1\cup\gamma_n$ (replacing a neighborhood of each of $\gamma_1$ and $\gamma_n$ with a pair of disks) is a connected surface of genus at least 2. Next, choose a product of three Dehn twists $\phi=t_{\gamma_1}^{-1}\circ t_{\gamma_n}^{-1}\circ t_{c}\in\diff\Sigma$, where $c$ is constructed immediately below. Then the new surface diagram is $(\Sigma,\Gamma')$, where \[\Gamma'=(\gamma_1,\phi(\gamma_2),\ldots,\phi(\gamma_{n-1}),\gamma_n,\ldots,\gamma_k).\]Since the move can only affect $\gamma_2,\ldots,\gamma_{n-1}$, these are called the \emph{nonstationary set} and the rest comprise the \emph{stationary set}. These terms will also be useful for the multislide and shift moves.

The curve $c$ comes from the following construction. First, replace neighborhoods of $\gamma_1$ and $\gamma_n$ with pairs of marked disks $(D_i,p_i),\ (D_i',p_i')$, $i\in\{1,n\}$, and let \[h\co\Sigma_{g-1}\setminus\{p_1,p_1',p_n,p_n'\}\to\Sigma_g\setminus(\gamma_1\cup\gamma_n)\] be the resulting diffeomorphism relating the surfaces on the left and right sides of Figure \ref{c} (minus their respective subsets). Then connect one of $\{p_1,p_1'\}$ to one of $\{p_n,p_n'\}$ by any embedded arc $\tilde{c}$ (an example of such an arc is on the left side of Figure \ref{c}) and let $N$ be the closure of a neighborhood of $\tilde{c}$ containing the disks that contain the endpoints of $\tilde{c}$. The boundary of $N$ appears in  the middle of Figure \ref{c}. Then $c=h(\partial N)$ is a simple closed curve in $\Sigma_g\setminus(\gamma_1\cup\gamma_n)$.
\begin{figure}\capstart%
	\centering\capstart	
	\subfloat[Constructing a curve $c$ for the handleslide move. In the right side, $c$ is the middle curve, while $\gamma_1$ and $\gamma_n$ are interchangeably the other two.]{\label{c}\includegraphics[width=\linewidth]{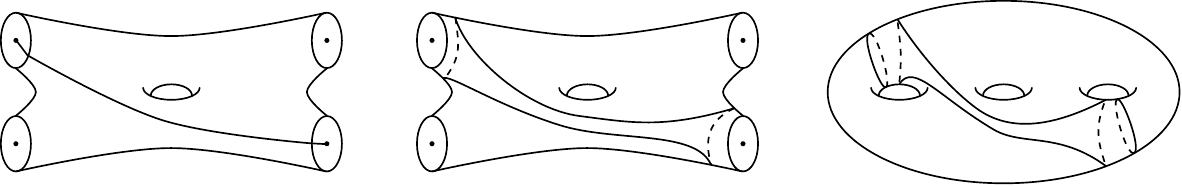}}	\\
	\subfloat[An example of how the choices above affect the curve on the left by a sequence of connect sums with $\gamma_1$ and $\gamma_n$.]{\label{hsex}\includegraphics[width=\linewidth]{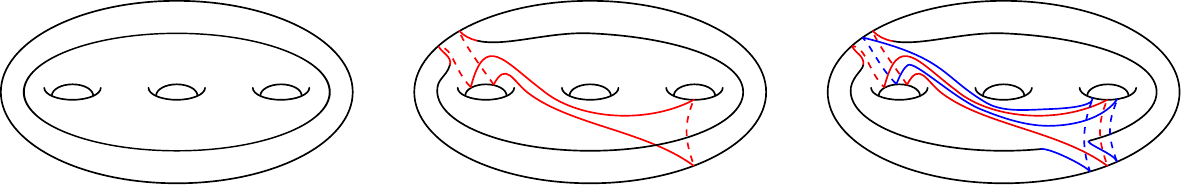}}
	\caption{}
\end{figure}

Section \ref{hsmove} ends with a short digression on why this is called a handleslide move. In a Heegaard diagram for a 3-manifold, the attaching circles can be considered vanishing cycles for critical points of an associated Morse function; see e.g. \cite{L2} for an application of this viewpoint. The deformation underlying the handleslide move is similar to that of the 3-dimensional handleslide in the sense that both can be seen as inducing push maps in surgered diagrams: In the 3-dimensional case, one chooses whether the handleslide occurs among $\alpha$ or $\beta$ circles, or in the language of this paper which circles form the nonstationary set (suppose it is the $\alpha$ circles). Then one replaces, say, $\alpha_1$ with two marked disks (that is, one moves the reference fiber just past the critical point that gave $\alpha_1$) and applies a push map to the other $\alpha$ circles. Each time one of these circles crosses a marked point, it is recorded in the Heegaard diagram as a handleslide over $\alpha_1$. Indeed, the map $\phi$ above is constructed as such a lift of a push map in the proof of \cite[Lemma 3.8]{H}. As an example, Figure \ref{hsex} shows how the nonstationary curve $\gamma_i$ undergoes a sequence of connect sums with parallel copies of $\gamma_1$ or $\gamma_n$ according to the choices made in Figure \ref{c}. Another way to check that any handleslide modifies a nonstationary curve by a sequence of connect sums with $\gamma_1$ and $\gamma_n$ is to perform the relevant Dehn twists on short arcs representing intersections of $h^{-1}(\gamma_i)$ with $\tilde{c}$ (each gives two intersections between $\gamma_i$ and $c$), or of $\gamma_i$ with $\gamma_1$ or $\gamma_n$ (each of which gives one intersection of $\gamma_i$ with $c$ and one intersection of $\gamma_i$ with $\gamma_1$ or $\gamma_n$).
 
 \subsubsection{Handleslide deformation}
The model deformation for the handleslide move is the \emph{$R_2$-move} studied in \cite{H}.
\begin{df}\label{hsdefdet}A \emph{local handleslide deformation} consists of a consecutive pair of Reidemeister-II fold crossings. In the first crossing, the higher-genus side of a fold arc approaches the higher-genus side of another, and two intersections form. In the second Reidemeister-II fold crossing, the newly-formed lune of regular values contracts, canceling the two intersections. A \emph{handleslide deformation} is a local handleslide deformation beginning with an SPWF as in Figure \ref{hsdef}, throughout which all fibers are connected and have genus at least 2.\end{df} 

A deformation such as the one in Figure \ref{hsdef} has disconnected fibers in the central lune if $\Sigma\setminus(\gamma_1\cup\gamma_n)$ is disconnected. According to \cite[Theorem 1.2]{GK1}, the initial deformation connecting $\alpha_0$ to $\alpha_1$ can be assumed to have connected fibers. Going further, Lemma \ref{genuslem} asserts there is a deformation whose fibers all have genus at least 2. The modifications in Section \ref{proof} preserve this condition.
\begin{figure}\capstart
	\labellist
	\small\hair 2pt
	\pinlabel $n$ at 16 37
	\pinlabel 1 at 56 37
	\pinlabel 1 at 88 48
	\pinlabel 2 at 88 26
	\endlabellist
	\centering{\includegraphics{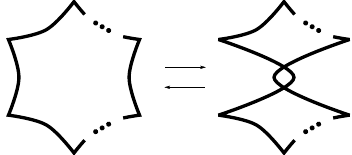}}
	\caption{Base diagrams for a handleslide deformation. Steps 1 and 2 are both Reidemeister-II fold crossings.}\label{hsdef}
\end{figure}
In \cite[Theorem 3.8]{H}, Hayano did essentially all of the work to prove that any handleslide deformation induces a handleslide move in a surface diagram. In \cite[Theorem 3.9]{H} he proved that any handleslide move is induced by some handleslide deformation. 

\begin{figure}\capstart	
\labellist
\small\hair 2pt
\pinlabel $a$ at 30 62
\pinlabel $b$ at 30 7
\pinlabel $p_1$ at 13 35
\pinlabel $p_2$ at 61 35
\pinlabel $a$ at 167 62
\pinlabel $b$ at 167 7
\pinlabel $p_1$ at 150 35
\pinlabel $p_2$ at 198 35
\pinlabel 1 at 92 45
\pinlabel 2 at 92 23
\endlabellist\centering{\includegraphics{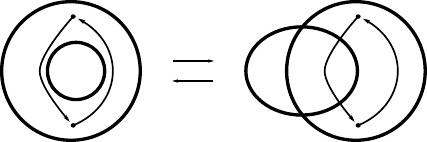}}\caption{A reproduction of \cite[Figure 5]{H}, in which the circle $\gamma$ is given as a concatenation of two paths $p_1$ and $p_2$. The leftmost lune after step 1 corresponds to the central lune in Figure \ref{hsdef}.}\label{hslem}
\end{figure}

The fibration that appears in \cite[Figure 5]{H} is not an SPWF, so it remains to clarify why Hayano's argument applies to handleslide deformations. In Figure \ref{hslem}, the path $p_2$ and some chosen horizontal distribution specifies a diffeomorphism $\phi_2\co F_b\to F_a$ between the fibers above the endpoints $a$ and $b$. This diffeomorphism remains unchanged through Hayano's homotopy because the chosen horizontal distribution over $p_2$ does not change. On the other hand, Hayano shows that his homotopy changes the monodromy along $p_1\ast p_2$ precisely by composition with a map $\phi$ as in Section \ref{hsmove}. For this reason, the map $\phi_1\co F_a\to F_b$ becomes $\phi(\phi_1)$ as a result of his homotopy. Finally, in Figure \ref{hsdef}, $p_1$ gives the identification between higher-genus fibers in regions above and below the central lune in the right side of the figure, so the vanishing cycles for fold arcs contained in the upper region, as they appear in $F_a$, change by application of a map $\phi$ after a handleslide deformation.
\begin{rmk}Hayano's construction of the monodromy $\varphi_\gamma$ in \cite[Lemma 3.8]{H} as a product of three Dehn twists for a non-separating pair $(\gamma_1,\gamma_n)$ is the same as what appears in \cite[Lemma 3.7]{H} for pair that separates the fiber into surfaces of genus at least 2, so there is an immediately analogous version of handleslide for separating pairs. On the other hand, his version of the move is not necessary for the uniqueness Theorem \ref{T}.\end{rmk}

 \subsection{Stabilization}
 \subsubsection{Stabilization move}\label{stabmove}
\begin{figure}\capstart
	\labellist
	\small\hair 2pt
	\pinlabel $\gamma_k$ at 18 15
	\pinlabel $\gamma_{k+3}'$ at 70 50
	\pinlabel $\gamma_{k+2}'$ at 104 77
	\pinlabel $\gamma_k'$ at 96 5
	\pinlabel $\gamma_{k+4}'$ at 117 5
	\endlabellist
	\centering{\includegraphics[width=.5\linewidth]{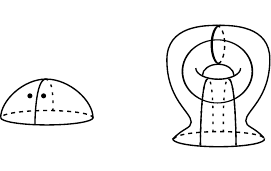}}
	\caption{Stabilizing a surface diagram near a point in $\gamma_k$. The circle $\gamma_{k+1}'$ has a more subtle description.}\label{stabimg}
\end{figure} 
This move comes from a lightly disguised generalization of a homotopy of Baykur that first appeared in Figures 5 of \cite{B2} (where it is called ``flip and slip") and 11 of \cite{L1}. Hayano in \cite{H} gave a careful description of this move; most of how it affects a surface diagram is shown in Figure \ref{stabimg}. On the left of Figure \ref{stabimg} is a small neighborhood of a point in a vanishing cycle, for which the numbering has been chosen so that it is the ``last" vanishing cycle of $\Gamma=(\gamma_1,\ldots,\gamma_k)$. According to Theorem 6.4 of \cite{H}, the stabilization deformation results in a diagram $(\Sigma_{g+1},\Gamma')$, with $\Sigma_{g+1}$ and the first $k$ entries of $\Gamma'$ obtained from $(\Sigma_g,\Gamma)$ by replacing a pair of small disks as shown at the left with a cylinder. Three of the last four entries of $\Gamma'=(\gamma_1',\ldots,\gamma_k',\gamma_{k+1}',\ldots,\gamma_{k+4}')$ are as in the figure, with $\gamma_{k+4}'$ parallel to $\gamma_k'$. The remaining circle $\gamma_{k+1}$ is given by $\mu'\circ\phi^{-1}(\gamma_{k+3}')$, where $\phi\in MCG(\Sigma_{g+1})$ is any element such that $\Phi_{\gamma_k}(\phi)=id$ and $\Phi_{\gamma_k+2}(\phi)=\mu$, and \[\mu'=t_{t_{\gamma_k'}(\gamma_1)}\circ t_{t_{\gamma_{k-1}'}(\gamma_k')}\circ\cdots\circ t_{t_{\gamma_1'}(\gamma_2')}.\] See Remark \ref{phic} for a discussion of $\Phi$ and $\mu$. By comparing the requirements on $\phi$ with the definition of the handleslide move, it is clear that once an element $\phi$ is found, that is, once an actual stabilization is performed, all the other possible stabilizations on the same vanishing cycle differ by handleslide moves. The problem of how to systematically produce such an element given $\Gamma$ is largely open, though numerous examples appear in \cite{H}.
 \begin{figure}\capstart
 	\labellist
 	\small\hair 2pt
 	\pinlabel $g$ at 47 52
 	\pinlabel $g-1$ at 75 96
 	\pinlabel $g+1$ at 255 52
 	\pinlabel $g$ at 285 52
 	\endlabellist
 	\centering{\includegraphics{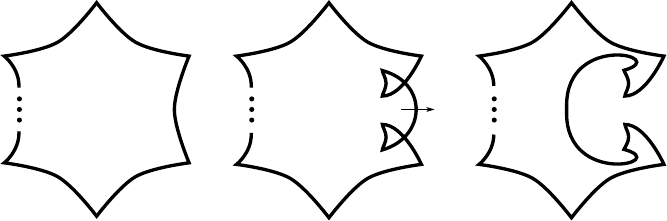}}
 	\caption{The stabilization deformation involves two flipping moves that occur on the same fold arc, eventually followed by an $R_2$ deformation that cancels the resulting intersections. The arrow indicates a fold arc that passes across the back of the sphere, completely sweeping over the lower-genus side.}\label{stabdef}
 \end{figure}
 \subsubsection{Stabilization deformation}\label{stabdefpf}
Figure \ref{stabdef} is a sequence of base diagrams for the stabilization deformation, corresponding to the move from Section \ref{stabmove}, studied in detail in \cite{H}. Here, two \emph{flipping moves} from \cite{L1} have been applied to the same fold arc, then the part of the fold arc between the two loops sweeps across the lower-genus side of the fibration to cancel the intersection points, crossing each cusp point (other than the four that appeared in the flips) exactly once. The part of the deformation following the flips is what will be called an \emph{$R_2$ deformation}, which is defined here using the language of \cite[Section 2, item (4b)]{GK1}:

\begin{df}\label{R2defn}An $R_2$ deformation is a \emph{Reidemeister-II fold crossing} (or its reverse), slightly generalized to allow cusps to cross into or out of the boundary of the central lune, with each entry or exit as in Figure \ref{tinypair}. An $R_2$ deformation is also allowed to begin or end with a cusp-fold crossing.\end{df}

As depicted in \cite[Figure 5]{H}, the first and second fold crossings of what that paper names an \emph{$R_2$-move} are both examples of what this paper calls an $R_2$ deformation.
\begin{figure}\capstart
	\centering{\includegraphics{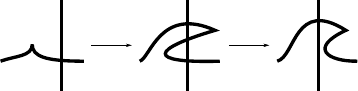}}
	\caption{Cusps might cross fold crossings during an $R_2$ deformation by a pair of 2-parameter crossings of this type, or its reverse. Here the depicted critical image is contained in a small disk that contains no other critical values.}\label{tinypair}
\end{figure}
Like in Figure \ref{clean}, depictions of the deformation in Figure \ref{tinypair} will be suppressed from pictures of the critical set. They should be understood to occur at any crossing between a cusp arc and an immersion arc.  

\begin{prop}\label{stabexists} Given any fold arc in an SPWF, there is a stabilization deformation whose initial flips occur on that fold arc.\end{prop} \begin{proof}First perform two flips on the chosen fold arc. By repeated application of \cite[Propositions 2.5(2) and 2.7]{W2}, it is possible to repeatedly apply deformations as in Figure \ref{tinypair} to move all cusps into the two loops introduced by the flips. At this point it is possible to cancel the two fold crossings by \cite[Proposition 2.5(3)]{W2}.\end{proof}

One special case of stabilization is available to surface diagrams that come from surface bundles over the sphere, that is, blank surface diagrams. Performing a birth move in this case increases the genus of the surface diagram by one and introduces two simple closed curves whose only requirement is that they intersect at a unique point. In \cite{BH}, it is part of the definition of \emph{surface diagram} that there are at least two circles.

\begin{ex}\label{s2s2ex}
Now that the term \emph{stabilization} has appeared, it makes sense to give the following example of a surface diagram. Figure \ref{s2s2} is a surface diagram for $S^2\times S^2$, with the circles numbered in the order they would appear going counter-clockwise around the image of a 10-cusped critical circle. The SPWF underlying this map is homotopic to the sphere bundle over the sphere that projects onto either of the factors of $S^2\times S^2$. Beginning with that map, perform a birth move and perform two stabilization deformations to obtain Figure \ref{s2s2}. It is curious that the other $S^2$ bundle over $S^2$ with total space diffeomorphic to $\CP\#\CPb$ also begins with a base diagram that has a sphere fiber with no vanishing cycles, and that one may perform the ``same'' three moves; however, a surface diagram specifies the total space up to diffeomorphism, so the resulting vanishing cycles must be different from those in Figure \ref{s2s2}. For this reason, stabilization (as an operation that takes $(\Sigma,\Gamma)$ as input) is not well-defined if the genus of $\Sigma$ is less than 2.
	
Hayano \cite{H} was able to sufficiently understand this phenomenon to convert the fibration of $S^4$ originally appearing in \cite{ADK} (whose fibers are genus 0 and 1, with one cusp-free fold circle between) into one with arbitrarily high genus to yield a family of surface diagrams of $S^4$. Starting with one of these diagrams, one could apply \cite[Lemma 5.1]{Be} to get a diagram of $\CP\#\CPb$.
	\begin{figure}\capstart
		\labellist
		\small\hair 2pt
		\pinlabel 1 at 68 13
		\pinlabel {2, 10} at 105 33
		\pinlabel {3, 9} at 142 13
		\pinlabel {4, 8} at 178 33
		\pinlabel {5, 7} at 213 13
		\pinlabel 6 at 250 33
		\endlabellist
		\centering{\includegraphics[width=0.8\linewidth]{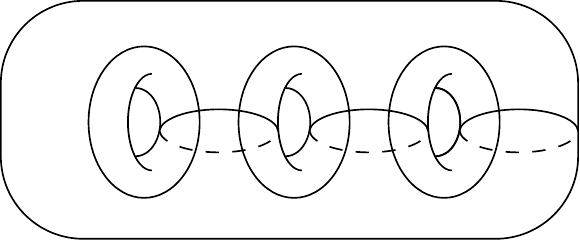}}
		\caption{A surface diagram for $S^2\times S^2$.}\label{s2s2}
	\end{figure}
\end{ex}
 
 \subsection{Multislide}\label{msmove}
 \subsubsection{Multislide move}
\begin{figure}\capstart%
	\centering\capstart	
	\subfloat[\ ]{\label{msfig1}\includegraphics{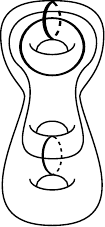}}	\qquad
	\subfloat[\ ]{\label{msfig2}\includegraphics{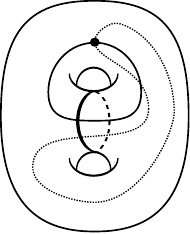}} \qquad
	\subfloat[\ ]{\label{msfig3}\includegraphics{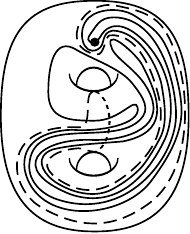}}
	\caption{An example of multislide (omitting the last step, a connect sum with $T^2$).}
\end{figure}
Independently found by Denis Auroux and Rob Kirby, then carefully explained with proof in \cite{BH}, this move comes from a deformation in which the critical locus becomes momentarily disconnected. To perform the multislide, one first finds a non-consecutive pair of vanishing cycles that intersect transversely at a unique point in $\Sigma$, say $\gamma_1$ and $\gamma_n$ (the two bold curves in Figure \ref{msfig1}). One also chooses one of the subsets $\{\gamma_2,\ldots,\gamma_{n-1}\}$ or $\{\gamma_{n+1},\ldots,\gamma_k\}$ on which to perform the move, calling it the nonstationary set (two of which appear as the other two curves in Figure \ref{msfig1}). A small tubular neighborhood of the chosen pair $\gamma_1,\gamma_n\subset\Sigma_g$ is a punctured torus $T$. The multislide proceeds as a one-parameter family of diffeomorphisms of $(\Sigma_{g-1},\gamma_2,\ldots,\gamma_{n-1})$ in which $T$ travels along an arbitrary circle $\alpha$ based at a point $p\in\partial T$, returning to where it started and dragging the nonstationary set along with it (but leaving all the other vanishing cycles $\gamma_n,\ldots,\gamma_1$ unchanged). In other words, if one were to replace $T$ with the point $p$ as in Figure \ref{msfig2}, the modification of the nonstationary set would be to apply the point-pushing map of the Birman exact sequence of mapping class groups along the circle $\alpha$ (pushing along the dotted circle in Figure \ref{msfig2} results in Figure \ref{msfig3}), then replace $T$ so that the vanishing cycles glue back together in the obvious way. For details on the point push map, see for example section 4.2 of \cite{FM}. A choice of framing also allows the nonstationary set to be affected by a power of a $\Delta$-twist, which is in the language of \cite{BH} denotes the square root of a positive or negative Dehn twist along $\partial T$; see Section 6.2 and the proof of Lemma 5.2 of that paper. To explain the terminology, it is as if a handleslide has been performed in which a collection of vanishing cycles slides over a disk spanning $\partial T$ (followed by half Dehn twists along $\partial T$). $\Delta$
 
 \subsubsection{Multislide deformation}\label{msdefpf}
The multislide deformation, corresponding to the move in Section \ref{msmove}, is depicted in Figure \ref{msdef}. To begin, choose a pair of fold arcs whose vanishing cycles happen to intersect transversely at a unique point in the fiber (they do not have to be consecutive). In the left side of Figure \ref{msdef}, a vertical arc joins the chosen pair, signifying a \emph{fold merge} that results in the right side of the figure. 
\begin{figure}\capstart
	\labellist
	\small\hair 2pt
	\pinlabel 1 at 102 45
	\pinlabel 2 at 102 23
	\pinlabel $p_{fm}$ at 50 34
	\pinlabel $p_{cm}$ at 165 28
	\endlabellist
	\centering{\includegraphics{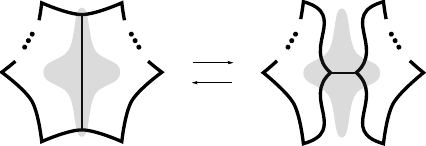}}
	\caption{The multislide deformation consists of a fold merge followed by a cusp merge. The local version is supported on a tubular neighborhood of the union of the fold merge path $p_{fm}$ and the cusp merge path $p_{cm}$, the images of which are shown, with the image of the support shaded.}\label{msdef}
\end{figure}\begin{rmk}This move is called a \emph{merge} in \cite{L1} and \cite{W1}, and an \emph{unmerge} in \cite{GK1}; the terms \emph{fold merge} for when two folds meet and \emph{cusp merge} for when two cusps meet appeared in \cite{BH}, will be used in this paper, and seems likely to enjoy future consensus.\end{rmk}The inside of each of the two circles now has $\Sigma$ as its regular fiber, decorated with vanishing cycles obtained from $\Gamma$ by deleting those whose fold arcs became contained in the other circle as a result of the fold merge (this is where the partition $\Gamma\setminus\{\gamma_1,\gamma_n\}=\{\gamma_2,\ldots,\gamma_{n-1}\}\cup\{\gamma_{n+1},\ldots,\gamma_k\}$ comes from). Before any other moves such as cusp-fold crossings, flips, or $R_2$ deformations occur, the deformation concludes with a cusp merge in which the two circles reunite along the two newly-formed cusps.
\begin{df}\label{mspairdf} A \emph{local multislide deformation} is a generalization of a multislide deformation satisfying the following two conditions: \begin{enumerate}\item[(i)]\hypertarget{mscon[i]}There is an embedding of a fibration structure of the type depicted over the shaded $D^2_{[0,1]}$ in Figure \ref{msdef} into the fibration structure over $S^2_{[0,1]}$ given by $\alpha$.\item[(ii)]\hypertarget{mscon[ii]} Let $\iota\subset\crit\alpha$ denote the collection of points where $\alpha|_{\crit\alpha}$ fails to be injective. As suggested in Figure \ref{msdef}, $\alpha(\iota)$ is bounded away from the one-parameter family of shaded disks in $S^2_{[0,1]}$ depicted in Figure \ref{msdef}.
\end{enumerate}The merge points of a local multislide deformation form a \emph{multislide pair}.\end{df}
To explain the corresponding modification of surface diagrams, consider the deformation as a map $[0,1]\times M\to[0,1]\times S^2$ that restricts to the identity map in the first factors. The critical image is a properly embedded twice-punctured torus in $[0,1]\times S^2$. If one chooses two reference points in the left side of the figure such that one reference point splits into each circle, initially their surface diagrams are isotopic. However, the identification between them is modified by the time the two circles rejoin: note that the cusps that form and then disappear trace out a circle in $[0,1]\times M$. Projecting out the homotopy parameter yields a circle in $M$, and without loss of generality one of the pair of cusps, say the right, remains stationary throughout its life, while the other traces out the circle as the homotopy progresses (the resulting surface diagram will then be the one obtained using a reference point that stays in the right circle through the deformation). To say it a different way, the horizontal arc in the middle of Figure \ref{msdef} can be taken as the image of the path to be taken by the left cusp. This so-called \emph{cusp merge path} (less precisely referred to as a \emph{joining curve} in \cite{Lev,W2}, $\alpha$ in \cite{L1}, and elsewhere unnamed) is framed, is everywhere transverse to the fiber, and has the two cusps as its endpoints. 

\begin{rmk}\label{mergepath}All cusp merge paths will be denoted $p_{cm}$ in this paper ($p_{fm}$ for fold merge paths), and note that once a cusp merge path is defined, its sub-paths at later $t$ values as the two cusps approach each other also serve as cusp merge paths, and similar for fold merge paths, so that really a cusp merge or fold merge involves a one-parameter family of merge paths $\left[p_{cm}\right]_t$, though such pedantry is only necessary in a few parts of the paper.\end{rmk}

At the moment $t_0$ when the two cusps form, the endpoints of $(p_{cm})_{t_0}$ coincide with the merge point, so $(p_{cm})_{t_0}$ is a circle in $M$. The fibers containing points on the interior of $(p_{cm})_t$ serve as $\Sigma'$, with the disk mentioned above coming from a fiberwise neighborhood of $(p_{cm})_t$. The circle $(p_{cm})_{t_0}$ projects to a circle in a fiber $\Sigma'$ along which this disk travels, inducing the above-mentioned isotopy of the vanishing cycles in the left circle, which in turn induces an element of the mapping class group of $\Sigma$ that only applies to those vanishing cycles coming from the left circle.

Choosing the left cusp instead of the right as the stationary cusp (and the left circle's reference point as the ending reference point) results in performing exactly the reverse modification to $\gamma_{n+1},\ldots,\gamma_k$, moving along the circular path in the opposite direction and applying oppositely oriented Dehn twists along the boundary of the disk. For completeness, it should be mentioned that the surgered surface diagram explanation for the multislide involves a reference path connecting the two newly-formed cusps throughout their existence; the details are left to the reader.

 \subsection{Shift}\label{shiftmove}
This move could be seen to embody the possible variations on the ordering of the vanishing cycles in a surface diagram, though it does not simply re-index the elements of $\Gamma$. It was studied carefully in \cite{BH}, and for the reader's convenience a summary follows. Because of the wording of Proposition \ref{shiftprop}, it is convenient to first outline the shift deformation. As with the multislide, one identifies a pair $\gamma_l, \gamma_k\in\{\gamma_1,\ldots,\gamma_k\}$, along with a nonstationary subset $\{\gamma_{l+1},\ldots,\gamma_{k-1}\}$.
\begin{figure}\capstart
	\labellist
	\small\hair 2pt
	\pinlabel $\varphi_1$ at 17 67
	\pinlabel $\varphi_k$ at 50 35
	\pinlabel $p_{fm}$ at 64 21
	\pinlabel $\varphi_l$ at 50 6
	\pinlabel $\chi_2'$ at 168 53
	\pinlabel $\tilde\varphi_k$ at 123 32
	\pinlabel $p_{cm}$ at 160 26
	\pinlabel $\chi_2$ at 168 7
	\pinlabel $\chi_1$ at 183 7
	\endlabellist
	\centering{\includegraphics{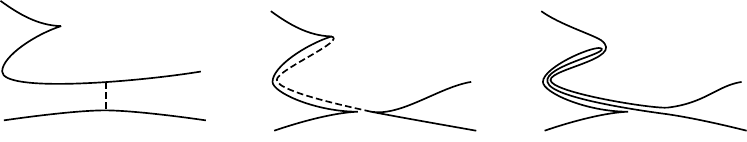}}
	\caption{The shift deformation consists of a fold merge followed by a cusp merge. The dotted arc is the image of a cusp merge path $p_{cm}$ that is closely parallel to $\tilde\varphi_k$ in $M$.}\label{shiftdef}
\end{figure}

In base diagrams, the shift deformation is similar to that of the multislide in Figure \ref{msdef}, in which a fold merge occurs between the fold arcs $\varphi_l$ and $\varphi_k$ corresponding to $\gamma_l$ and $\gamma_k$, but instead of reuniting the two circles by a cusp merge between the two newly-formed cusps, the circles reunite by a cusp merge between one of the newly-formed cusps ($\chi_1$ in Figure \ref{shiftdef}) and the cusp between $\varphi_1$ and $\varphi_k$. Such a deformation is called a \emph{generalized shift deformation} in \cite{BH}.
\begin{prop}[\cite{BH}, Proposition 6.3]\label{shiftprop}Let $w\co X\to D^2$ be a simple wrinkled fibration with surface diagram $(\Sigma;\gamma_1,\ldots,\gamma_k)$ such that for some $1<l<k$ the curves $\gamma_l$ and $\gamma_k$ intersect at one point.\begin{enumerate}\item If $w'\co X\to D^2$ is obtained from $w$ by a generalized shift deformation whose initial fold merge is applied around the two fold arcs with vanishing cycles $\gamma_l$ and $\gamma_k$, then the surface diagram of $w'$ is given by\begin{equation}\left(\Sigma,\gamma_1,\ldots,\gamma_l,\gamma_k,\chi(\gamma_{l+1}),\ldots,\chi(\gamma_{k-1})\right),\label{eq:shiftdiag}\end{equation}where $\chi\in\Mod(\Sigma)$ is a mapping class that satisfies the following properties:\begin{enumerate}\item $\chi t^{-1}_{c_k}t^{-1}_{c_l}\in\Ker\Phi_{c_k}$,\item $\chi t^{-1}_{c_1}t^{-1}_{c_l}\in\Ker\Phi_{c_1}$.\end{enumerate}\item For any $\chi\in\Mod(\Sigma)$ satisfying the conditions (a) and (b), there exists a generalized shift deformation from $w$ to a simple wrinkled fibration whose surface diagram is given by Expression \ref{eq:shiftdiag}.\end{enumerate}\end{prop} 

The above modification simplifies somewhat with the following assumption about the deformation. Suppose that, once the initial fold merge occurs, there is a path traced out by one of the resulting cusps, with the other cusp remaining stationary in $M$. Slightly after the initial fold merge, the endpoint of this path (whose interior consists of regular points at all times) travels along the fold arc $\varphi_k$ to the cusp between $\varphi_k$ and $\varphi_1$, then serves as the cusp merge path $p_{cm}\co[0,1]\to M_{1/2}$ by which the two critical circles reunite. In other words, there is a smooth embedded disk in $M$ whose boundary $c$ is naturally partitioned into three arcs as follows: two arcs are cusp merge paths between the cusps involved in the shift, and the third edge consists of fold points in $\varphi_k$. In \cite[Remark 6.5]{BH} it is explained that there exist precisely two such disks, up to isotopy relative to the boundary, yielding a possibly distinct pair of shift moves on surface diagrams, and Behrens and Hayano describe how to construct the mapping class $\chi$ in this case. For such surface diagram moves or deformations this paper and \cite{BH} use the term \emph{shift} or \emph{shift deformation}. The deformation $M_{[0,1]}\to S^2_{[0,1]}$ depicted in Figure \ref{shiftdef} is supported in an open ball containing the cusp between $\varphi_1$ and $\varphi_k$ and an open arc from $\varphi_l$, and restricts to a product map $\alpha_0\times\id_{[0,1]}$ on the rest of the fibration. The following definition singles out the merge points of such a deformation.

\begin{df}\label{shiftpairdf} A \emph{local shift deformation} is a generalization of a shift deformation satisfying the following two conditions: \begin{enumerate}\item[(i)]\hypertarget{shiftcon[i]}There is an embedding of the fibration structure over $D^2_{[0,1]}$ corresponding to the deformation in Figure \ref{shiftdef} into the fibration structure over $S^2_{[0,1]}$ given by $\alpha$.\item[(ii)]\hypertarget{shiftcon[ii]} Let $\iota\subset\crit\alpha$ denote the collection of points where $\alpha|_{\crit\alpha}$ fails to be injective. As in \cite{BH} and as suggested in Figure \ref{shiftdef}, $\alpha(\iota)$ is bounded away from the one-parameter family of disks in $S^2_{[0,1]}$ depicted in Figure \ref{shiftdef}. 
\end{enumerate}
Similar to a multislide pair, a \emph{shift pair} is the pair of merge points in $\crit\alpha$ at either end of a local generalized shift deformation. In the proof of Lemma \ref{genus} there will appear pairs that satisfy Condition (\hyperlink{shiftcon[i]}{i}) but not  (\hyperlink{shiftcon[ii]}{ii}); these will be called \emph{shift pair candidates}. In other words, a shift pair candidate is a shift pair with the possibility of fold crossings coming from fold arcs in Figure \ref{shiftdef}.\end{df}

Like a local multislide deformation, a local shift deformation differs from the shift deformation that appears in \cite{BH} only in one way: In this paper, a local shift deformation might be applied to a fibration with immersed fold arcs or disconnected critical locus, while a shift deformation is a local shift deformation whose endpoints are simplified purely wrinkled fibrations. Condition (\hyperlink{shiftcon[ii]}{ii}) is nontrivial: it proscribes, for example, embeddings such that $\alpha|_{\tilde\varphi_k}$ is not injective, or embeddings such that the image of $\chi_1$ crosses folds as it travels along $p_{cm}$.

\begin{rmk}\label{addms}The shift deformation contrasts to the multislide deformation, in which the circle in $M$ traced by one cusp (while the other remains stationary) may not bound a disk. Suppose we are given a deformation that corresponds to a shift, except that the cusp merge path $p_{cm}$ and the path $\lambda$ traced by $\chi_1$ from $m_1$ to $p_{cm}(0)$ gives a circle \[c=p_{cm}\cup\tilde\varphi_k\cup\lambda\subset M_{1/2}\] that does not bound a disk. One may decompose this into a multislide deformation followed by a shift deformation as follows. Beginning at a value $t=t_0$ immediately after the initial fold merge, perform the following deformation: Cusp merge $\chi_1$ and $\chi_2$ using a cusp merge path that causes $\chi_1$ to trace out a circle which is homotopic to $c$, and then immediately perform the reverse of that cusp merge. From there, the deformation continues as it did originally, beginning with the map $\alpha_{t_0}$. In the critical surface, it looks like the result of performing a self-connect sum along a pair of points contained in the cusp arcs emanating from the initial merge point. Now the deformation consists of a multislide followed by a shift. There is clearly an analogous modification in the local case.\end{rmk}

\begin{df}A deformation that consists of a deformation followed precisely by its reverse is called a \emph{detour}.\end{df}

As in Remark \ref{addms}, any detour beginning with the wrinkled fibration $\alpha_{t_0}$ clearly can be inserted into $\alpha$ at $t=t_0$ to get a new deformation $\alpha'$ with the same endpoints, and this is a tool that will be used repeatedly. It is easy to see that $\alpha$ and $\alpha'$ are homotopic: A detour is just a loop in the space of maps $M\to S^2$ given by an arc followed by its reverse, which is a nullhomotopic loop based at $\alpha_{t_0}$.

\section{Proof of the main theorem}\label{proof}
\noindent Previous sections connected model deformations with moves on surface diagrams. The rest of the paper is concerned with the part of the proof of Theorem \ref{T} that converts a given deformation into a sequence of model deformations.

\subsection{Some notation}\label{notation}
In what follows, the notation $M_I$ for $I\times M$ where $I=[0,1]$ will be convenient, and maps $M_I\to S^2_I$ will always be smooth and will always be the identity on the first factor (that is, they are smooth homotopies). As implied, the notation $M_t$ will denote the level set $(t,M)$, and similarly for a homotopy $\alpha$, the notation $\alpha_t$ will refer to the map $M_t\to S^2_t$. Abusing notation even further, symbols like $\crit\alpha_{\{t_0,t_1\}}$ will denote $\crit(\alpha_{t_0})\cup\crit(\alpha_{t_1})$. Finally, we routinely conflate various maps with the fibration structures they induce. The critical surface of a deformation is a smooth 2-submanifold of $M_{[0,1]}$ corresponding to the moves in \cite[Section 2.4]{W1} (also paraphrased in Section \ref{defmovessection}), where there appears proof of the statements of the rest of this paragraph. Since $\alpha$ is a deformation, the function $T=t|_{\crit\alpha}$ that projects $\crit\alpha$ to the $t$-coordinate is Morse and its critical points are those at which birth and merge moves occur. The further restriction $T|_{\overline{\chi}}$ to the closure of the cusp locus $\chi\subset\crit\alpha$ is also a Morse function on a smooth 1-manifold, whose boundary consists of the cusp points in $\alpha_0$ and $\alpha_1$. Since we are assuming $\alpha_0$ and $\alpha_1$ are simplified purely wrinkled fibrations, there is one boundary circle in each $M_i$, possibly dotted with cusps, and $\alpha_i$ is injective on each. Examining the local models for birth, merge and flip, it becomes clear that the index zero and two critical points of $T$ are precisely where births and their inverses occur, while the merges occur precisely at the index one critical points of $T$. Call these points birth and merge points, regardless of whether they correspond to births and merges or their inverses. The critical points of $T$ at birth and merge points are all cusps. The critical points of $T|_{\overline{\chi}}$ which are not birth or merge points are all swallowtail points, where flipping moves occur in the deformation. 

As in Figures \ref{clean} and \ref{dirty}, the critical manifold of a deformation can be depicted with various decorations: the immersion locus $\iota$ consisting of paired immersion arcs; the cusp locus $\chi$, which is a smoothly embedded 1-submanifold of $\crit\alpha$; and the collection of swallowtail points, which coincides with $\overline{\chi}\setminus\chi$.

\begin{df}For a double point $x$ in the critical image of $\alpha$, let $\{p,p'\}=\alpha^{-1}(x)\cap\crit\alpha$. Then the \emph{counterpart} of $p$ is $p'$. Similarly, an arc or a circle in $\iota$ can have a counterpart, defined as the arc or circle in $\iota$ with the same image under $\alpha$. For any $A\subset\iota$, the counterpart of $A$ will be denoted $A'$.\end{df}

To help gain perspective on what is being depicted, here is a short comment on vanishing cycles, a further decoration that does not find use in this paper. Away from a tubular neighborhood of $\iota$ one could indeed view $\crit\alpha$ as a kind of base diagram whose fiber above any point $p\in\crit\alpha_t$ is defined by $\alpha^{-1}\circ\widetilde{\alpha(p)}$, where $\widetilde{\alpha(p)}$ is obtained by slightly perturbing $\alpha(p)$ in $S^2_t$ to a regular value in its higher-genus side (one may then extend to $\crit\alpha\setminus\iota$ by continuity). This defines a closed, orientable surface along with a distinguished simple closed curve, which is the vanishing cycle for $\alpha(p)$ in a base diagram for $\alpha_t$. In this sense, each point on the critical surface can be marked with a regular fiber, itself marked with a simple closed curve, and each immersion arc has a vanishing cycle coming from the fold that contains its counterpart: Crossing an immersion arc has the same effect on the fiber as crossing a fold arc in a base diagram. Following fibers across a cusp arc, the  distinguished vanishing cycle changes into another that transversely intersects the previous at a unique point.

Though such a decorated depiction of $\crit\alpha$ is a picture of a smooth two-dimensional submanifold of the 5-manifold $M_I$, it is only a recording of the one-parameter family of stratified immersions $\alpha(\crit\alpha)$ and vanishing cycles, so it more properly should be considered a depiction of a one-parameter family of base diagrams. This family can be recovered from the decorated critical surface by first using the immersion locus and merge points to recover the sequence of Reidemeister-type deformations ($R_2$ and $R_3$ deformations defined in the next paragraph, and flips) and merges in the critical image: These are unambiguously specified by examining whether the genus increases or decreases as one crosses the relevant immersion arcs. After adding cusp points, one might then add regular fibers marked with vanishing cycles if that information is included in the decorations. In this way, a decorated critical surface is simply another way to record information about a deformation. It is important to note that a base diagram does not in general specify the total space up to diffeomorphism; see for example the discussion around \cite[Figure 8]{GK2}. Similarly, a decorated critical surface is a typically lossy tool for recording a deformation, though it turns out to be sufficient for proving Theorem \ref{T}.

It follows from Lemma 1 of \cite{W1} that, as $t$ increases or decreases, there are only two ways for crossings to form in the critical image of a deformation: flips and $R_2$ deformations. In the critical surface, a flipping move is encoded by a pair of immersion arcs nested around a pair of cusp arcs whose common endpoint is a swallowtail point as in Figure \ref{flip}. The $R_2$ deformation appears as two immersion arcs $\iota_1$ and $\iota_2$ that appear as in Figure \hyperlink{16b}{16b}.
\begin{figure}\capstart\hypertarget{16b}
	\centering	
	\labellist
	\small\hair 2pt
	\pinlabel $a$ at 81 94
	\pinlabel $a'$ at 81 1
	\pinlabel $a'$ at 236 1
	\pinlabel $a$ at 236 110
	\pinlabel $b'$ at 236 48
	\pinlabel $b$ at 236 69
	\pinlabel $c'$ at 393 1
	\pinlabel $c$ at 391 80
	\pinlabel $a$ at 391 130
	\pinlabel $a'$ at 393 51
	\pinlabel $b$ at 391 96
	\pinlabel $b'$ at 393 30
	\endlabellist
	\subfloat[Flip.]{\makebox[3cm][c]{
	\includegraphics[width=2cm]{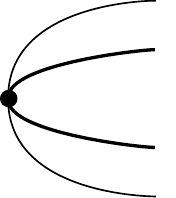}\label{flip}}}\qquad
	\subfloat[$R_2$ deformation.]{\makebox[3cm][c]{\includegraphics[width=2cm]{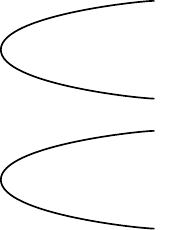}\label{$R_2$}}}\qquad
	\subfloat[$R_3$ deformation.]{\makebox[3cm][c]{\includegraphics[width=2cm]{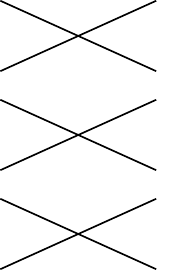}\label{$R_3$}}}
	\caption{Critical and immersion loci of $\alpha$. In such diagrams $t$ increases to the right, cusp arcs are bold, and pairs of fainter arcs are mapped to each other by $\alpha$ (components may be flipped or reordered vertically depending on $\alpha$). The dot is a swallowtail point.}\label{immerlocmods}
\end{figure} 
\hypertarget{props2}An important feature of deformations, which will be called property $s_2$, is that the maps $T|_{\iota_i}$ each have a single common critical value for each $R_2$ deformation; in other words, the arcs $\iota_i$ are tangent to the same level set $M_t$, and such tangencies always come in pairs, one pair for each $R_2$ deformation. Finally, there may be Reidemeister-III fold crossings in which cusps may lie in the initial triangle (call these $R_3$ deformations for short), whose immersion loci appear in $\crit\alpha$ as in Figure \hyperlink{16b}{16c}. \hypertarget{props3}These also come with a simultaneity condition which will be called property $s_3$: the three intersection points between immersion arcs in the critical surface must all have the same $t$-coordinate. Since $\alpha_0$ and $\alpha_1$ are injective on their critical circles, the closure of the immersion locus is a union of circles; let $c$ denote one of these (recall that swallowtail points are included in the immersion locus).

The constructions used in the proof of Theorem \ref{T} are mostly recorded as modifications of $\crit\alpha$ and its stratified immersion into $S^2_I$, and it is crucial to know that those changes actually produce deformations. Given a decorated critical surface, there are at least three conditions that must be satisfied by the result:\begin{enumerate}
\item\hypertarget{condition1} The critical points of $T$ must be precisely the birth and merge points of $\alpha$, and $\crit(T|_{\overline{\chi}})$ must be the disjoint union of $\crit T$ with all the swallowtail points.
\item\hypertarget{condition2} The immersion locus must come from an immersion of a surface into $S^2_I$ (possibly with corners coming from cusps) and follow one of the local models in Figure \ref{immerlocmods} at each tangency with any $M_t$.
\item\hypertarget{condition3} The vanishing cycles around birth, merge, swallowtail and immersion points must be valid according to:\begin{enumerate}
	\item The local models for birth, merge, and flip;
	\item The disjointness of vanishing cycles as discussed in \cite[Definition 2.1]{W2};
	\item The identifications prescribed by each pair of immersion points $(p,p')$: The vanishing cycle of an immersion point $p$ must be the vanishing cycle of the fold containing $p'$.\end{enumerate}\end{enumerate}
It is clear that the critical surface of any deformation satisfies these three conditions. In the other direction, it is a subtle question to determine whether a decorated critical surface actually comes from a deformation. In the constructions of this paper, verifying that modifications of a decorated critical surface actually yield new deformations is typically verified on an ad hoc basis by exhibiting corresponding base diagrams where necessary, using results from \cite{W2}, and using other arguments that do not appeal to the properties of any particular decorated critical surface. On the other hand, the results of Section \ref{moves} that connect the moves on surface diagrams with model deformations show that a deformation is given by a sequence of model deformations if and only if its decorated critical surface is a concatenation of the decorated critical surfaces of model deformations. The approach is to modify the decorated critical surface until the sequence of moves can be read off like in Figure \ref{clean}, though the precise implementation of each move as a modification of vanishing cycles is lost. 
\subsection{Splicing cusp arcs}\label{splice}
This section gives a few tools which will be important for gaining some control over how the cusp locus is embedded in $\crit\alpha$. The section begins with an observation about how one can push swallowtails around in a decorated critical surface.
\begin{lemma}\label{swalpushlem}By homotopy of $\alpha$, it is possible to move the swallowtail point of a flipping move backward in $t$ (or an inverse flipping move forward in $t$) across any adjacent immersion arc. The result appears in Figure \ref{swalpush}.\end{lemma}
\begin{figure}\capstart
	\centering{\includegraphics{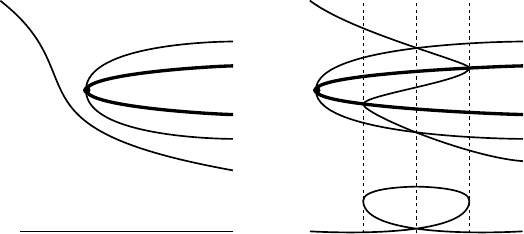}}
	\caption{Figure for Lemma \ref{swalpushlem}. A flipping move is pushed backward in $t$ so that its swallowtail point moves across an immersion arc.}\label{swalpush}
\end{figure}
\begin{proof}In such a homotopy, the initial deformation is just a flipping move occurring on a fold arc $A$ near an intersection of $A$ with another fold arc $B$. The decorated critical surface for such a deformation appears on the left side of Figure \ref{swalpush}. Following the picture on the right, the terminal deformation is also a flip on a point of $A$, but on the other side of the crossing. Then the fold arc $B$ moves across the loop formed by the flip, undergoing a pair of cusp-fold crossings and a Reidemeister-III fold crossing.

The local model for a swallowtail has a particular fibration structure
\[f_{[-\varepsilon,\varepsilon]}\co B^4_{[-\varepsilon,\varepsilon]}\to B^2_{[-\varepsilon,\varepsilon]}\]
for any $\epsilon>0$, and the required one-parameter family of deformations comes from simply choosing an appropriate extension of this fibered neighborhood by a ball of regular points of $\alpha$ (for example, take the union of $B^4_{[-\varepsilon,\varepsilon]}$ with a neighborhood of an arc that is parameterized by $t$, disjoint from $\crit\alpha$ except for its terminal point which is the swallowtail), then within this larger ball making a one-parameter family of coordinate changes that sends the swallowtail point backward in $t$. 
\end{proof}
The first tool for modifying the critical locus is a local one that introduces two swallowtail points into a cusp arc, as shown in Figure \ref{swalpair0}. As usual, the two dots are swallowtail points: the dot on the left corresponds to a flipping move, the one to the right an inverse flip. The bold arcs are cusps, and there is a fainter circle of immersion points, where vertically aligned points are mapped to each other by $\alpha$.

\begin{figure}\capstart
	\centering{\includegraphics{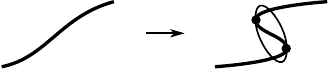}}
	\caption{A pair of swallowtail points inserted into a cusp arc.}\label{swalpair0}
\end{figure} 
Figures \ref{swalpaira} and \ref{swalpairb} give base diagrams for the right side of Figure \ref{swalpair0}. At first, there is a cusp as in Figure \ref{swalpair1}; a flip occurs, giving Figure \hyperlink{19b}{19b}; the two upper vanishing cycles in the reference fiber follow directly from the local model for flips as described in \cite{L1,W2,GK1}, while the lower two come from the fold arcs on either side of the original cusp. To obtain Figure \ref{swalpair3}, the cusp at the lower left moves into the higher-genus region, with the vanishing cycles unchanged. Next, the cusp at the top left of Figure \ref{swalpair3} moves into the lower-genus region resulting in Figure \hyperlink{20b}{20b}, at which point the loop can be shrunk away by an inverse flipping move, and the upper two vanishing cycles survive.

\begin{figure}\capstart\hypertarget{19b}%
	\centering
	\subfloat[\ ]{\label{swalpair1}\includegraphics{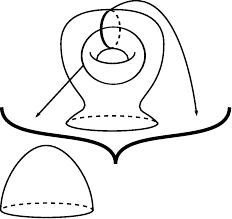}} \qquad
	\subfloat[\ ]{\label{swalpair2}\includegraphics{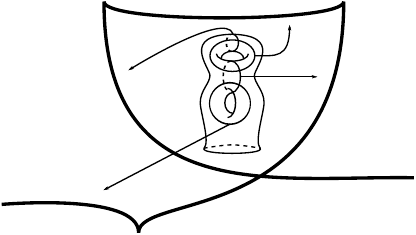}}	
	\caption{A pair of swallowtails introduced to a cusp arc, part one.}\label{swalpaira}
\end{figure}
\begin{figure}\capstart\hypertarget{20b}%
	\centering
	\subfloat[\ ]{\label{swalpair3}\includegraphics[width=0.45\linewidth]{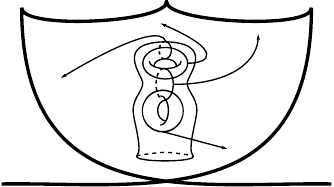}} \qquad
	\subfloat[\ ]{\label{swalpair4}\includegraphics[width=0.45\linewidth]{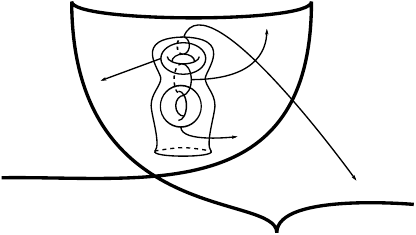}} \qquad
	\caption{A pair of swallowtails introduced to a cusp arc, part two.}\label{swalpairb}
\end{figure}

It is also possible to cancel two swallowtails that are connected by an arc of fold points as in Figure \ref{cancelswal}. In this situation, the base diagrams for the left side have an inverse flip, only for a flipping move to later occur on the same fold arc. Taking advantage of the local model for flips,\begin{equation}\label{eq:swallowtail}\left(t,x_1,x_2,x_3,x_4\right)\mapsto\left(t,x_1,x_2^4+x_2^2t+x_1x_2\pm x_3^2\pm x_4^2\right)\end{equation} in which a flipping move occurs at $t=0$, the left side of the figure has a local model obtained by replacing $t$ with $t^2+\epsilon$ for some small $\epsilon<0$; increasing $\epsilon$ to a positive value gives the right side. The gluing between the two local models of flips implicit in this construction comes from a normal framing to the fold locus along the arc of fold points connecting the two swallowtail points.
\begin{figure}\capstart
	\centering{\includegraphics{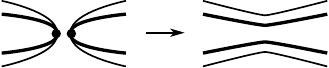}}
	\caption{Canceling a pair of swallowtails.}\label{cancelswal}
\end{figure} 

Taking further advantage of the local model, it is possible to move a flipping move backward in $t$, or an inverse flipping move forward (within the same strip of fold points), by suitably extending the embedding of its model in that direction. This strings along a trail of cusp and immersion arcs in the wake of the swallowtail point, introducing pairs of cusp arcs and immersion points into the intervening maps $\alpha_t$.

Combining these moves yields a splicing modification reminiscent of the bypass move from contact topology, as in Figure \ref{bypass}. One introduces pairs of swallowtail points to three adjacent cusp arcs, then applies the canceling move twice. Applying this move inductively, it is possible to bypass across any odd number of cusp arcs along a reference arc that intersects them all either positively or all negatively. 
\begin{figure}\capstart
	\centering{\includegraphics{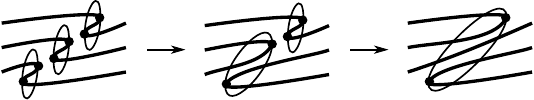}}
	\caption{Bypassing an odd number of cusp arcs.}\label{bypass}
\end{figure} A final application allows one to change the fold arc upon which any flip occurs in a deformation, as in Figure \ref{swaljump}.
\begin{figure}\capstart
	\centering{\includegraphics{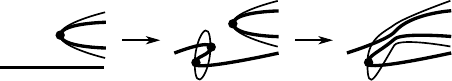}}
	\caption{Moving a swallowtail into a neighboring region of fold points.}\label{swaljump}
\end{figure}
 
\subsection{Switching between cusp and fold merges}\label{switching}
In some places it will be necessary to ensure a cusp merge occurs at a particular merge point $m\in\crit\alpha$, or that a fold merge occurs at $m$. For instance, in a multislide deformation the first merge point is a fold merge and the second is a cusp merge. To address this issue we use a trick originally due to Denis Auroux, enabling one to switch between a fold merge and cusp merge in the presence of a flipping move, as follows.

\begin{figure}\capstart\hypertarget{24b}%
	\centering
	\subfloat[\ ]{\label{atprep1}\includegraphics{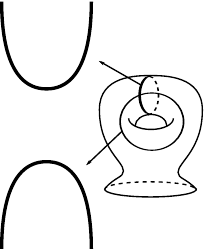}} \qquad
	\subfloat[\ ]{\label{atprep2}\includegraphics{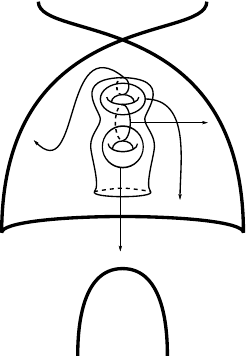}} \qquad 
	\subfloat[\ ]{\label{atprep3}\includegraphics{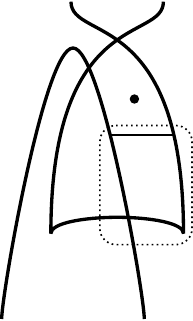}} \\
	\subfloat[\ ]{\label{atprep4}\includegraphics{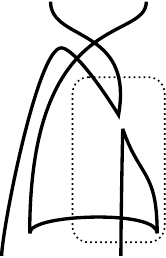}} \qquad 
	\subfloat[\ ]{\label{atprep5}\includegraphics{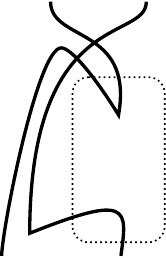}} \qquad 
	\subfloat[\ ]{\label{atprep6}\includegraphics{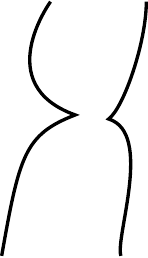}} 
	\caption{Putting a fold merge into position for reversal.}\label{atprep}
\end{figure}
Suppose a fold merge occurs at $m$, and we wish to modify $\alpha$ so that a cusp merge occurs instead. The other case, converting a fold merge into a cusp merge, is precisely the same modification with $t$ reversed. The first step is to put $m$ into a situation to which Auroux's trick applies. For this one may introduce a pair of swallowtails along its cusp arc qualitatively like in Figure \ref{swalpair0}, except the two swallowtails lie before and after the merge point with respect to $t$; the precise modification appears in Figure \ref{atprep}. The way to interpret this figure is as a replacement for the fold merge deformation, whose base diagrams are given by Figure \ref{atprep1} followed immediately by Figure \hyperlink{24b}{24f}; for fiber genus at least 2 (which will follow from Lemma \ref{genuslem} wherever it is used), the validity of the replacement follows from the validity of the intervening base diagrams and the fact that the modifications therein occur relative to the fibration above the boundary of the target disk. The modification proceeds with a flipping move followed by an $R_2$ deformation to obtain Figure \hyperlink{24b}{24c}; the vanishing cycles follow directly from the local model for the flip and the $R_2$ deformation is valid by \cite[Proposition 2.7]{W2}. The decorated reference fiber in Figure \hyperlink{24b}{24b} can be transferred to be the reference fiber above the dot in Figure \hyperlink{24b}{24c}, with arrows pointing at the same fold arcs as before. In other words, the fold arcs of Figure \hyperlink{24b}{24c} \emph{inherit} their vanishing cycles from Figure \hyperlink{24b}{24b} and we will use that term repeatedly in such contexts. Performing the indicated fold merge to obtain Figure \hyperlink{24b}{24d}, the next steps are to cancel the top two intersections with an $R_2$ deformation (which is valid by an application of \cite[Proposition 2.7]{W2} followed by \cite[Proposition 2.5(2)]{W2}) and to close off the lower loop by an inverse flip, which can be seen to be valid by drawing in the three relevant vanishing cycles inherited from those of Figure \hyperlink{24b}{24b}. The deformation above the three dotted rectangles now contains a fold merge that can be replaced by one containing a cusp merge using Auroux's trick, which follows.
\begin{figure}\capstart
	\centering{\includegraphics{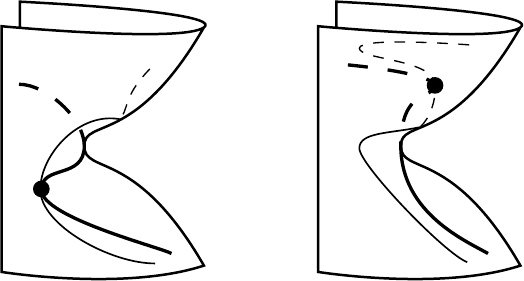}}
	\caption{Switching between cusp merge (left) and fold merge (right).}\label{circlecbd} 
\end{figure} 

The left side of Figure \ref{circlecbd} is a depiction of the saddle-shaped critical surface in $M_I$ above the dotted rectangles in Figure \ref{atprep} (for $t$ increasing left to right, it corresponds to the reversed order \hyperlink{24b}{24e}, \hyperlink{24b}{24d}, \hyperlink{24b}{24c} because it is easier to depict and validate the deformations this way). As usual, the bold arcs are cusp arcs, and in each figure $m$ lies at the saddle point of the surface, with a flipping move occurring at the dot. The fainter arcs are the immersion locus. It is is not difficult to deduce the base diagrams specified by the left side of Figure \ref{circlecbd} by considering vertical slices of the figure from left to right: the progression for the left side appears in Figure \ref{circlebd1}, which is a copy of what happened in the dotted rectangles from before. Here a fold arc experiences a flip, then one of the resulting cusps merges with another preexisting cusp. 

\begin{figure}\capstart
	\centering
	\subfloat[The reverse of this deformation appears in the sequence of three dotted rectangles in Figure \ref{atprep}.]{\label{circlebd1}\includegraphics{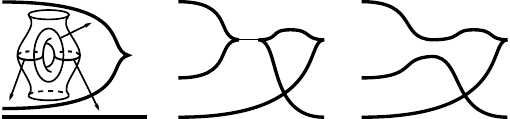}}\\
	\subfloat[The reverse of this deformation can replace the sequence of three dotted rectangles in Figure \ref{atprep}.]{\label{circlebd2}\includegraphics{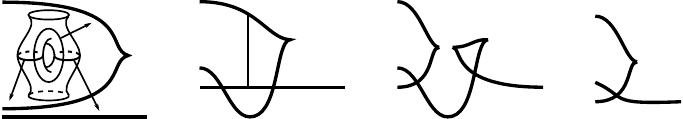}}
	\caption{Base diagrams for the left and right side of Figure \ref{circlecbd}, respectively.}\label{circlebd} 
\end{figure} 
The right side of Figure \ref{circlecbd} has base diagrams given by Figure \ref{circlebd2}, which will replace those of Figure \ref{circlebd1} in the dotted rectangles. Again, following vertical slices from left to right, one may deduce that the sequence of moves must begin with an $R_2$ deformation between a fold arc and another fold arc containing a cusp. The deformation concludes with a fold merge and an inverse flip as appears in Figure \ref{circlebd2}. For fiber genus at least 2, which will be the case according to Lemma \ref{genuslem} below, the validity of these moves and the intended substitution follow entirely from the vanishing cycles in the initial base diagrams in Figure \ref{circlebd}, which are inherited from the base diagrams of Figure \ref{atprep}. In Figure \ref{circlebd2} the initial $R_2$ deformation is valid because the vanishing cycles involved are disjoint. The following fold merge and inverse flip are straightforward to verify using the vanishing cycles inherited from the initial base diagram. To restate the prescription for turning a fold merge into a cusp merge, one first substitutes the reverse of Figure \ref{circlebd2} into the dotted rectangles of Figure \ref{atprep}, and then substitutes the resulting deformation for a neighborhood of the merge point $m$. 
 
\subsection{Modification of the critical manifold}\label{mod}
 As discussed in Section \ref{overview}, the argument begins with a given deformation $\alpha\co M_I\to S^2_I$ whose endpoints are maps that induce surface diagrams. Provided the endpoints are homotopic, a deformation between them was proven to exist in \cite[Theorem 1]{W1}, but we need something more: by a theorem of Gay and Kirby, one may assume the fibers of $\alpha$ are connected. Taking their arguments slightly further, the following lemma is the starting point for our modifications. From this point on, $\alpha$ should be taken to be the deformation that results from applying any preceding lemmas.
\begin{lemma}\label{genuslem}
Let $\alpha_0$ and $\alpha_1$ be homotopic simplified purely wrinkled fibrations, both of which have lower fiber genus at least 2. Then there is a deformation $\alpha$ from $\alpha_0$ to $\alpha_1$ such that every regular fiber is connected and has genus at least 2.\end{lemma}
\begin{proof}In the connected fibers part of the proof of Theorem 1.2 in \cite{GK1}, they explain how to use arcs in $M_I$ connecting distinct path components of fibers of $\alpha$ as a prescription for modifying the deformation in a way that results in the connected sum of the two path components at their endpoints (they use the symbol $G_s^{-1}(q)$ to denote the preimage of $p\in S^2_s$ in the deformation $G$ at time $s$). Applying this construction finitely many times to the preimages of regions of the target space that have disconnected fibers results in a deformation with connected fibers. Since their modification does not actually require the fibers to be disconnected, it can be used to inflate the fiber genus over regions in the interior of the target space. 

Here is another argument coming from an idea of David Gay, using similar ideas. Given any point $(t,x)\in S^2_I$, there is a neighborhood $U_x=(t-\epsilon,t+\epsilon)\times D^2$, sufficiently small such that $\alpha|_{\alpha^{-1}(U_x)}$ has a one-parameter family of sections disjoint from $\crit\alpha$, for some $\epsilon>0$ depending on $(t,x)$ and the chosen $D^2$. Cover $S^2_I$ with such neighborhoods and choose a finite subcover with $n$ members, such that no member is contained in another. Denote the section $s^i$ sitting over the member 
\[N^i\approx(t-\epsilon,t+\epsilon)\times D^2\] 
of this open cover. Because it is the intersection of a pair of smooth 3-balls in a 5-manifold, generically each path component of $s^i\cap s^j$ is a one-dimensional submanifold $\gamma\subset M_I$, transverse to the level sets $M_t$ at all but finitely many points. Note $\gamma$ could be a circle or could be diffeomorphic to a closed interval. Choose a subarc $\tilde\gamma\subset\gamma$ that is transverse to the level sets $M_t$ and whose endpoints are either endpoints of $\gamma$ or tangencies between 
$\gamma$ and $M_t$. Choose a one-parameter family $p$ of smooth embedded paths $p_t\subset\overline{s^i_t}\cap\alpha^{-1}_t(N^j)$ from $\tilde\gamma_t$ to $\partial\overline{s^i_t}$. This can be done by taking the family of lifts specified by $s^i$ of a one-parameter family of arcs in $\overline{N^i}\cap N^j$ connecting $\alpha_t(\tilde\gamma_t)$ to $\partial\overline{N^i}$, which is possible because of the three following facts:\begin{enumerate}\item $\alpha_t(\tilde\gamma_t)\in\overline{N^i_t}\cap N^j_t$ for all $t$ in the support of $\tilde\gamma$,\item $\partial\overline{N^i_t}\cap N^j_t\neq\emptyset$ because $N_j^t$ is not contained in $N_i^t$, and\item $\overline{N^i_t}$ is contractible.\end{enumerate} Now set 
$\tilde s^i=s^i\setminus\overline{\nu p}$, where $\nu p$ denotes a neighborhood of $p$ in $s^i\cap\alpha^{-1}(N^j)$. Now the new collection 
\[\{s^1,s^2,\ldots,s^{i-1},\tilde s^i,s^{i+1}\ldots,s^n\}\]
still projects to an open cover of $S^2_I$, but the intersection arc $\tilde\gamma$ has been removed without adding new intersections. Repeating this process finitely many times results in an open cover $\mathcal{U}$ of $S^2_I$ such that each member has a section of regular points diffeomorphic to the product of an open interval and an open disk, and these sections are pairwise disjoint in $M_I$. Now choose pairwise disjoint tubular neighborhoods $H^i$ of these sections in $M_I$. Within each of these, choose coordinates such that the deformation $\alpha$ is a trivial deformation $\R^4_I\to\R^2_I$ given by the obvious projection. The deformation $\beta$ consisting of a birth followed by its inverse in $\R^4_I$ has the same boundary fibration as the trivial deformation, so it is possible to replace those trivial deformations in the neighborhoods $H^i$ with copies of $\beta$. The result of this is to increase the fiber genus over the 3-ball bounded by each copy of $\crit\beta$. One may assume each copy of $\crit\beta$ is sufficiently close to the boundary of each member of $\mathcal{U}$ so that the fiber genus increases by at least 1 at every point away from $S^2_{[0,\varepsilon]}$, over which we may assume $\alpha_t=\alpha_0$, and away from $S^2_{[1-\varepsilon,1]}$, over which we may assume $\alpha_t=\alpha_1$, for some small $\varepsilon>0$. This procedure can be repeated once more to ensure all regular fiber components have genus at least 2.\end{proof}
\begin{rmk}\label{genusrmk}The modifications that appear from now on do not decrease fiber genus or introduce disconnected fibers in the sense that they do not increase $\max_{p\in S^2_I}\left\{\chi(\alpha^{-1}(p))\right\}$. This is achieved by not performing genus-decreasing finger moves or $R_3$ deformations in which the triangle is on the higher-genus side of all three fold arcs (see \cite[Propositions 2.5(1a) and 2.9]{W2}, respectively), and by similarly restricting modifications to the immersion locus (see Lemma \ref{r3moves}).\end{rmk}
\begin{lemma}\label{connectedcrit}Let $\alpha$ be a deformation given by Lemma \ref{genuslem}. Then there is a deformation with the same endpoints whose critical locus is connected.\end{lemma}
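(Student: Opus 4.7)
The plan is to view the critical locus $C := \crit\alpha$ as a properly embedded $2$-submanifold with cusp corners in $M_{[0,1]}$, whose boundary is the disjoint union of the (single) critical circles of $\alpha_0$ and $\alpha_1$. I would induct on the number of components of $C$; there is nothing to show when $C$ is already connected. At each inductive step the goal is to perform a local modification of $\alpha$ that attaches a $1$-handle to $C$ fusing two components into one, while fixing $\alpha_0$ and $\alpha_1$.

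Pick two components $C_a, C_b$ of $C$. The Morse function $T\co C\to[0,1]$ has its index-zero and index-two critical points at births and inverse births, with index-one critical points at merges and inverse merges. Using standard Cerf-theoretic rearrangement, I would commute these critical levels past one another whenever they occur on disjoint parts of the base, until the $t$-intervals $T(C_a)$ and $T(C_b)$ overlap at some value $t_0$. In the degenerate case where no overlap can be produced this way (e.g.\ $C_a$ is closed and dies strictly before $C_b$ is born), I would first insert an auxiliary birth/inverse birth pair straddling the relevant subinterval, producing an intermediate bubble that can play the role of $C_a$ or $C_b$ in the sequel.

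With fold circles $c_a\subset C_a$ and $c_b\subset C_b$ coexisting in the slice $M_{t_0}$, the key modification is to insert a merge at $t_0$ combining $c_a$ and $c_b$ into a single circle, followed by an inverse merge at $t_0+\epsilon$ along a different pair of cusps, so that the two-circle picture is restored in $M_{t_0+\epsilon}$ but reassembled from exchanged arcs. The merge is an index-one critical point of $T$ on the modified critical manifold, a saddle at which $C_a$ and $C_b$ fuse; the subsequent inverse merge restores the original critical pattern in the slice so that the remainder of $\alpha$ may be appended unchanged. Topologically the pair attaches a tube to $C$, dropping the component count by one. Iterating this step on every pair of components reduces $C$ to a single component.

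The main obstacle is verifying vanishing cycle compatibility at the inserted merges: each merge requires the two fold arcs' vanishing cycles to intersect transversally at a single point in the fiber. To arrange this I would first apply handleslide deformations from Section~\ref{handleslide} to adjust the relevant vanishing cycles, and, if these are insufficient, introduce flips via the splicing constructions of Section~\ref{splice} to produce new fold arcs whose vanishing cycles are tailored for merge. Careful bookkeeping of these preparatory moves (inserting each one together with its inverse, separated by the merge--inverse-merge pair) ensures that they alter neither $\alpha_0$ nor $\alpha_1$, so the entire modification sits rel the boundary of $M_{[0,1]}$, as required.
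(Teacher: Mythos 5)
Your overall strategy is the same as the paper's: arrange, by Cerf-type rearrangement of the critical values of $T$, that the two components coexist at some generic $t_0$, then splice in at $t_0$ a merge joining a fold arc of one component to a fold arc of the other, immediately followed by an inverse merge, so that the critical surface acquires a saddle pair tubing the two components together while $\alpha_0$ and $\alpha_1$ are untouched; induct on the number of components. The paper does exactly this, pushing the birth point of the extra component backward in $t$ to create the overlap and then inserting the connecting deformation of Figure~10 of \cite{L1} ``and immediately the reverse of that deformation.''

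The one step of your write-up that does not work as stated is the insistence that the inverse merge be performed ``along a different pair of cusps,'' so that the two circles in $M_{t_0+\epsilon}$ are ``reassembled from exchanged arcs,'' while simultaneously claiming that this ``restores the original critical pattern in the slice so that the remainder of $\alpha$ may be appended unchanged.'' These two claims are incompatible: if the arcs are exchanged, the resulting fibration on the slice is in general not $\alpha_{t_0}$ (an inverse merge at a different pair of cusps is a shift/multislide-type modification), so you cannot simply append the rest of $\alpha$. The exchange is also unnecessary: if the inverse merge is taken to be the exact reverse of the merge, the slice fibration returns to $\alpha_{t_0}$ on the nose, the remainder of $\alpha$ glues in trivially, and the component count of $\crit\alpha$ still drops by one, because the trace of the two circles between the two saddle points is a connected cobordism (two circles become one and then two again). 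That is precisely the paper's choice. Your remaining concern -- that the inserted merge needs the two fold arcs to face each other with vanishing cycles meeting transversely in one point, to be arranged by handleslide deformations or the splicing/flip constructions -- is legitimate and is glossed over in the paper, which delegates it to the cited connecting deformation of \cite{L1}; your proposed preparation, inserted together with its inverse so as to be supported away from $t=0,1$, is a reasonable way to supply that detail.
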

\begin{proof}The following argument is reminiscent of Theorem 6.1 of \cite{L1}, though variations of it have appeared from time to time, going as far back as \cite{Lev}. Denote by $A$ the path component of $\crit\alpha$ whose image under $T$ contains the minimum value of $T$, and suppose there is a path component $B\subset\crit\alpha$ distinct from $A$. Then $B$ must have an index 0 $T$-critical point because $\crit\alpha_0$ is connected, and by a homotopy of $\alpha$ one may simply push that point backward in $t$ until $T(A)\cap T(B)$ contains an open interval; choose $t_0$ in that interval. Here follows a short deformation $f$ whose endpoints are $\alpha_{t_0}$. 

If $A$ has a cusp at $t=t_0$ then there exists a curve $v\co[0,1]\to M$ from that cusp to one of the two cusps formed in the birth by which $B$ appeared, transverse to the fibers, whose intersection with $\crit\alpha$ at $t_0$ is precisely the two cusps at its endpoints. Restricting the fibration to a neighborhood of $v$, by a small perturbation near its endpoints it is possible to arrange for $v$ to specify a cusp merge as in the right side of Figure 4 of \cite{W1}. Starting with the fibration $\alpha_{t_0}$, insert the detour given by performing the cusp merge and immediately its reverse. If there is no such cusp, begin with a flip on a fold arc of $A$ to introduce two cusps in $A$, perform the merges mentioned above, then end with the inverse flip that returns the fibration to $\alpha_{t_0}$. This is the deformation $f$. It is clear that $\crit f$ is a cylinder with two $T$-critical points, one at each merge point. The endpoints of $f$ agree with $\alpha_{t_0}$, so that it is possible to insert this deformation into $\alpha$ to get a new deformation, resulting in the connect sum of $A$ and $B$. Repeating this process a finite number of times decreases the number of components of $\crit\alpha$ to one.\end{proof}

\begin{lemma}\label{cancelmorsecp}
Let $\alpha$ be the deformation resulting from Lemma \ref{connectedcrit}. Then there is a deformation with the same endpoints satisfying exactly one of the following conditions:\begin{itemize}\item If $\crit\alpha_0$ and $\crit\alpha_1$ are nonempty, then $\crit T$ consists entirely of merge points.\item If exactly one of $\crit\alpha_0$ and $\crit\alpha_1$ is nonempty, then $\crit T$ has merge points and a single birth point.\item If $\crit\alpha_0=\crit\alpha_1=\emptyset$, then $\alpha$ can be assumed to be the trivial deformation $\alpha_t=\alpha_0$, $t\in I$.\end{itemize}\end{lemma}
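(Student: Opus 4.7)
The plan is to view $\crit\alpha$ as a smooth compact $2$-manifold with boundary $\crit\alpha_0\sqcup\crit\alpha_1$ embedded in $M_{[0,1]}$, and $T$ as a Morse function on it whose index-$0$ and index-$2$ critical points are (inverse) births and whose index-$1$ critical points are (inverse) merges. By Lemma~\ref{connectedcrit} we may assume $\crit\alpha$ is connected. With this picture, the statement reduces to standard Morse-theoretic handle cancellation on $\crit\alpha$, realized by a local modification of the deformation $\alpha$ near each canceling trajectory.

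First I would establish the local realization of a single cancellation. Given a gradient trajectory $\gamma$ of $T$ on $\crit\alpha$ connecting an index-$0$ critical point to an index-$1$ critical point, meeting no other critical point of $T$, a tubular neighborhood of $\gamma$ in $M_{[0,1]}$ should support a model deformation in which both critical points of $T$ are absent while it agrees with $\alpha$ outside the neighborhood. Intuitively, a birth immediately followed by a merge into the rest of $\crit\alpha$ along the trajectory can be replaced by an ambient isotopy of the critical image, since the cusped circle created at the birth is reabsorbed without making any intervening modification to the reference-fiber data. An entirely analogous local model handles index-$1$/index-$2$ pairs.

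Next I would iterate these cancellations using classical Morse theory on surfaces with boundary. When $\crit\alpha_0\neq\emptyset$, every index-$0$ point of $T$ can be paired with a suitable index-$1$ point (after a small perturbation of the gradient-like vector field to ensure a single-trajectory connection if necessary) and cancelled; symmetrically for index-$2$ points when $\crit\alpha_1\neq\emptyset$. When $\crit\alpha_0=\emptyset$, exactly one index-$0$ point must remain, since the critical locus has to originate somewhere in $(0,1]$; likewise for index-$2$ points when $\crit\alpha_1=\emptyset$. This settles cases (i) and (ii). In case (iii), after performing all available cancellations each component of $\crit\alpha$ is either empty or a closed surface arising entirely inside the deformation from a birth-and-annihilate sequence; such a component contributes a null-homotopic loop in the space of purely wrinkled fibrations based at $\alpha_0$ and can be homotoped away, leaving the constant deformation $\alpha_t=\alpha_0$.

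The main obstacle is verifying that an abstract Morse cancellation on $\crit\alpha$ really can be realized at the level of $\alpha$ itself: the tubular neighborhood of a canceling arc must avoid the rest of the critical image and must support a genuine deformation of purely wrinkled fibrations rather than introducing higher-order singularities. The swallowtail splicing techniques of Section~\ref{splice} are the natural tool for rearranging the critical image along a canceling arc to separate it from extraneous cusps and fold intersections before the local model is applied; arranging disjoint canceling arcs for all the pairs simultaneously, so the iterated cancellations do not interfere, is the delicate bookkeeping step.
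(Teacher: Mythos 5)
Your Morse-theoretic framing (view $T\co\crit\alpha\rightarrow[0,1]$ as a Morse function on a connected surface and cancel index-$0$/index-$1$ pairs by local modifications of $\alpha$) matches the paper's starting point, but the step you defer as ``the main obstacle'' is exactly where the content of the proof lies, and the local model you propose for it is not correct. A birth immediately followed by a merge into the rest of the critical locus cannot be replaced by an ambient isotopy of the critical image: the map just after the merge has a different critical configuration from the map just before the birth (four additional cusps, a reconnected fold locus), so no isotopy connects them and the reference-fiber data genuinely changes. The paper's argument instead replaces this birth-plus-merge piece by a deformation consisting of two flipping moves (Example 1 of \cite{W}, Figure~\ref{bm}), after a nontrivial verification that any extra swallowtails and immersion arcs appearing between $b$ and $m$ can be pushed out of the picture --- this uses the disjointness of the vanishing cycle created by a flip to rule out the new fold arc participating in the merge. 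Moreover, you treat only one of the two subcases: when the cusp arcs at $m$ emanate backward in $t$ (an inverse merge), no flip substitution is available, and the paper resorts to a different device --- splicing cusp arcs (Section~\ref{splice}) so that $b$ and $m$ lie on a common cusp arc $\sigma$, then opening a sphere of critical points by a birth/inverse-birth along a disk $\Delta$ and performing a one-parameter family of inverse merges along $\sigma$ as in \cite{S}, which is the surface connect-sum realizing the Morse cancellation. Your proposal names the splicing tools but gives no argument that the cancellation can actually be realized, so the lemma is not proved.

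Two smaller points. In case (iii) you assert that a closed component of $\crit\alpha$ ``contributes a null-homotopic loop in the space of purely wrinkled fibrations'' and can be homotoped away; this is an unsupported claim that essentially restates the goal, whereas the paper reduces to the statement that both endpoint maps are genus $\geq 3$ surface bundles over $S^2$ and invokes uniqueness of such bundles (simple connectivity of $\diff(\Sigma_g)$ for $g\geq 2$) to take the trivial deformation. Also, your bookkeeping of ``disjoint canceling arcs for all pairs simultaneously'' is not how the paper proceeds: it cancels pairs one at a time, always taking $m$ to be the first canceling merge point after $b$ (so that $\crit T$ meets the relevant region only in $b$ and $m$), and then repeats with $t$ reversed for the index-$1$/index-$2$ pairs; this ordering is what keeps the local pictures clean and should replace the simultaneity requirement.
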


\begin{proof}First assume $\crit\alpha_0$ and $\crit\alpha_1$ are nonempty, so that the goal, in other words, is to remove all the birth points. By Lemma \ref{connectedcrit}, $\crit\alpha$ is connected so that $\crit\alpha$ is nonempty just before any birth move.
\begin{figure}\capstart%
	\labellist
	\small\hair 2pt
	\pinlabel $b$ at 53 92
	\pinlabel $b$ at 277 92
	\pinlabel $m$ at 265 45
	\endlabellist
	\centering
	\subfloat[A birth occurs. Cusp arcs run along the profile of the upper disk. ]{\makebox[3.6cm][c]{\includegraphics[width=3cm]{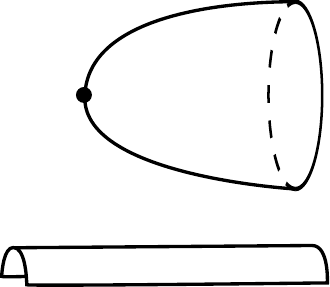}\hypertarget{27b}}\label{bi1}}\qquad
	\subfloat[A self-connect sum in $\crit\alpha$.]{\makebox[3.6cm][c]{\includegraphics[width=3cm]{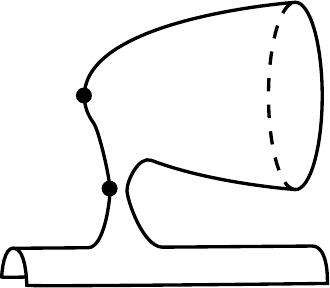}\label{bi2}}}\qquad
	\subfloat[Perturb to eliminate $b$ and $m$.]{\makebox[3.6cm][c]{\includegraphics[width=3cm]{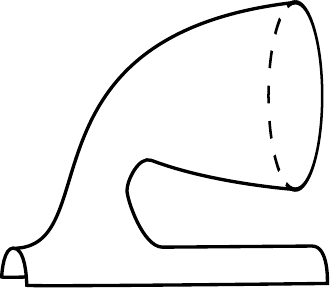}\label{bi3}}}
	\caption{Canceling critical points of $T$ in the proof of Lemma \ref{cancelmorsecp}.}\label{bi}
\end{figure}

Suppose a birth occurs, with birth point $b\in M_t$, and choose small $\epsilon>0$. As suggested by Figure \ref{bi}, and just like in the proof of Lemma \ref{connectedcrit}, perform a self-connect sum of $\crit\alpha$, where the initial cusp merge has its merge point $m\in M_{t+\epsilon}$, and occurs between one of the cusps that emanate from the birth point $b$, and a cusp point that does not lie within the newly introduced critical circle coming from $b$. Choosing $\epsilon$ sufficiently small, the deformation in a neighborhood of the cusp arc connecting $b$ and $m$ appears in base diagrams as a birth near a cusp point, immediately followed by a cusp merge between that cusp and one of the cusps in the newly introduced critical circle $C$. Now it is not hard to see that there is a homotopy of such a deformation in which $m$ moves toward the level set $M_t$, canceling $b$ as in Figure \hyperlink{27b}{27c}. This completes the argument for removing birth points from $\alpha$, resulting in a new deformation; applying the same argument with $t$ reversed removes the inverse birth points in a symmetric manner.

Two cases remain in the proof, as follows. If $b$ minimizes $T$, then $\crit\alpha_0$ is empty and one simply ignores $b$, applying the previous argument to all other birth points, with $b$ serving as the unique birth point coming from an initial stabilization of $\alpha_0$, creating a critical circle that survives through the deformation to be $\crit\alpha_1$ (one reverses the $t$ parameter if $\alpha_1$ is the side with empty critical locus). Finally, it may happen that $\crit\alpha_0=\crit\alpha_1=\emptyset$. In this case, the corresponding surface diagrams are both empty and $\alpha$ can be assumed to be the trivial deformation, $\alpha_t=\alpha_0$ for all $t\in I$, because of the uniqueness of surface bundles of genus at least three over the 2-sphere.\end{proof}

Recall the Definitions \ref{mspairdf} and \ref{shiftpairdf} for \emph{multislide pair} and \emph{shift pair}, respectively. Hereafter, the term \emph{merge pair} refers to a pair of merge points that forms either a shift pair or a multislide pair.

\begin{lemma}\label{genus}The deformation $\alpha$ resulting from Lemma \ref{cancelmorsecp} can be modified so that every index one critical point of $T\co\crit\alpha\to I$ is part of a merge pair.\end{lemma}

\begin{proof}Denote the first merge point of the deformation $\alpha$ at $t=t_1$ by $m_1\in\crit\alpha_{t_1}$. Because $\crit\alpha_0$ is connected for $t\leq t_1$, $\crit\alpha_t$ has two components for $t$ slightly larger than $t_1$. Since $\crit\alpha_1$ is connected, eventually all components much reunite at some merge point (called $m_2\in\crit\alpha_{t_2}$ below) and the first step (speaking as if there were never more than two components) is to define a pair of paths between $m_1$ and $m_2$, one in each component. More precisely, choose two smooth curves $c_{[t_1,t_2]}^i\subset\crit\alpha_{[t_1,t_2]}$, $i=1,2$, satisfying the following conditions:
\begin{itemize}
	\item $c_{t_1}^i=m_1$ and $c_{t_2}^i=m_2$ for $i=1,2$ and for some $m_2\in\crit\alpha_{t_2}$, where $t_1<t_2$.
	\item $c_t^1$ and $c_t^2$ lie in distinct path components of $\crit\alpha_t$ for $t\in(t_1,t_2)$.
\end{itemize}
The above conditions imply that the point $m_2$ is the first merge point at which the path components of $\crit\alpha_t$ containing $c_t^i$ become reunited (as above, there exists such $m_2$ because $\crit\alpha_1$ is connected). The idea of the proof is to first use the splicing deformation to introduce a cusp arc which is in some sense as isotopic as possible to the circle $c=c^1\cup c^2$, then use it in a surgery to shrink $c$ to size, which connects those path components at each $t$ except for a short interval containing $t_1$. The next step is to insert detours containing pairs of merge points near $m_1$ which allow the immersion locus to be organized such that the result is a sequence of merge pairs starting with $m_1$. The result then follows inductively, applying the same construction for the next merge point to occur after this sequence.

Recall that for $m_1,m_2$ to form a merge pair, it is necessary for a fold merge to occur at $m_1$ and a cusp merge at $m_2$. If this is not the case for some $m_i$, use Auroux's method from Section \ref{switching} to make it so, and construct $c^1,c^2$ as above.

If necessary, perturb $c^i$ to be transverse in $\crit\alpha$ to the cusp and swallowtail locus, so that $c^i$ consists of fold points except for $k^i$ transverse intersections with the cusp locus. If $k^i$ is odd for $i=1$ or $i=2$, the splicing and extending methods of Section \ref{splice} allow one to arrange that $m_1$ and $m_2$ lie within a single cusp arc. Otherwise, it is possible to arrange for the cusp arcs containing $m_1$ and $m_2$ to be adjacent in the part of $\crit\alpha_{(t_1,t_2)}$ containing the arcs $c^1$ and $c^2$ (that is, for $i=1,2$ there is an arc of fold points in $M_t$ which contains $c^i_t$ and connects the two cusp points in the critical circle $\crit\alpha_t$, for each $t\in(t_1,t_2)$). In either case, let $\gamma\subset\crit\alpha_{[t_1,t_2]}$ denote the cusp arc containing $m_2$.

After applying these modifications to the cusp locus, the next step is to choose a positive $\epsilon\ll t_2-t_1$ and a cusp merge path $p_{cm}\co[0,1]\to M_{t_1+\epsilon}$ between the cusps that reunite at $m_2$, as follows. The cusp arc $\gamma\subset M_{[t_1+\epsilon,t_2]}$ from the last paragraph has both endpoints in $M_{t_1+\epsilon}$, and $T|_\gamma$ has exactly one critical point, the merge point $m_2$. Let $\tilde p_{cm}$ denote the projection of $\gamma$ to $M_{t_1+\epsilon}$. Now, $\tilde p_{cm}\cup\gamma$ is the boundary of a smooth embedded disk $D\subset M_{[t_1+\epsilon,t_2]}$, whose intersections with the level sets $M_t$ form a one-parameter family of embedded arcs which is transverse to the fibers ($TD$ is spanned by $\frac{\partial}{\partial t}$ and a lift to $M_t$ of a nonzero tangent vector to the image of each path in $S^2_t$). Further, since $\dim D+\dim\crit\alpha<\dim M_{[0,1]}$, $D$ can be chosen disjoint from $\crit\alpha$ except along the boundary arc that coincides with $\gamma$. After possibly perturbing $D$ near the part of its boundary that coincides with $\gamma$, $D_{[t_1+\epsilon,t_2-\epsilon]}$ becomes a one-parameter family of cusp merge paths (each with an unspecified framing in the sense of \cite[Section 3.2.2]{BH}); set $p_{cm}$ to be a parameterization of the arc $D_{t_1+\epsilon}$. There is a framing of the arc $D_{t_2-\epsilon}$ that makes it a cusp merge path for $m_2$, and transporting this framing back across $D$ to lie in $M_{t_1+\epsilon}$ also turns $p_{cm}$ into a framed cusp merge path between the endpoints of $\gamma$. This is the unique framing such that, if one were to insert the detour given by performing the cusp merge according to $p_{cm}$ (producing a merge point $m_1'$ in, say, $M_{t_1+2\epsilon}$) and then immediately its reverse (a fold merge producing the merge point $m_1''\in M_{t_1+4\epsilon}$), in a neighborhood of $D\cap M_{[t_1+3\epsilon,t_2]}$ the deformation would be given by the concatenation of local models for the fold merge at $m_1''$ followed precisely by its reverse at $m_2$. The fibration structure near $D\cap M_{[t_1+3\epsilon,t_2]}$ could then be replaced by one whose base diagrams all consist of a pair of parallel fold arcs, leaving the merge points $m_1$ and $m_1'$ and no others. For those who are not satisfied with the claim that the fibration structure near $D\cap M_{[t_1+3\epsilon,t_2]}$ is itself a detour, and so can be replaced as claimed, the replacement can be explicitly achieved using a homotopy of $\alpha$ supported on a fibered neighborhood of $D$, parameterized by $s\in[0,1]$, modeled on
\begin{equation}\label{eq:H_st}H_{s,t}\co(s,t,x_1,x_2,x_3,x_4)\mapsto (t,x_1,x_2^3+3((1-s)(1-2t^2)-s-x_1^2)x_2+x_3^2-x_4^2).\end{equation}
This model comes from beginning with the fold merge deformation
\[FM_t\co(t,x_1,x_2,x_3,x_4)\mapsto(t,x_2^3+3(t-x_1^2)x_2+x_3^2-x_4^2),\ t\in[-1,1]\]
\cite[Equation 8]{W1}, doubling it along its terminal fibration $FM_1$ by replacing the $t$ in the coordinate
\[x_2^3+3(t-x_1^2)x_2+x_3^2-x_4^2\]
with $1-2t^2$ to obtain
\[x_2^3+3((1-2t^2)-x_1^2)x_2+x_3^2-x_4^2,\]
then interpolating the resulting deformation from $H_{0,t}=FM_{1-2t^2}$ toward the deformation $H_{1,t}=FM_{-1}$ by scaling the timelike parameter $1-2t^2$ by $1-s$. It is important to note that the presence of immersion arcs intersecting $c^1$ or $c^2$ does not affect the validity of this construction, because this is a modification of the fibration structure of $M_I$ induced by $\alpha$ only near $D$.

Choosing $\epsilon$ small enough, there are no birth, merge or flipping moves in the interval $(t_1,t_1+2\epsilon)$ between the two merge points $m_1,m_1'$ because they are supported in arbitrarily small balls in $M_I$ that can be assumed disjoint from $M_{t_1}$. However, Condition (\hyperlink{mscon[ii]}{ii}) for multislides or (\hyperlink{shiftcon[ii]}{ii}) for shifts is not automatically satisfied: using for instance the labels from Figure \ref{shiftdef}, there may be cusp-fold crossings involving $\chi_1$ as it travels along its cusp merge path $p_{cm}$ (which could even have self-intersections in its image in $S^2$), or fold crossings involving $\tilde\varphi_k$ that exist throughout the interval $[t_1,t_1+2\epsilon]$. Both of these are forbidden in Definitions \ref{mspairdf} and \ref{shiftpairdf}. One way to address this is to convert $(m_1,m_1')$ into a sequence of shift pair candidates (recall the end of Definition \ref{shiftpairdf}), then give an algorithm for achieving Condition (\hyperlink{shiftcon[ii]}{ii}) for these candidates. Once this is done, Condition (\hyperlink{shiftcon[i]}{i}) is easily verified.

Suppose $k$ was odd for both $c^1$ and $c^2$, so that $m_1$ and $m_1'$ are contained in a circle of cusp points like at the top left of Figure \ref{shiftcand}. To turn the pair into two pairs of merge points, each of which is a shift pair candidate, insert a detour given by a flip at $t=t_1-\epsilon$, followed by its reverse, so that there appears a small loop just to the side of either of the cusps that forms at $m_1$, then immediately perform its reverse. This creates a pair of cusp arcs in $\crit\alpha$ that meet in a pair of swallowtails. Use Lemma \ref{swalpushlem} to extend the local model for the inverse flip forward in $t$ past $m_1'$, resulting in the left side of Figure \ref{shiftcand}. Now there is a cusp arc $\gamma'$ (whose endpoints are the newly introduced swallowtails) adjacent in the fold locus to the circle containing $m_1$ and $m_1'$. Using an arbitrary cusp merge path, perform a self-connect sum in $\crit\alpha$ between $\gamma'$ and the cusp circle containing $m_1$ and $m_1'$ using the same kind of detour as for Figure \hyperlink{27b}{27b} (such a cusp merge path exists by \cite[Proposition 2.7]{W2}); the result is a pair of shift pair candidates.
\begin{figure}\capstart%
	\labellist
	\small\hair 2pt
	\pinlabel $\gamma'$ at 0 26
	\pinlabel $\searrow$ at 7 18
	\endlabellist
	\centering{\includegraphics{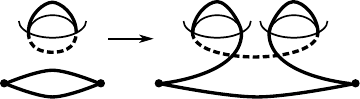}}
	\caption{Turning a multislide pair candidate into two shift pair candidates. The dots are swallowtails and the bold arcs are cusps.}\label{shiftcand} 
\end{figure} 

Now suppose it is not the case that $m_1$ and $m_1'$ are connected by a cusp arc, so that the cusp merge path $p_{cm}$ connects cusps $\chi_1',\chi_2'$ that are adjacent to the cusp points $\chi_1$ and $\chi_2$ (respectively) emanating from $m_1$ (left side of Figure \ref{shiftcand2}). Modify this as above by performing a self connect sum in $\crit\alpha$ according to a cusp merge path between, say, $\chi_1$ and $\chi_2'$. This also results in a pair of shift pair candidates, and this concludes the list of cases for how the cusp arcs containing $m_1$ and $m_1'$ lie within the fold locus for $t\in[t_1,t_1+2\epsilon]$.
\begin{figure}\capstart
	\centering{\includegraphics{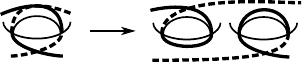}}
	\caption{Taking a connect sum between points in the two solid cusp arcs produces a pair of shift pair candidates.}\label{shiftcand2} 
\end{figure}

We may now assume without loss of generality that the pair of merge points under consideration, $m_1$ and $m_1'$, form a shift pair candidate, and now is a good time to state the notation, using when possible the labels from Figure \ref{shiftdef}. There is a fold merge at $m_1\in M_{t_1}$ between fold arcs $\varphi_k$ and $\varphi_l$ forming two cusp points $\chi_1$ and $\chi_2$, and a cusp merge at $m_1'\in M_{t_1+2\epsilon}$ according to an arbitrary cusp merge path $p_{cm}\co[0,1]\to M_{t_1+\epsilon}$ which travels from $\chi_1$ to $\chi_2'$, where $\chi_2$ and $\chi_2'$ bound $\tilde\varphi_k$. Finally,  let $p_{fm}\co[0,1]\to M_{t_1-\epsilon}$ be the fold merge path for $m_1$, oriented so that $p_{fm}(1)\in\varphi_k$. The rest of the proof applies to any such pair of merge points, so it suffices to consider only this pair.

By definition, the shift pair candidate generally fails to be a shift pair because the image of the fold arc $\tilde\varphi_k$ (and possibly the cusp merge path $p_{cm}$) may cross itself or other fold arcs. The list below summarizes an algorithm to address these issues. The algorithm produces many shift pair candidates, successively more similar to a sequence of shift pairs. At each step, there are fold arcs, cusps, and merge paths that play analogous roles to those in previous steps. The following argument refers to all of these objects using the labels from Figure \ref{shiftdef} to streamline the notation, but is careful to keep track of necessary distinctions. The result is that $m_1$ is the first merge point in a sequence of local shift deformations interspersed with many 2-parameter crossings. 

\begin{enumerate}
\item[(\hyperlink{genusitem1f}{1})]\hypertarget{genusitem1} By a homotopy of $\alpha$, shorten $p_{cm}$ so that the cusp $\chi_2'$ between $\varphi_1$ and $\varphi_k$ lies close to $\chi_1$ (that is, pointing into the same region of regular values) as soon as it appears. This causes the new $\tilde\varphi_k$ to possibly have fold crossings and causes the new $p_{cm}$ to be embedded and disjoint from the critical image of $\alpha$.
\item[(\hyperlink{genusitem2f}{2})]\hypertarget{genusitem2}Break the deformation into a sequence of shift pair candidates that do not have self-crossings in their respective $\tilde\varphi_k$ fold arcs, but may have fold crossings between their respective $\tilde\varphi_k$ and other fold arcs. Do this in a particular way to allow item (4) below. 
\item[(\hyperlink{genusitem3f}{3})]\hypertarget{genusitem3}For a shift pair candidate resulting from item (2), modify $\alpha$ by a homotopy so that $\tilde\varphi_k$ is free of all fold crossings, converting it to a shift pair.
\item[(\hyperlink{genusitem4f}{4})]\hypertarget{genusitem4}Immediately after the sequence of shift pairs from item (3), perform a sequence of 2-parameter crossings to return $\alpha$ to the endpoint of the original shift pair candidate deformation.\end{enumerate}

\hypertarget{genusitem1f}Item (\hyperlink{genusitem1}{1}) is an extension of the modification involving $D$ and $H_{s,t}$ above. Alternatively, but less rigorously, one may view it as pushing $\chi_2'$ along most of the reverse of $p_{cm}$ before $t_0$, so that $\chi_1$ is pointing into the same region of regular values as $\chi_2'$ as soon as it appears. Recall that the construction of the shift pair candidate resulted in a cusp merge in which $\chi_1$ traveled along a cusp merge path $p_{cm}$ toward the stationary cusp $\chi_2'$ (recall the labels from Figure \ref{shiftdef}). Insert a detour supported near $p_{cm}$ in which $\chi_2'$ travels along the reverse of $p_{cm}$ (as specified by the framing of $p_{cm}$ as a joining curve), then returns to where it started. Now delay the return of $\chi_2'$ so that $\chi_1$ trails it closely during its own traversal of $p_{cm}$. The part of $p_{cm}$ that remains between the two cusps as they move together along $p_{cm}$ is a one-parameter family of joining curves, so it traces out a disk in $M_{[0,1]}$ that serves an analogous role to $D$ above, to which the deformation $H_{s,t}$ may be applied. The fold merge at $m_1$ now produces cusps $\chi_1,\chi_2$ that point into the same region of regular points as $\chi_2'$. The cusp-fold crossings originally undergone by $\chi_1$ are now cusp-fold crossings undergone by $\chi_2'$ at $t$-values less than that of $m_1$.

\hypertarget{genusitem2f}Item (\hyperlink{genusitem2}{2}) involves inserting a flip near each self-intersection of $\tilde\varphi_k$, and using its two cusps to break a candidate into three candidates, thereby breaking an immersed $\tilde\varphi_k$ into a collection of fold arcs, each serving as an embedded $\tilde\varphi_k$ for a member of a sequence of shift candidates. This addresses self-crossings in $\tilde\varphi_k$; the remaining crossings are treated in item (\hyperlink{genusitem3}{3}). Because of item (\hyperlink{genusitem1}{1}), assume without loss of generality that $\alpha(p_{cm})$ is a short embedded arc of regular values in $S^2_{t_0+\epsilon}$ connecting the cusp points $\alpha(\chi_1),\alpha(\chi_2')$. To begin, modify the fibration near each self-crossing of $\tilde\varphi_k$ according to two cases that correspond to the two types of self-crossings available to a fold arc. 
These cases appear in Figure \hyperlink{30b}{30}: Each self-crossing in $\tilde\varphi_k$ receives an additional loop coming from a flip at $t=t_1-\epsilon$ at the location suggested by the figure, which later goes away by an inverse flip at $t=t_1+3\epsilon$ (as usual, one adds these flips by inserting a flip immediately followed by its inverse and extending the local model for each move forward and backward in $t$ by Lemma \ref{swalpushlem}). In either case, instead of traveling along $p_{cm}$, the cusp $\chi_1$ will first undergo cusp-fold crossings that exist by \cite[Proposition 2.7]{W2} to lie just to the higher genus side of cusp 1, following a path whose image is the dotted line in the figure, and which is transverse to the fibers and disjoint from $\crit\alpha$. Perform the detour consisting of a cusp merge between $\chi_1$ and cusp 1, followed by its inverse, so that the fold that is parallel to the dotted line in Figure \hyperlink{30b}{30} now serves as the fold arc $\tilde\varphi_k$ for the first of three candidates to be produced by the modification. 
Similarly, send $\chi_1$ along a path that follows the fold arc between cusps 1 and 2, and perform the same kind of detour with $\chi_1$ and cusp 2. This happens all along the critical arc until $\chi_1$ cusp merges with the cusp at the end of $\varphi_1$: $\chi_1$ follows along $\tilde\varphi_k$, performing cusp-then-fold-merge moves at each cusp it encounters. This adds two candidates to the deformation for each self-intersection of the original $\tilde\varphi_k$. If any flips are inserted in this item, then the fold arcs trailing behind $\chi_1$ will be different from that indicated by the short embedded path $p_{cm}$ (and this is the only difference between the ending maps, because the merges were detours). Item (\hyperlink{genusitem4}{4}) addresses this. Before that, however, item (\hyperlink{genusitem3}{3}) turns all of these candidates into shift pairs.
\begin{figure}\hypertarget{30b}\capstart%
	\labellist
	\small\hair 2pt
	\pinlabel $\varphi_1$ at 5 105
	\pinlabel $\chi_2'$ at 50 93
	\pinlabel $\tilde\varphi_k$ at 5 75
	\pinlabel $\varphi_l$ at 5 8
	\pinlabel 2 at 93 62
	\pinlabel 1 at 93 45
	\pinlabel $\chi_2$ at 102 18
	\pinlabel $\chi_1$ at 132 18
	\pinlabel $\varphi_1$ at 205 105
	\pinlabel $\chi_2'$ at 250 93
	\pinlabel $\tilde\varphi_k$ at 205 75
	\pinlabel $\varphi_l$ at 205 8
	\pinlabel $\chi_2$ at 304 18
	\pinlabel $\chi_1$ at 332 18
	\pinlabel 2 at 271 27
	\pinlabel 1 at 255 24
	\endlabellist
	\centering
	\subfloat[\ ]{\makebox[5.5cm][c]{\includegraphics{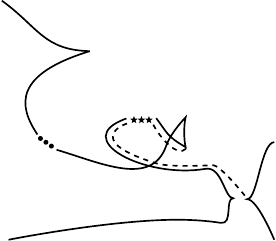}\label{badloopa}}} \hspace{1.5cm} \capstart
	\subfloat[\  ]{\makebox[5.5cm][c]{\includegraphics{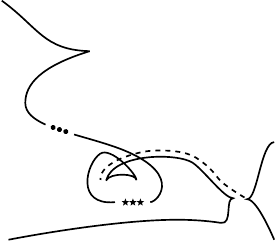}\label{badloopb}}}
	\caption*{Figures for item (\protect\hyperlink{genusitem2}{2}) in the proof of Lemma \ref{genus} giving the placement of a flip. The text of item (\protect\hyperlink{genusitem4}{4}) refers to the star ellipsis in each.}
	\subfloat[\ ]{\makebox[5.5cm][c]{\includegraphics{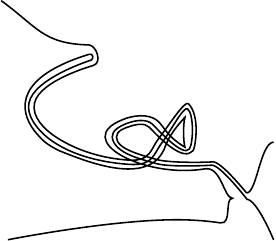}\label{notsoevila}}} \hspace{1.5cm} \capstart
	\subfloat[\  ]{\makebox[5.5cm][c]{\includegraphics{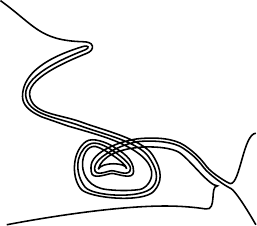}\label{notsoevilb}}}
	\caption{\label{genuspics}The fibrations which result from the added merge pairs indicated in the two figures above, assuming $\tilde\varphi_k$ had just one self intersection. Each is not the original fibration $\alpha_{t_1+2\epsilon}$; the discrepancy is addressed by item (\protect\hyperlink{genusitem4}{4}).}
\end{figure}

\hypertarget{genusitem3f} Item (\hyperlink{genusitem3}{3}) addresses the fold crossings in $\tilde\varphi_k$, which by this stage are not self-crossings. Assuming items \hyperlink{genusitem1}{1} and \hyperlink{genusitem2}{2} have been applied, the given shift pair candidate has a cusp merge path whose image closely follows $\varphi_k$ as in the dotted path of Figure \ref{shiftdef}, with the possible addition of other critical arcs crossing $\varphi_k$. In particular, item (\hyperlink{genusitem3}{3}) applies to the first and third candidates of any triplet produced in item (\hyperlink{genusitem2}{2}), while the second candidate needs no modification and is already a shift pair. During a $t$ interval before the fold merge, the idea is to push any crossings in $\varphi_k$ so that they do not that end up in $\tilde{\varphi}_k$ once the fold merge occurs, then perform the local shift deformation, then return those crossings to where they started. The merges and the movements of crossings have disjoint supports called $U_{me}$ and $U_{cr}$, respectively.

The first step is to define $U_{me}$ and $U_{cr}$. The set $U_{me}$ will be the support of the shift candidate (\emph{me} stands for \emph{merge}), while $U_{cr}$ will be the support of a homotopy described below that pushes the fold crossings away from the part of $\varphi_k$ that will become $\tilde\varphi_k$ (\emph{cr} stands for \emph{crossings}). The second step is to prove that such a homotopy exists, and that one may arrange for $U_{me}$ and $U_{cr}$ to be disjoint, so that the proposed deformation, a local generalized shift deformation occurring between some 2-parameter crossings, has the same endpoints as (and thus can be substituted for) the candidate we started with. 

To set notation, pick $\epsilon>0$ and call the merge points of the candidate $m_1\in M_{t_1}$, $m_1'\in M_{t_1+2\epsilon}$, using the same labels as in Figure \ref{shiftdef}. To precisely define $U_{me}$, it helps to refer to Figure \ref{pushed}. The initial fold merge is supported on a tubular neighborhood of the fold merge path $p_{fm}\co[0,1]\to M_{t_1-\epsilon}$ between $\varphi_k$ and $\varphi_l$, and the subsequent cusp merge is supported on a tubular neighborhood of the cusp merge path $p_{cm}\co[0,1]\to M_{t_1+\epsilon}$ from the resulting cusp on the right, $\chi_1$, to the cusp between $\varphi_1$ and $\tilde\varphi_k$ at $p_{cm}(1)$. Now, as in Remark \ref{mergepath}, $p_{fm}$ specifies a family of merge paths $\left[p_{fm}(s)\right]_{[t_1-\epsilon,t_1)}$. Thus, $\left[p_{fm}(1)\right]_{t_1-\epsilon}$ is connected to $\left[p_{cm}(0)\right]_{t_1+\epsilon}$ by the union of $\left[p_{fm}(1)\right]_{[t_1-\epsilon,t_1]}$ and a cusp arc $\gamma\subset M_{[t_1,t_1+\epsilon]}$. Denote the projection $\pi_t\co M_I\to M_t$ and the concatenation of the four paths by $P$: 
\[P=\left[p_{fm}([0,1])\right]_{t_1-\epsilon}\ast\left[p_{fm}(1)\right]_{[t_1-\epsilon,t_1]}\ast\gamma\ast\left[p_{cm}([0,1])\right]_{t_1+\epsilon}\subset M_{[t_1-\epsilon,t_1+\epsilon]}.\]
Denote a tubular neighborhood of this path in $M_{t_1}$ by $\nu\pi_{t_1}(P)$, and define \[U_{me}=\left\{(t,m)\in M_{[t_1-\epsilon,t_1+2\epsilon]}:\pi_{t_1}(m)\in\nu\pi_{t_1}(P)\right\}.\]This is essentially the product of $[t_1-\epsilon,t_1+\epsilon]$ with a neighborhood of $\pi_{t_1}(\gamma)$, slightly extended to contain part of $\varphi_l$.

\begin{figure}\capstart
	\labellist
	\small\hair 2pt
	\pinlabel $\varphi_1$ at 3 120
	\pinlabel $\varphi_k$ at 163 50
	\pinlabel $\varphi_l$ at 163 5
	\endlabellist
	\centering{\includegraphics{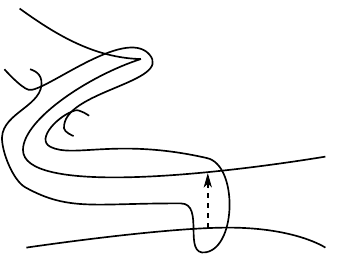}}
	\caption{Applying item (\protect\hyperlink{genusitem3}{3}) to push two fold crossings off the relevant part of $\varphi_k$. The dotted line is $\alpha(p_{fm})$.}\label{pushed} 
\end{figure}

To define $U_{cr}$, it is necessary to describe the deformation of which it is the support. Begin by traveling downward along $\varphi_k$ away from the cusp $\chi_2'$ in Figure \ref{pushed}. Consider the first fold crossing one encounters between $\varphi_k$ and some fold arc $A$. If $\chi_2'$ is not on the lower-genus side of $A$, move on to the next crossing. Otherwise, push the crossing between $A$ and $\varphi_k$ upward along $\varphi_k$ past $\chi_2'$. More precisely, there is a deformation as in Figure \ref{tinypair} in which $A$ is the vertical fold arc and the fold arcs $\varphi_1,\varphi_k$ are respectively to the left and right of the cusp, in effect pushing the crossing off what will be $\tilde\varphi_k$ to lie in $\varphi_1$. The initial cusp-fold crossing exists by \cite[Proposition 2.7]{W2} and the subsequent $R_2$ deformation exists by \cite[Proposition 2.5(3)]{W2}. The intersection point has now been moved to lie in $\varphi_1$, as shown in Figure \ref{pushed}. This procedure can be repeated for all crossings oriented like $A$: Use  \cite[Proposition 2.5(1a)]{W2} to perform finger moves across any other arcs that might intersect $\varphi_k$, and then use \cite[Proposition 2.9]{W2} to perform $R_3$ deformations, to move each crossing up over the cusp. Now move the remaining crossings (for which $\chi_2'$ is on their higher-genus sides) downward away from $\chi_2'$, just past $\alpha(p_{fm}(1))$, then perform finger moves of the offending fold arcs downward along the image of $\alpha(p_{fm})$ to cross $\alpha(p_{fm}(0))$. An illustration of what would result from pushing the one of each of the two types of intersections out of the way appears in Figure \ref{pushed}. The support of the finger move pushing a fold arc down to intersect $\varphi_l$ is a neighborhood of a pair of disks which can be chosen disjoint from a neighborhood of $p_{fm}$, and the disjointness condition of \cite[Definition 2.1]{W2} and \cite[Theorem 2.4]{W2} implies that the support of the rest of this deformation is disjoint from the part of $\varphi_k$ whose image runs parallel to $\alpha(p_{cm})$. For this reason, it is possible to arrange for Condition (\hyperlink{shiftcon[ii]}{ii}) in the definition of local shift deformation by first pushing off the intersections, then performing the pair of merge moves, and then reversing the deformation that pushed off the intersections. Once this is achieved, Condition (\hyperlink{shiftcon[i]}{i}) is verified by simply examining the base diagrams in the image of $U_{me}$ just after pushing off the intersections. In this way, the initial shift candidate is broken into a sequence of local shift deformations.

\hypertarget{genusitem4f}The last step, item (\hyperlink{genusitem4}{4}), takes as input the fibration that results from the sequence of local shift deformations coming from Item (\hyperlink{genusitem3}{3}) and sends the pair of fold arcs that trailed behind $\chi_1$ (which may have many loops as in Figures \hyperlink{30b}{30c} and \hyperlink{30b}{30d}) back to coincide with the fold arcs that trailed behind $\chi_1$ at the end of item (\hyperlink{genusitem1}{1}) (which follow a crossing-free path from where the initial fold merge occurred to where the final cusp merge occurred). Once this item is applied, the fibration structure just after the final cusp merge agrees with the fibration structure at the analogous $t$ value before the algorithm began (that is, immediately after the shift candidate). The algorithm is defined to begin with the same fibration as the beginning of the given shift candidate, so the result can be substituted into $\alpha$ to satisfy the requirements of Lemma \ref{genus}.

Consider $\tilde\varphi_k$, before the flips are introduced by item \hyperlink{genusitem2}{2}, as a path oriented from $\chi_2$ to $\chi_2'$. The two fold points that map to the first self-intersection of $\tilde\varphi_k$ bound a fold arc that maps to an oriented based loop; call it $\ell\co[0,1]\to S^2$. Within that loop, the first self-intersection to occur in $\ell|_{(0,1)}$ singles out a smaller loop within $\ell$, and so on, until one reaches an innermost loop. Item \hyperlink{genusitem3}{3} puts a flip loop nearby, for which the picture is just like Figures \ref{badloopa} and \hyperlink{30b}{30b}, though the fold arc is not interrupted by a star ellipsis.

Note that the flips suggested by Figure \hyperlink{30b}{30} double the number of crossings in $\tilde\varphi_k$, so that the new path taken by the image of $\chi_1$ in $S^2$ along the original $\tilde\varphi_k$,  plus the flip loops from item (\hyperlink{genusitem2}{2}), is isotopic to the original short embedded path $p_{cm}$ from $\chi_1$ to $\chi_2'$ resulting from item (\hyperlink{genusitem1}{1}) via a sequence of only Reidemeister-II and Reidemeister-III moves, a claim which this paragraph serves to establish. If an innermost loop is as in Figure \hyperlink{30b}{30b}, Figure \ref{badloopbslip} illustrates a way to remove its self-intersections using a trick that is reminiscent of the stabilization deformation of Figure \ref{stabdef} (the third entry in Figure \ref{badloopbslip} is the result of pushing a proper subinterval of the short arc between the two intersections around the back of $S^2$). The 2-parameter crossings indicated by the figure all exist by \cite[Proposition 2.11]{W2}: The strip between the two parallel fold arcs trailing behind $\chi_1$ is on the higher-genus side of each, with vanishing cycles on its two sides that correspond to canceling Morse critical points, so if one fold arc is able to initially undergo a 2-parameter crossing, then the next is able to follow behind in a deformation supported in the same ball of regular points in $M$ that supported the movement of the first: That ball is a neighborhood of a one-parameter family of fold merge paths from one side of the strip to the other. For this reason, it suffices to establish the movements for the leading fold arc. For the first arrow in Figure \ref{badloopbslip}, use \cite[Propositions 2.5(1a), 2.9, and 2.5(3)]{W2}, respectively. For the second arrow, use \cite[Proposition 2.11]{W2}. For the third, use \cite[Proposition 2.5(3)]{W2} twice. If an innermost loop is as in Figure \ref{badloopa}, the same argument applies, without the need to sweep the pair of fold arcs around the back of $S^2$ (see Figure \hyperlink{30b}{30c}): just use \cite[Proposition 2.5(3)]{W2} to contract the bigon.

\begin{figure}\capstart%
	\centering{\includegraphics{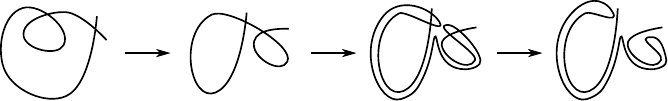}}
	\caption{A schematic for resolving intersections coming from Figure \protect\hyperlink{30b}{30b}. Keep in mind each arc represents two parallel fold arcs, oriented so that the strip between them lies on the higher-genus side of each. This movement occurs in the interval between one candidate pair and the next.}\label{badloopbslip}
\end{figure}

Once these movements are applied to the innermost loop, they apply to the next innermost loop, and so on inductively until the two parallel fold arcs are free of crossings. Further move the pair of arcs so that their image coincides with the image of the pair of arcs as they appeared at the end of item \hyperlink{genusitem1}{1}: a short, embedded path between the points previously occupied by $\chi_1$ and $\chi_2'$. Now the deformation consists of a sequence of local generalized shift deformations instead of the single shift candidate given in item \hyperlink{genusitem1}{1}, followed by a sequence of 2-parameter crossings. Because the cusp and fold merges that occurred between the first and last merge points were constructed as detours, the result is a fibration that agrees with the ending of the candidate yielded by Item \hyperlink{genusitem1}{1}, except that even though the pair of fold arcs that trailed behind $\chi_1$ have the same image as the original pair resulting from item \hyperlink{genusitem1}{1}, they may not be isotopic to the original pair in $M$ (relative to their endpoints). This is easily rectified by adding one more local multislide deformation whose initial fold merge is between these fold arcs.
\end{proof}
 
\subsection{Modification of the stratified immersion of the critical manifold}\label{immer}
At this point, $\alpha$ has some of the characteristics of the required deformation: the Morse function $T\co\crit\alpha\to I$ and the part of the cusp locus containing its critical points are as required; however, the base diagrams can still appear disorganized because of self-intersections in the critical image. The object of this section is to modify the stratified immersion $\crit\alpha\rightarrow S^2_I$ until the remaining double points all come from the model deformations for the moves in Theorem \ref{T}.

\subsubsection{Immersion loci of deformations}\label{cbd}
It will be necessary to collect some facts before beginning the main argument of this section. To summarize the results of Section \ref{mod}, $\crit\alpha_t$ is now a single immersed circle that, as $t$ progresses, can momentarily split in two at merge pairs. The only restrictions on $\iota$ at this point in the paper come from Definitions \ref{mspairdf} and \ref{shiftpairdf}: For a multislide pair, the two cusps involved in the deformation sweep out a circle in $\crit\alpha$ that is required to be disjoint from $\iota$; for shift pairs, the same circle in the critical twice-punctured torus is disjoint from $\iota$: The cusp $\chi_1$ and the fold arc $\tilde{\varphi}_k$ from Figure \ref{shiftdef} remain disjoint from $\iota$ for those $t$-values at which $\chi_1$ exists. One final way to describe it: Neglecting the cusp locus, one could specify $(\crit\alpha,\iota)$ as coming from a cylinder $S^1_{[0,1]}$ decorated with circles coming from $\iota$ and pairs of points $(p_i,q_i)\in S^1_{t_i}$, in the complement of these circles, at which the cylinder undergoes a self connect sum, with each $t_i$ corresponding to a single merge pair.

To avoid clutter in the exposition, it will be convenient to include swallowtail points in the immersion locus throughout the rest of the paper.

\begin{lemma}\label{circlepairs} For deformations whose critical locus is connected at each value of $t$, an immersion circle $c$ is one of a pair of circles which are mapped to each other by $\alpha$ if and only if $c$ is free of swallowtail points. In this situation, $c$ and any subarc $s\subset c$ such that $\partial s$ is a self-crossing are nullhomotopic in $\crit\alpha$.\end{lemma}
\begin{proof}\ 

$(\Rightarrow)$: if $c$ contained a swallowtail point, one could choose a point on $c$ and mark its counterpart on the other circle. One could then follow the one-parameter family of double points along the immersion arcs until one of them enters a neighborhood of the swallowtail point as in Figure \ref{flip}, and here we come across a one-parameter family of triple points (or higher), the third arc coming from the other side of the immersion arc passing through the swallowtail point, which cannot be simplified by a small perturbation. This is a contradiction because in a deformation the only triple points are those coming from Reidemeister type three moves, which result in isolated triple points. Thus, $c$ is free of swallowtail points.

$(\Leftarrow)$: By continuity and the local models for fold crossings in \cite[Section 2]{GK1}, if $c$ is not one of a pair of circles with common $\alpha$-image then it is a single circle mapped by $\alpha$ in a two-to-one fashion outside of a finite subset of points where a small loop in the critical image forms or collapses, which necessarily occurs at a swallowtail point. 

For the last statement, let $p$ denote the $T$-minimal point of $s$ or $c$. For a contradiction, restrict attention to a minimal subarc $\tilde{s}$ containing $p$ and violating the lemma. Choose $p_1\in\tilde{s}$ with $T(p_1)=t_1$ such that $x=\alpha(p_1)$ is one corner of the bigon formed in the critical image immediately after the Reidemeister-II fold crossing or cusp-fold crossing corresponding to $p$. For $p_1\in\crit\alpha_{t_1}$, denote by $\pi_{t_1}$ the projection of $\tilde{s}\subset\crit\alpha=[0,1]\times S^1$ to $\crit\alpha_{t_1}$. This is a degree-1 map $S^1\to S^1$. In particular, the image of $\pi_{t_1}$ contains a short fold arc $\gamma$ containing $p_1$ and the image of $\alpha\circ\pi_{t_1}$ contains a short arc in the critical image transverse to $\alpha(\gamma)$ at $x$. But this implies $\tilde{s}$ contains the counterpart of some $p_2\in\tilde{s}$, contradicting the fact that $\tilde{s}$ is contained in a pair of circles with common $\alpha$ image.
\end{proof}

\begin{df}Lemma \ref{circlepairs} suggests the terminology \emph{immersion pair} for a pair of circles in the critical locus which have the same image, and \emph{immersion single} for a circle of immersion points containing at least one swallowtail point.\end{df}

It is time to further classify immersion circles and the critical points in their complement.

\begin{df}Each point in $p\in\crit\alpha\setminus\iota$ has an associated genus as follows. Choose a regular fiber $F_p$ just to the higher genus side of $\alpha(p)$. Define \emph{the genus at $p$} as the genus of $F_p$.\end{df}
\begin{df}\label{gdgidef}Fix a short path in $\crit\alpha$, parameterized by $t$, that transversely crosses an immersion circle at the immersion circle's minimal $t$-value. Where the path crosses the immersion circle, the genus of the regular fiber increases or decreases by 1.\begin{itemize}\item If the genus decreases, call the circle \emph{genus-decreasing}. If it increases, call it \emph{genus-increasing}.\item Immersion pairs composed of genus-increasing (resp. decreasing) circles are called \emph{genus-increasing pairs} (resp. \emph{genus-decreasing pairs}). \item If the genus increases at one member of an immersion pair but decreases at the other, call the pair \emph{mixed}.\item Disregarding the global aspects of immersion circles, there are notions of genus-increasing, genus-decreasing, and mixed $R_2$ deformations in which two fold crossings form with increasing $t$. Similarly, there are mixed and genus-increasing cusp-fold crossings. This explains the modifiers \emph{genus-increasing, genus-decreasing}, or \emph{mixed} for an $R_2$ deformation that forms a pair of fold crossings.\end{itemize}\end{df}

\begin{rmk}\label{tracingorientations}Some examples of this terminology: The Reidemeister-II fold crossing that initiates a handleslide deformation is genus-decreasing, and the disk bounded by the loop in the bottom right of Figure \ref{swalpush} lies on the higher-genus side of the immersion arc that forms its boundary. In other words, the loop appears by a genus-increasing $R_2$ deformation.

On a related note, it will be useful to \emph{trace orientations} near any crossing triple in $\iota$ as follows. Suppose in Figure \hyperlink{16b}{16c} that $a$ is oriented so that the genus is decreasing as one crosses $a$ in the increasing $t$ direction. This actually comes from the orientation of the strip of fold points passing through the page along $a$, so that the genus decreases as one travels along $b$ across $a$ with increasing $t$. The orientation of this strip also causes the genus to decrease traveling along $b'$, so that $c'$ gets an orientation: The genus decreases traveling across $c'$ with increasing $t$.\end{rmk}

\subsubsection{Unlinking $\iota$}\label{unlink}
 Here begins the process to simplify $\iota$. The first step is to reduce to considering a single immersed critical cylinder, rather than a pair of cylinders or a higher-genus surface, by \emph{unlinking} $\iota$ from the merge pairs, to be isotoped off the small intervals of $t$ they bound.
\begin{lemma}[Ambient isotopy lemma]\label{isotopecrit}For $s\in[0,1]$ choose an ambient isotopy $i^{S^2}_s$ of $S^2_I$ that does not introduce or eliminate tangencies between the image of the fold or cusp locus and the level sets $S^2_t$. Let $C\subset S^2_I$ be the stratified critical image of $\alpha$. Then there is a homotopy $\alpha_s$ of the deformation $\alpha$ such that $\alpha_s(\crit\alpha_s)=i^{S^2}_s(C)$.\end{lemma}
\begin{proof}Give $i^{S^2}_s$ the coordinates $i^{S^2}_s(t,x)=(t_s,x_s)$. The map $i_s^{S^2}\circ\alpha$ is a map whose critical image is $i^{S^2}_s(C)$. Define the ambient isotopy $i_s^M$ of $M_I$ that changes the $t$-coordinate of a point the same way $i_s$ changes the $t$-coordinate of its $\alpha$-image: $i^M_s(t,m)=(t_s,m)$. For each $s$, the tangency condition ensures all tangencies between the critical image of $i_s^{S^2}\circ\alpha$ and level sets $S^2_t$ occur at merge points (recall births have been eliminated by now), and that all critical points have the local models required by deformations, so it only remains to make sure the maps $\alpha_s$ are the identity on the $t$ coordinate. For this reason, $\alpha_s=i^{S^2}_s\circ\alpha\circ(i^M_s)^{-1}$ is the required deformation. \end{proof}

Here follows a particular application of Lemma \ref{isotopecrit}. It appears explicitly because of its fundamental nature, and to illustrate the seemingly nontrivial modifications to sequences of base diagrams such an isotopy can produce.

\begin{figure}\hypertarget{33b}\capstart%
  	\centering
  	\subfloat[A basic simplifying move in $\iota$. The arcs' counterparts have the same appearance.]{%
  	\begin{minipage}[c][3.4cm]{0.4\textwidth}\centering%
  	\label{simplecancelfig}\includegraphics{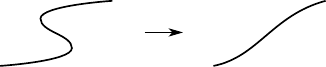}
  	\end{minipage}}\qquad
  	\subfloat[The critical image in $S^2_I$ before simplifying.]{%
  	\begin{minipage}[c][3.4cm]{0.4\textwidth}\centering%
  	\label{tinypairins2}\includegraphics{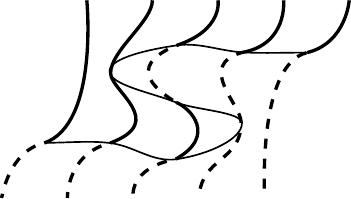}
  	\end{minipage}}
  	\caption{Depictions of a certain behavior of $\iota$ for Corollary \ref{simplecancelcor}. In both images, the page is part of $\crit\alpha$ or its image.}\label{simplecancelcorfigs}
  \end{figure}
\begin{cor}\label{simplecancelcor}Pairs of critical points of $T|_\iota$ as in the left side of Figure \ref{simplecancelfig} (in which no other immersion arcs or critical points are allowed to be present in the pictured region of $\crit\alpha$) can be eliminated by a homotopy of $\alpha$ to obtain the right side of the figure. The same modification is available to the vertical and horizontal reflections of the figure.\end{cor}
\begin{proof}Up to reflections of Figure \ref{simplecancelfig}, the deformation corresponding to Figure \ref{simplecancelfig} has base diagrams given by Figure \ref{tinypair}, with the cusp replaced by a fold point. The required homotopy of $\alpha$ is realized by an ambient isotopy in $S^2_I$ that gives a one-parameter family of deformations (by Lemma \ref{isotopecrit}) in which the $t$-value of the first $R_2$ deformation (that is, the $t$-value at which the images of the fold arcs become tangent) approaches that of the second $R_2$ deformation; see Figure \hyperlink{33b}{33b} for a depiction of the critical image of the deformation. In that figure, one of the fold arcs sweeps out the plane given by the page, while the other fold arc sweeps out a surface with stripes given by five $T$-level sets in $\crit\alpha$. The stripes become dotted lines where they dive below the page. The remaining S-shaped curve in Figure \hyperlink{33b}{33b} is where they intersect.\end{proof}

\begin{figure}\capstart
	\centering
	\subfloat[\ ]{\label{tinypairs1} \includegraphics{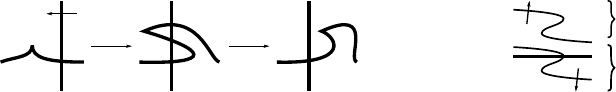}}\\
	\subfloat[\ ]{\label{tinypairs2} \includegraphics{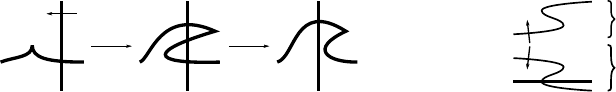}}
	\caption{Base diagrams for the two versions of Figure \ref{tinypair}, with corresponding decorated critical sets to the right. As is common with these pictures, each bracketed piece can be flipped vertically (also flipping its arrow), and each pair of bracketed pieces can switch places, and still represent the same deformation. The arrows give the correspondence between orientations in the pictures, and may be all marked ``1" or all marked ``2" as in \cite{GK1}.}\label{tinypairs}
\end{figure}
Here is a similar application that finds immediate use; its proof is essentially the same as that of Corollary \ref{simplecancelcor}. Figure \ref{tinypairs} depicts the two ways in which a cusp arc may cross an immersion arc. In that figure, either all arrows point into the lower-genus side of each fold or immersion arc, or they all point into the higher-genus side of each fold or immersion arc. Note that Figure \ref{tinypairs1} consists of mixed $R_2$ deformations and Figure \ref{tinypairs2} consists of either genus-increasing $R_2$ deformations or genus-decreasing $R_2$ deformations (according to the interpretation of the arrows). 
\begin{cor}\label{tinypairscor}By Proposition \ref{isotopecrit}, one may convert between Figures \ref{tinypairs1}  and \ref{tinypairs2} by a homotopy of $\alpha$.\end{cor}

Before adding to the list of moves on deformations, note there is generally not a Reidemeister-II move between a given cusp arc and immersion arc. For example, the single coming from a stabilization deformation has a particular form laid out in Section \ref{stabdefpf}, and canceling the fold crossings coming from two flips using some other $R_2$ deformation does not necessarily yield a stabilization move, as detailed in the proof of Lemma \ref{pseudostabilizations}: It could yield a stabilization occurring within a sequence of handleslides. Nevertheless, immersion arcs enjoy some freedom of movement.

\begin{lemma}\label{reidcusp}Immersion arcs can move around cusp arcs in at least three ways:\begin{enumerate}

\item[(\hyperlink{reidcusp(1b)pf}{1a})]\hypertarget{reidcusp(1a)}One may perform a finger move between an immersion arc and a parallel cusp arc on its lower-genus side, introducing two intersection points between them.
\item[(\hyperlink{reidcusp(1b)pf}{1b})]\hypertarget{reidcusp(1b)}Suppose there is a lune $L$ consisting of fold points in $\crit\alpha$ such that:\begin{itemize}
\item One side of $L$ is a cusp arc $C$.
\item The other side of $L$ is an immersion arc $A$.
\item Recall that $\chi$ denotes the cusp locus of $\alpha$. Then $L\cap(\iota\cup\chi)=A\cup C$.\end{itemize}
Then, possibly after an application of Corollary \ref{tinypairscor}, there is a homotopy of $\alpha$ whose endpoints appear in Figure \ref{reidcusp(1)fig}.
\item[(\hyperlink{reidcusp(2)pf}{2})]\hypertarget{reidcusp(2)} Suppose there is a triangle $\Delta$ in $\crit\alpha$ such that two sides are immersion arcs $A$ and $B$, the third side is a cusp arc $C$, and $\Delta\cap(\iota\cup\chi)=A\cup B\cup C$. Then there is a homotopy of $\alpha$ realizing a Reidemeister-III type move as indicated by $\Delta$.
\end{enumerate}\end{lemma}

\begin{figure}\capstart
	\labellist
	\small\hair 2pt
	\pinlabel $A$ at -3 26
	\pinlabel $C$ at -3 8
	\pinlabel $L$ at 28 17
	\endlabellist
	\centering
	\subfloat[Moving an immersion arc $A$ past a cusp arc $C$ in Lemma \ref{reidcusp}(\protect\hyperlink{reidcusp(1b)}{1b}). The lower side of each immersion arc is its lower-genus side. A similar pinching movement occurs with the counterpart of $A$, producing a new immersion pair.]{%
		\begin{minipage}[c][2.1cm]{0.75\textwidth}\centering%
		\label{reidcusp(1)fig} \includegraphics{reidcusp(1)fig}
		\end{minipage}}\\
	\labellist
	\small\hair 2pt
	\pinlabel $A$ at 7 23
	\pinlabel $B$ at 36 23
	\pinlabel $C$ at 21.5 4
	\endlabellist
	\subfloat[An example deformation corresponding to $\Delta$ in Lemma \ref{reidcusp}(\protect\hyperlink{reidcusp(2)}{2}).]{\label{reidcusp(2)fig} \includegraphics{reidcusp(2)fig}}
	\caption{Figures for Lemma \ref{reidcusp}.}\label{reidcuspfig}
\end{figure}
\begin{proof}\hypertarget{reidcusp(1a)pf} For item (\hyperlink{reidcusp(1a)}{1a}), the modification is simply a detour in which the cusp moves into the higher-genus side of a nearby crossing as in Figure \ref{tinypairs}, then returns.

\hypertarget{reidcusp(1b)pf} For item (\hyperlink{reidcusp(1b)}{1b}), the base diagrams of Figure \ref{tinypairs1}, followed by the reverse, is the sequence of base diagrams corresponding to the left side of Figure \ref{reidcusp(1)fig}. The proposed homotopy would cancel the pair of cusp-fold crossings that occur, leaving behind an immersion pair. It is not hard to verify the claim: The pair of cusp-fold crossings is a homotopy supported in a 4-ball neighborhood of the cusp, and the proposed homotopy of $\alpha$ consists of retracting that neighborhood until its image in $S^2$ is disjoint from the fold arc it crosses.

\hypertarget{reidcusp(2)pf} For item (\hyperlink{reidcusp(2)}{2}), the deformation corresponding to the triangle $\Delta\subset\crit\alpha$ begins with a triangle in the critical image, consisting of arcs $A$, $B$ and $C$, with a cusp lying on side $C$ (see Figure \ref{reidcusp(2)fig} for one such configuration). The cusp crosses side $A$ out of the triangle, then there is a Reidemeister-III fold crossing, and then the cusp crosses fold arc $B$ back into the triangle. The proposed modification would result in the cusp crossing side $B$ first, then side $A$. The midpoint of the proposed homotopy would have the cusp crossing $A$ and $B$ simultaneously at their own point of intersection. The disjointness of the vanishing set of $C$ from those of $A$ and $B$ that is required for the existence of these two deformations (and similarly for the intervening deformations) follows from the existence of all the intersections in the initial deformation (in verifying this, it helps to reverse $t$ if necessary so that the cusp in the initial base diagram points into the triangle like in Figure \ref{reidcusp(2)fig}).\end{proof}

\begin{lemma}\label{unlinklemma}For a deformation $\alpha$ resulting from Lemma \ref{genus} (that is, all critical points of $T$ are contained in merge pairs), there is a modification which causes $\alpha_t$ to be injective on its critical locus at those values of $t$ for which $\crit\alpha_t$ has two components.\end{lemma}
Another way to state the lemma is to say one can move $\iota$ into the critical cylinders between one merge pair and the next in the decorated critical surface of $\alpha$. Suppose at the $t$-values $t_0<t_1$ lie the first merge pair that has the pair of cylinders $\crit\alpha_{(t_0,t_1)}$ immersed. The main issue is that $\iota$ may have \emph{linked immersion circles}: a circle is linked if it is not freely homotopic to one that lies outside of $M_{[t_0,t_1]}$ by a homotopy that does not decrease the $t$-value of any point in the circle. In other words, treating an immersion circle as simply a circle in a suface, one cannot homotope it forward in $t$ past the merge pair. The following sublemma arranges $\iota$ so that its circles are embedded approaching $t_0$ as in Figure \ref{nest}, resulting in a collection of embedded, disjoint arcs which appear to be linked: call these \emph{linked arcs}. Lemma \ref{unlinklemma} is essentially proved by converting the collection of linked arcs to be a collection of genus-decreasing linked arcs, then momentarily canceling them over an interval containing $[t_0,t_1]$, thereby including the merge pair into a higher-genus deformation with embedded critical image.

\begin{figure}\capstart
	\centering{\includegraphics{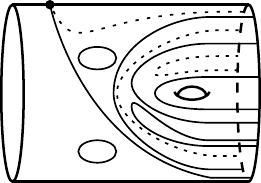}}
	\caption{An application of Sublemma \ref{nestsublemma} results in embedded $\iota_{[t',t_1]}\subset\crit\alpha$.}\label{nest}
\end{figure} 
The following sublemma assumes the existence of a certain $t$ value $t'$. If the fold merge at $t_0$ turns out to be the first fold merge in $\alpha$, set $t'=0$. Otherwise, $t'$ may as well lie just after the previous merge pair, between which $\crit\alpha$ is embedded by hypothesis.
\begin{sublemma}\label{nestsublemma}Suppose that at the $t$-values $t_0<t_1$ lie the first merge pair such that $\iota_{[t_1,t_1]}\neq\emptyset$. Let $0\leq t'<t_0$ be a $t$-value such that $M_{[t',t_0)}$ contains no merge points and $\iota_{t'}=\emptyset$. Then there is a homotopy of $\alpha$ that causes $\iota_{[t',t_1]}$ to be embedded in $\crit\alpha$.\end{sublemma}
Another way to state the conclusion of Lemma \ref{nestsublemma} is to say it is possible to eliminate all $R_3$ deformations from $\alpha_{[t',t_1]}$ by homotopy.
\begin{proof}This is merely an application of Lemma \ref{isotopecrit} in which all $R_3$ deformations in $[t',t_1]$ are pushed forward of $t_1$, using the reverse of the homotopy that appeared in  Corollary \ref{simplecancelcor} where necessary to arrange for intersecting immersion arcs to travel along the same cylinder in $\crit\alpha_{[t_0,t_1]}$. The fact that this can be achieved by ambient isotopy as required by that lemma follows from the restrictions on $\iota$ coming from the definitions of shift and multislide pairs. Consider the description in the last sentence of the first paragraph of Section \ref{cbd} (``One final way..."): The homotopy is one in which $\iota$ slides along the cylinder, perhaps with crossings getting hung up on points $p_i,q_i$, at which instances of Corollary \ref{simplecancelcor} occur.\end{proof}

\begin{proof}[Proof of Lemma \ref{unlinklemma}]Keep in mind that the terms \emph{pair} and \emph{single} in this proof are relative to $\iota_{\{t',t_1\}}$. What appears to be an immersion pair in $\iota_{[t',t_1]}$ may turn out to be connected in $\iota_{[0,1]}$, for example. The proof proceeds in the following steps, each step starting with the immersion locus near a merge pair as given by Sublemma \ref{nestsublemma} and the previous steps, and finishing with a progressively refined immersion locus near the merge pair that still satisfies the conclusion of Sublemma \ref{nestsublemma}:
\begin{enumerate}\label{unlinkproofsteps}
\item[(\hyperlink{unlinklemmastep1}{1})]\hypertarget{unlinklemma1}Convert all mixed immersion pairs into genus-increasing pairs, adding a genus-increasing single each time.
\item[(\hyperlink{unlinklemmastep2}{2})]\hypertarget{unlinklemma2}If the first immersion arc is genus-increasing (even if it is not linked), convert it to be part of a genus-decreasing pair.
\item[(\hyperlink{unlinklemmastep3}{3})]\hypertarget{unlinklemma3} Convert all subsequent immersion singles into genus-decreasing pairs.
\item[(\hyperlink{unlinklemmastep4}{4})]\hypertarget{unlinklemma4}Convert each genus-increasing immersion pair into a pair of singles and apply step 3 to each single produced.
\item[(\hyperlink{unlinklemmastep5}{5})]\hypertarget{unlinklemma5}Now $\iota_{[t',t_1]}$ is a collection of embedded genus-decreasing pairs. Replace $\alpha$ with another deformation with the same endpoints in which these pairs cancel momentarily while the merge pair occurs.
\end{enumerate}
The last step may sound strange: It amounts to inserting a detour supported away from the support of the merge pair.

\hypertarget{unlinklemmastep1} Step \hyperlink{unlinklemma1}{1}: Convert mixed pairs. To eliminate mixed immersion pairs, consider Figure \ref{elimmixed}, which gives an alternative deformation to $\alpha$ when a mixed $R_2$ deformation precedes a merge pair. Instead of performing the mixed $R_2$ deformation, perform a flip in the location indicated by the figure, then a genus-increasing $R_2$ deformation and a genus-decreasing $R_2$ deformation as shown in the next two base diagrams, producing a triangle in the critical image that can be contracted by inverse flip.
\begin{figure}\capstart
	\centering{\includegraphics{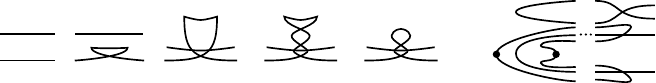}}
	\caption{Converting a mixed $R_2$ deformation for Step \protect\hyperlink{unlinklemma1}{1} of the proof of Lemma \ref{unlinklemma}. At far left, the genus decreases moving downward across each fold. The two dotted arcs are part of a genus-increasing single as suggested by the figure.}\label{elimmixed}
\end{figure}The difference between the result of this deformation and that of the mixed $R_2$ deformation is there is now a central lune, bisected by the upper fold arc. Because there were originally no $R_3$ deformations in $[t',t_1]$, there are no movements of fold arcs across this lune to introduce complications, so this deformation can be substituted for the mixed $R_2$, with the bisected lune contracting immediately after the merge pair to resume $\alpha$ as it was before the modification. The decorated critical surface appears at the right, and consists of a genus-increasing single and a genus-increasing pair. The merge pair and other $R_2$ deformations occur in the dotted omitted part, supported over regions of $S^2$ distinct from that of the base diagrams at the left. This concludes Step \hyperlink{unlinklemma1}{1}.\\

\hypertarget{unlinklemmastep2}Step \hyperlink{unlinklemma2}{2}: Convert any initial genus-increasing pair or single. This is achieved by inserting an appropriate detour. In Figure \ref{gipbd}, cusps have been sprinkled about in order to give the reader some perspective on how such a deformation would appear; indeed, these modifications are valid regardless of the placement of cusps, and what matters is the placement of the swallowtails in relation to the immersion locus. Figures \ref{gipbd1} and \hyperlink{38b}{38b} depict the initial genus-increasing $R_2$ deformation. Immediately, as depicted in Figure \hyperlink{38b}{38c}, two flips occur, followed by a pair of $R_2$ deformations to obtain Figure \hyperlink{38b}{38d}, valid by \cite[Proposition 2.5(3)]{W2}. The deformation then reverses itself back to Figure \ref{gipbd1} and $\alpha$ resumes as it did before. Deducing the correspondence between Figures \ref{gipcbd1}-\hyperlink{39b}{40b} and \ref{gipbd} is a moderately straightforward exercise.
\begin{figure}\hypertarget{38b}\capstart%
	\centering
	\subfloat[\ ]{\label{gipbd1}\includegraphics{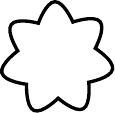}} \qquad
	\subfloat[\ ]{\label{gipbd2}\includegraphics{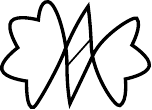}} \qquad
	\subfloat[\ ]{\label{gipbd3}\includegraphics{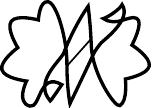}} \qquad
	\subfloat[\ ]{\label{gipbd4}\includegraphics{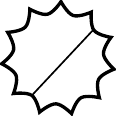}}
	\caption{Base diagrams for the detour of Figure \protect\hyperlink{39b}{40b}. The deformation proceeds a-b-c-d-c-b.}\label{gipbd}
\end{figure}
\begin{figure}\hypertarget{39b}\capstart	
	\labellist
	\small\hair 2pt
	\pinlabel $\longrightarrow$ at 125 43
	\endlabellist
	\subfloat[\ ]{\label{giscbd1}\includegraphics{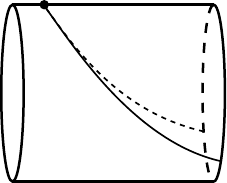}}\hspace{1cm}
	\subfloat[\ ]{\label{giscbd2}\includegraphics{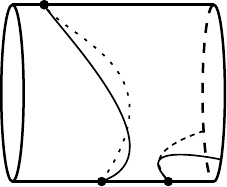}}
	\caption{Converting a single.\label{giscbd}}
	\labellist
	\small\hair 2pt
	\pinlabel $\longrightarrow$ at 125 43
	\endlabellist
	\centering
	\subfloat[\ ]{\label{gipcbd1}\includegraphics{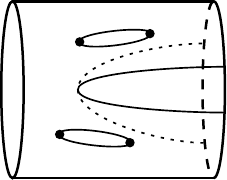}}\hspace{1cm}
	\subfloat[\ ]{\label{gipcbd2}\includegraphics{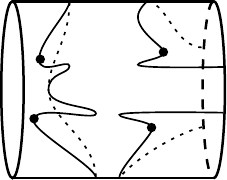}} 
	\caption{converting a pair.}
	\caption*{Decorated critical surfaces that depict the conversion of genus-increasing immersion arcs in Step \protect\hyperlink{unlinklemma2}{2} of the proof of Lemma \ref{unlinklemma}. In each case, $t'$ moves to the ``halfway point" where $\crit\alpha$ is embedded.}\label{gicbd}
\end{figure}
For a genus-increasing single, after a homotopy of $\alpha$ the single appears by a flip as in Figure \ref{giscbd1}, and the similar detour of performing a second flip and canceling the two intersections using \cite[Proposition 2.5(3)]{W2} results in the decorated critical surface in Figure \hyperlink{39b}{39b}. In either case, $t'$ is moved forward past the newly introduced generalized stabilization deformation to lie just before the genus-decreasing $R_2$ deformation. Finally, push the inverse flips forward past $t_1$. This concludes Step \hyperlink{unlinklemma2}{2}.\\

\hypertarget{unlinklemmastep3}Step \hyperlink{unlinklemma3}{3}: Convert the rest of the singles into genus-decreasing pairs. Any genus-decreasing single contains a swallowtail that corresponds to an inverse flipping move, because this is the only way for an immersion arc and its counterpart to lie in the same path component of $\iota$. Push this swallowtail forward in $t$ (along a path that is disjoint from $\iota$) past $t_1$ to cause these immersion arcs to lie within genus-decreasing pairs in $[t',t_1]$. Using Proposition \ref{isotopecrit} and Corollary \ref{simplecancelcor}, a genus-increasing single can be assumed to appear by a flipping move whose trailing immersion arcs are free of critical points of $T|\iota_{[t',t_1]}$. Apply the modification of Figure \ref{swalpush} as necessary to cause the single to be the first immersion arc to appear in $[t',t_1]$, then apply Sublemma \ref{nestsublemma} to push the resulting $R_3$ deformations past $t_1$ and apply Step \hyperlink{unlinklemma2}{2} to the single. This concludes Step \hyperlink{unlinklemma3}{3}.\\

\hypertarget{unlinklemmastep4}Step \hyperlink{unlinklemma4}{4}: Convert the genus-increasing pairs into genus-decreasing pairs. We begin with a deformation consisting of a sequence of genus-decreasing $R_2$ deformations, followed by a genus-increasing $R_2$ deformation. Label the two points in the critical image that approach each other in the genus-increasing $R_2$ deformation $p$ and $q$. Because there are no $R_3$ or mixed $R_2$ deformations for $t\in[t',t'')$, if two of these lunes intersect, then one is contained in the other (points in a newly-formed lune $L$ contained in a lune $L'$ could only move to lie outside of $L'$ by an $R_3$ deformation, a mixed $R_2$ deformation or a genus-increasing $R_2$ deformation). This nesting condition is preserved if one contracts an innermost lune using \cite[Proposition 2.5(3)]{W2}, so all lunes can be contracted to yield an SPWF $\alpha_{t''-\epsilon}$ for small $\epsilon>0$, and the following cases use such cancellations in detours.

If $p$ lies in one of the bigons formed by the genus-decreasing deformations, then the orientations of the fold arcs requires $q$ to lie in the same bigon. 

If $p$ and $q$ lie in the same side, then the genus-increasing $R_2$ deformation forms a loop in the critical image as in the left side or the right side of Figure \hyperlink{38b}{38b}: it is embedded and has exactly one crossing $x$ which is a corner of the bigon just formed. Place a flip as in Figure \hyperlink{38b}{38c} and cancel that crossing as in Figure \hyperlink{38b}{38d}, then reverse those moves, thereby inserting a detour (if there had been crossings in that side between $p$ and $q$, then they will be pairs of corners of bigons and it will be necessary to cancel them before canceling $x$, then reverse those cancellations after the detour that cancels $x$). This causes the immersion arcs corresponding to the genus-increasing $R_2$ deformation to lie within two genus-increasing immersion singles, to which Step \hyperlink{unlinklemma3}{3} applies. 

If $p$ and $q$ lie within distinct sides of their bigon, Then one of the corners of that bigon can be canceled against one of the corners of the bigon created by the genus-increasing $R_2$ deformation using \cite[Proposition 2.5(3)]{W2}. That cancellation followed by its reverse is a detour whose insertion results in the connect sum between the immersion arcs corresponding to those corners (this is an instance of Lemma \ref{r3moves}(\hyperlink{r3moves(1)}{1}) below). At this point, the immersion arcs corresponding to the genus-increasing $R_2$ deformation now lie within a genus-decreasing pair. 

If $p$ does not lie in one of the bigons formed by the genus-decreasing $R_2$ deformations, then those bigons can all be contracted after the genus-increasing $R_2$ deformation as in the first paragraph of this step. Contract enough of these to use the trick from the third paragraph (starting with the left side or the right side of Figure \hyperlink{38b}{38b}), then reverse this cancellation, thereby inserting a detour. At the halfway point of this detour, use the trick from the first case in this paragraph.\\

\hypertarget{unlinklemmastep5}Step \hyperlink{unlinklemma5}{5}: Clear the immersion arcs from $\alpha_{[t_0,t_1]}$. The immersion locus $\iota_{[t',t_1]}$ is still embedded and now consists of genus-decreasing pairs that either bound pairs of disks contained in $[t',t_0-\epsilon]$ for some small $\epsilon>0$, or that are genus-decreasing pairs that intersect $\crit\alpha_{t_1}$. The former can be disregarded because their support is already disjoint from $M_{[t_0,t_1]}$ and is disjoint from the support of the following modifications to $\alpha$. For the latter, recall from Step \hyperlink{unlinklemmastep4}{4} the formation of a collection of lunes as the finger moves occur. 
The second half of Step \hyperlink{unlinklemma5}{5} is to temporarily contract these lunes, thereby including the impending local shift or multislide deformation into a higher-genus fibration with embedded critical image. Beginning at a value $t^{(4)}<t_0$ that comes after the last lune is formed, consider a detour $\delta$ given by contracting all of the lunes by repeated application of \cite[Proposition 2.5(3)]{W2}, then the reverse. 
As discussed in \cite[Remark 2.2]{W2}, the support of any 2-parameter crossing deformation that contracts one of these lunes is a ball in $M$, crossed with an interval in $t$. This ball in $M$ is a neighborhood of the critical arc $\varphi_0$ that maps to a neighborhood of the lune in \cite[Figure 2]{W1} even when $\varphi_0$ has cusps. If a critical value is contained in the boundary of more than one of these lunes, then these lunes are nested and the support of the cancellation homotopy for the outermost lune contains the support of that of the lunes it contains. Thus the support of the detour is a finite disjoint union of balls $S_\delta=\cup_iB_i\subset M$, crossed with an interval in $t$. 
As for the merge pair, let $S_{mp}=(\nu p_{fm}\cup\nu p_{cm})\times(a,b)$ denote the support of the associated generalized shift or multislide deformation (\emph{mp} stands for \emph{merge pair}), where $\nu p_{fm}$ is a tubular neighborhood of the fold merge path, $\nu p_{cm}$ is a neighborhood of the cusp merge path. 

Since any side $\ell$ of any bigon contracted by $\delta$ lies on the lower-genus side of the critical arc crossing $\ell$ at its endpoints, if $\ell$ contains an endpoint $z\in p_{fm}$, it is possible to push one of the crossings along $\ell$ past $z$, with the support of the homotopy automatically disjoint from $p_{fm}$. For this reason, it is possible to choose for each cancellation in the detour $\delta$ to push around a critical arc that does not contain any endpoint of $p_{fm}$. With this choice, it is possible to arrange for $S_\delta\cap(\nu p_{fm}\cup\nu p_{cm})=\emptyset$. In other words, it is possible to contract the lunes in such a way that the critical arcs whose images pass over the region where the merge pair occurs are bounded away from the curves $\nu p_{fm}$ and $\nu p_{cm}$. Restating it fiberwise, it is possible for the pairs of points corresponding to arcs moved by $\delta$ to avoid the two points corresponding to $p_{fm}$ and $p_{cm}$. 
With this understood, choose $c$ slightly larger than $b$ and replace $\alpha|_{\left(S_\delta\right)\times(t^{(4)},c)}$ with the detour $\delta$, where the first half of the detour occurs within the interval $\left(t^{(4)},a\right)$, then $\delta$ continues as a trivial deformation in which nothing occurs until $t=b$, at which point the reverse commences, re-forming the lunes. This concludes Step \hyperlink{unlinklemma5}{5}.

After applying steps \hyperlink{unlinklemma1}{1}-\hyperlink{unlinklemma5}{5} to all subsequent merge pairs, the resulting deformation satisfies the conclusion of Lemma \ref{unlinklemma}.\end{proof}

\subsubsection{Organizing the unlinked immersion locus}
This section lists the last few modifications to $\crit\alpha$. According to Lemma \ref{unlinklemma}, each twice-punctured critical torus associated to a merge pair is embedded by $\alpha$ into $S^2_I$, while the typical intervening critical cylinder between shift pairs is immersed. For this reason, $\alpha_t$ is as required in the conclusion of Theorem \ref{T} near those values of $t$ for which $\crit\alpha_t$ is disconnected, and it suffices to restrict attention to the intervening critical cylinders, where the goal is to first arrange for $\iota$ to be an embedded 1-submanifold of $\crit\alpha$. The following Sublemma is used in the proof of Lemma \ref{isolate}.

\begin{sublemma}\label{generator}Suppose $\crit\alpha_{[t_0,t_1]}\approx S^1_{[t_0,t_1]}$ and  $c\subset\iota_{[t_0,t_1]}$ is a circle that is not nullhomotopic in $\crit\alpha$. Then $[c]$ generates $H_1\left(S^1_{[t_0,t_1]}\right)\cong\Z$.\end{sublemma}
\begin{proof}Using Proposition \ref{isotopecrit}, arrange for the local minimal values of $T|_c$ to be contained in $(t_0-\epsilon,t_0)$ and for the local maximal values of $T|_c$ to be contained in $(t_1,t_1+\epsilon)$ for some small $\epsilon>0$. Then $c_{[t_0,t_1]}$ is an immersed braid in $\crit\alpha_{[t_0,t_1]}$ with its strands naturally partitioned into pairs like in Figure \ref{immerlocmods}. Note that two strands in the same partition are not allowed to cross, because the only crossings allowed for $\iota$ come from $R_3$ deformations (the required pattern of intersections appears in Figure \hyperlink{16b}{16c}). By Lemma \ref{circlepairs}, there is at least one swallowtail in $c$, say in $\crit\alpha_{(t_0-\epsilon,t_0)}$ after possibly reversing $t$. Follow the pair of paths traced by a pair of points $A,B$ forward along the two arcs of $c$ that emanate from the swallowtail until they enter $\crit\alpha_{(t_1,t_1+\epsilon)}$. If $A$ and $B$ do not approach distinct local maxima of $T|_c$ (where according to Figure \hyperlink{16b}{16b} an $R_2$ deformation occurs), then they reunite at another swallowtail, so that $c$ would represent an element of $\{-1,0,1\}\subset H_1(\crit\alpha_{[t_0,t_1]})$, satisfying the proposition. For this reason, suppose $A$ and $B$ approach distinct local maxima of $T|_c$. As $A$ and $B$ cross the maxima, they enter a pair of arcs which form another member of the partition according to Figure \hyperlink{16b}{16b}. Follow these until $A$ and $B$ return to $\crit\alpha_{(t_0-\epsilon,t_0)}$, and so on, until all of $c$ is traced out and the two points reunite, necessarily at a second swallowtail (note this, along with Lemma \ref{circlepairs}, gives another proof that every immersion single has exactly two swallowtails). Recording the map \[c\co S^1\hookrightarrow\crit\alpha_{[t_0-\epsilon,t_1+\epsilon]}\approx[t_0-\epsilon,t_1+\epsilon]\times S^1\to S^1\] in polar coordinates $(1,\theta(s))|_{s\in[0,1]}$, assume (using Proposition \ref{isotopecrit} if necessary) that the endpoints of the strands traced by $A$ map to even multiples of $\pi$ and those traced by $B$ map to odd multiples of $\pi$ (except for the two swallowtail points, which map to, say, even multiples of $\pi$). Parameterize the members of partition $i$ as $A_i(u),B_i(u)$, $u\in[0,1]$, where $T(A_i(u))=T(B_i(u))$ and the orientation of $B_i$ coming from $u$ is the opposite of its orientation coming from the polar coordinates, while the two orientations of $A_i(u)$ agree. The observation is that if strands in the same partition never cross, then the two contributions to the winding number \[\frac{\theta(1)-\theta(0)}{2}=\sum\limits_i\frac{\theta(A_i(1))-\theta(A_i(0))}{2}-\frac{\theta(B_i(1))-\theta(B_i(0))}{2}\] of each partition not containing a swallowtail cancel exactly, so that the winding number, in other words the element $[c]\in H_1\left(S^1_{[t_0,t_1]}\right)$, lies in $\{-1,0,1\}$. Finally, $c$ was assumed not to be nullhomotopic, so $[c]=\pm1$.\end{proof}

The model deformations from Section \ref{moves} do not involve $R_3$ deformations, and the following lemma is the main tool for eliminating them from $\alpha$. In light of Lemma \ref{reidcusp}, which should be understood to be used as needed, the cusp locus does not show up in the arguments until Lemma \ref{evertlemma}.

\begin{lemma}\label{r3moves}
Let $\alpha$ be a deformation such that there is a lune $L$ or triangle $\Delta$ in $\crit\alpha$ bounded by arcs in the immersion locus $\iota$.\begin{enumerate}
\item[(\hyperlink{r3moves(1)pf}{1})]\hypertarget{r3moves(1)}(Pinching lemma) As in Figure \ref{pinchfig}, suppose $a,b$ are points in immersion arcs such that:\begin{itemize}\item $T(a)=T(b)$\item $\alpha(a)\neq\alpha(b)$\item There is a smooth arc $f\co[0,1]\to\crit\alpha_{T(a)}$ which connects $a$ to $b$ and is otherwise disjoint from $\iota$,\item The image of $f$ lies on the lower-genus side of both immersion arcs.\item There is another arc $f'$ connecting the counterparts $a',b'$ of $a$ and $b$ with the analogous properties.\end{itemize}Then there is a homotopy of $\alpha$ realizing the modification that appears in that figure.
\item[(\hyperlink{r3moves(23)pf}{2})]\hypertarget{r3moves(2)}(Triangle move lemma) Assume $\partial\Delta$ is free of swallowtail points and that $c$ is a boundary arc of $\Delta$ with $\Delta$ on its lower-genus side. Then $\alpha$ is homotopic to another deformation with the same endpoints in which $c$ has moved across $\Delta$ in a Reidemeister-III type move (which this paper calls a \emph{triangle move}). This new deformation has exactly two more $R_3$ deformations; see Figure \ref{r3moves_2_fig}.
\item[(\hyperlink{r3moves(23)pf}{3})]\hypertarget{r3moves(3)} (Bigon move lemma) Assume $\partial L$ is free of swallowtail points and that $c$ is a boundary arc of $L$ with $L$ on its lower-genus side. Then the $R_3$ deformations that give the two corners of $L$ can be eliminated by a Reidemeister-II type move in $\iota$ (which this paper calls a \emph{bigon move}) without introducing other Reidemeister-III fold crossings. See Figure \ref{r3moves_3_fig}.
\item[(\hyperlink{r3moves(4)pf}{4})]\hypertarget{r3moves(4)} (Finger move lemma) Suppose an immersion arc $c$ lies parallel and adjacent to another immersion arc $d$ on its lower-genus side, and that $d$ is not the counterpart of $c$. Then it is possible to perform a finger move in $\iota$, pushing a small part of $c$ across $d$ at the expense of introducing a mixed or genus-increasing immersion pair and one pair of $R_3$ deformations. See Figure \hyperlink{46b}{46b}. \end{enumerate}\end{lemma}

It is straightforward to verify that all movements of critical points in Lemma \ref{r3moves} occur toward their lower-genus side, thereby increasing the genus of affected fibers. For this reason, any application of Lemma \ref{r3moves} preserves the conditions of Remark \ref{genusrmk}), that all fibers are connected and have genus at least 2. For brevity, applications of Lemma \ref{r3moves} will sometimes be called \emph{pinching moves, triangle moves, bigon moves and finger moves.}

\begin{figure}\capstart
	\centering	
	\labellist
	\small\hair 2pt
	\pinlabel $b$ at 11 26
	\pinlabel $f$ at 19 16
	\pinlabel $a$ at 11 5
	\endlabellist
	\includegraphics{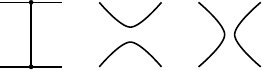}
	\caption{Modification of $\iota$ in Lemma \ref{r3moves}(\protect\hyperlink{r3moves(1)}{1}). Two horizontal immersion arcs undergo connect sum according to the reference path $f$. The same figure with labels $a',b',f'$ depicts what happens to the counterparts of these immersion arcs.\label{pinchfig}}
\end{figure}

\begin{figure}\capstart
	\labellist
	\small\hair 2pt
	\pinlabel $F$ at 83 132
	\pinlabel $\Delta$ at 71 132
	\pinlabel $c$ at 76 80
	\pinlabel $c$ at 235 80
	\pinlabel $\Delta$ at 27 25
	\pinlabel $F$ at 64 25
	\endlabellist
	\centering{\includegraphics{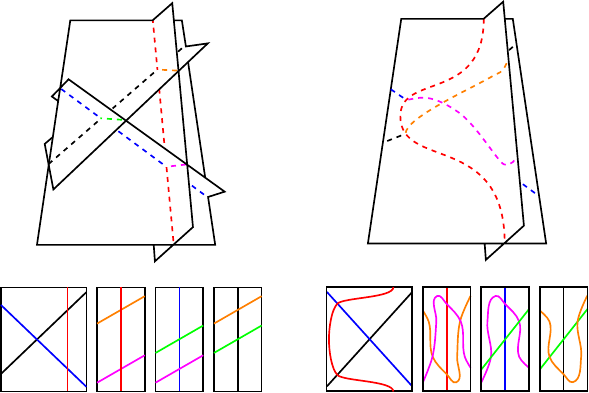}}
	\caption{Figure for Lemma \ref{r3moves}(\protect\hyperlink{r3moves(2)}{2}). The strip $F$ swept out by the fold passing through $c$ transverse to $\Delta$ bulges to the left in the right side of the figure. The two diagonal strips are omitted from the right side to avoid clutter, and decorated critical surfaces appear at bottom. Unlike other depictions of $\iota$, $t$ ranges vertically here for aesthetic reasons.}\label{r3moves_2_fig}
\end{figure}

\begin{figure}\capstart
	\labellist
	\small\hair 2pt
	\pinlabel $c$ at 2 199
	\pinlabel $L$ at 79 144
	\pinlabel $F$ at 98 144
	\pinlabel $c$ at 56 96
	\pinlabel $c$ at 166 96
	\pinlabel $c$ at 242 200
	\pinlabel $L$ at 33 34
	\pinlabel $F$ at 78 34
	\endlabellist
	\centering{\includegraphics{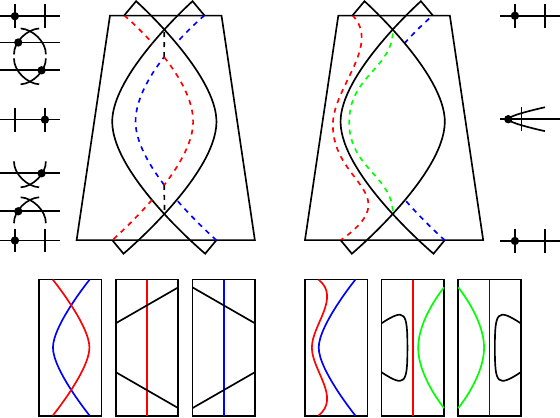}}
	\caption{Figure for Lemma \ref{r3moves}(\protect\hyperlink{r3moves(3)}{3}). The strip $F$ bulges to the left in the right side of the figure. Cross-sections (which are base diagrams when $t$ ranges vertically) are included on either side to help understand the picture, with $c$ marked by a dot. As depicted in the decorated critical surfaces below, the short immersion arcs passing transverse to $L$ at its corners undergo connect sum. Unlike other depictions of $\iota$, $t$ ranges vertically here for aesthetic reasons.}\label{r3moves_3_fig}
\end{figure}

\begin{proof}\hypertarget{r3moves(1)pf}Item (\hyperlink{r3moves(1)}{1}) is a simple application of \cite[Proposition 2.5(3)]{W2}. The curves $\alpha\circ f$ and $\alpha\circ f'$ comprise the relevant bigon in $S^2_{T(a)}$, and the modification is to insert a detour given by that fold crossing, followed by its reverse.

\hypertarget{r3moves(23)pf} Assertions (\hyperlink{r3moves(2)}{2}) and (\hyperlink{r3moves(3)}{3}) follow much like \cite[Propositions 2.5(1a) and 2.7]{W2}. For now, suppose $t$ parameterizes $c$ in the left side of Figures \ref{r3moves_2_fig} and \ref{r3moves_3_fig}, so that an open 2-disk $D$ of critical values containing $\Delta$ or $L$ is foliated by the horizontal arcs $D_t$. Denote by $F\subset S^2_I$ the thin strip swept out by the fold passing through $c$ transverse to $\Delta$ or $L$, as indicated in the figures. The proposed movement can be interpreted as a one-parameter family of finger moves on the level sets $F_t$, pushing the point $c_t$ along a smooth arc parallel to $D_t$ ($D$ appears as a possibly cusped arc passing vertically through the lunes in \cite[Figures 3 and 5]{W2}, and near $D$ the modifications just appear as moving the cusps in $D_t$ into the higher genus side of $F_t$). As discussed in the proofs of the propositions mentioned above, at each $t$ the finger move exists exactly when there is an arc $a_t$ in the base connecting the two pictured fold arcs (in \cite[Figures 3 and 5]{W2}) whose lift to $Z$ is disjoint from $Y$. For this reason, the moves suggested by Figures \ref{r3moves_2_fig} and \ref{r3moves_3_fig} exist when there is a one-parameter family of such arcs: a surface in $S\subset S^2_I$ parallel to $D$ whose lift to the one-parameter family of vanishing sets $Z=\bigcup_tZ_t$ is disjoint from the family $Y=\bigcup_tY_t$. For varying $t$, $Z\cap Y$ appears as a family of vertical arcs emanating from $F_t$ which are necessarily bounded away from intersections of $F_t$ with the other folds transverse to $D$, so $S$ can be constructed by taking advantage of the one-dimensional space of choices for each $a_t$, allowing those arcs to wander along $F$ for varying $t$ to stay disjoint from $\alpha(Z\cap Y)$.

The previous paragraph shows that the proposed moves exist as moves on deformations when $\Delta$ and $L$ appear as pictured, with $t$ varying vertically, or more generally as homotopy moves that can be applied to triangles and lunes in the space of stable maps from a 5-manifold to a 3-manifold. It remains to check that these moves can be performed for general triangles and lunes, staying within the class of deformations. Because they preserve $\crit\alpha$ as a stratified submanifold, it is enough to check they can be performed while keeping the image of the fold locus transverse to the level sets $S^2_t$. Because $\partial L$ and $\partial\Delta$ are free of swallowtails, their corners must come from two or respectively three $R_3$ deformations occurring at pairwise distinct $t$-values. For this reason, $L$ and $\Delta$ may deviate from the left sides of Figures \ref{r3moves_2_fig} and \ref{r3moves_3_fig} by the addition of critical points of $T|_{\partial L}$ or respectively $T|_{\partial\Delta}$. Such critical points may be pushed out of the boundary by appropriately applying Lemma \ref{isotopecrit}: One may retract $L$ or $\Delta$, pushing $c$ across any critical points in the other sides. The only part of this retraction which is not an ambient isotopy of the critical image is when $c$ crosses a cusp arc, but this corresponds to moving a cusp into the higher genus side of a fold, which is always allowed. This and Corollary \ref{simplecancelcor} eliminate all of the critical points of $T|_{\partial L}$ or $T|_{\partial\Delta}$ except possibly one critical point in $c$.

\hypertarget{r3moves(4)pf}Assertion (\hyperlink{r3moves(4)}{4}) follows quickly from considering Figure \ref{iotafingerbd}, which gives base diagrams for the first half of a detour whose initial finger move exists by \cite[Proposition 2.5(1a)]{W2}, and whose initial $R_3$ deformation exists by \cite[Proposition 2.9]{W2}.
\begin{figure}\hypertarget{44b}\capstart%
	\labellist
	\small\hair 2pt
	\pinlabel $c$ at 18 63
	\pinlabel $d$ at 18 11
	\pinlabel $c$ at 225 49
	\pinlabel $d$ at 225 24
	\endlabellist
	\centering
	\subfloat[Base diagrams for the first half of the detour.]{%
\begin{minipage}[c][6cm]{0.5\textwidth}\centering%
	\label{iotafingerbd}\includegraphics{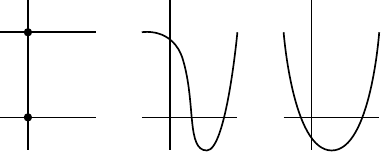}
\end{minipage}}\qquad 
	\subfloat[A depiction of $\iota$.]{%
\begin{minipage}[c][6cm]{0.42\textwidth}\centering%
	\label{iotafingercbd}\includegraphics{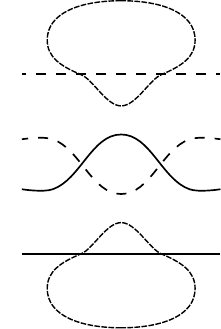} \qquad
\end{minipage}}
	\caption{Performing a finger move in $\iota$ for Lemma \ref{r3moves}(\protect\hyperlink{r3moves(4)}{4}).}\label{iotafinger}
\end{figure}
\end{proof}

It might be necessary to continue a finger move (Figure \ref{iotafinger}) forward or backward in $t$, which would require the finger's counterpart to also move forward or backward in $t$  (this counterpart would be one of the two arcs inside of the circles). When that happens, it is an instance of Lemma \ref{isotopecrit} in which any immersion arcs obstructing such movement end up moving along with the counterpart, preceding it in its path forward or backward in $t$. 

Having established the above tools, the idea is to arrange for the non-isolated part of $\iota$ to consist entirely of immersion pairs, then to eliminate crossings using a particular combination of the moves above. With that in mind, here is some terminology.

\begin{df}	A crossing in $\iota$ is \emph{primary} if it is a corner of a lune $L$ or triangle $\Delta$ as in Figures \ref{r3moves_2_fig} and \ref{r3moves_3_fig}. Given an arbitrary crossing in $\iota$, say the upper crossing in Figure \hyperlink{16b}{16c}, the other two crossings and the immersion arcs $c,c'$ are called \emph{secondary}.\end{df}

The secondary arcs in Figure \ref{r3moves_2_fig} are orange, fuchsia and green (for non-color renderings, they are the arcs in the narrow rectangles at the bottom of the figure that are not vertical). They are black in the bottom of Figure \ref{r3moves_3_fig} (the four arcs that undergo connect sum).

\begin{df}For short, call an immersion circle \emph{isolated} if it is embedded in $\crit\alpha$ and it is disjoint from all other immersion circles. A \emph{pseudostabilization} is a deformation $\alpha_{[t_1,t_2]}$ that resembles a flip-and-slip move. More precisely, it is any deformation satisfying the following criteria:\begin{itemize}\item The map $\alpha_{t_i}$ is an SPWF for $i=1,2$.\item There are no birth or merge points.\item The immersion locus of $\alpha$ consists of an isolated single that is not nullhomotopic in $\crit\alpha$.\end{itemize}A stabilization deformation is a pseudostabilization, the term \emph{pseudostabilization deformation} is redundant, and there is no \emph{pseudostabilization move} on surface diagrams defined in this paper (see Lemma \ref{pseudostabilizations}).\end{df}

\begin{lemma}\label{isolate}The deformation $\alpha$ resulting from Lemma \ref{unlinklemma} can be modified by homotopy so that each circle in $\iota$ is an isolated single that corresponds to a pseudostabilization or is one of an isolated pair.\end{lemma}
\begin{proof}
The immersion locus gives the $t$-interval $[0,1]$ a partition whose elements are of two types. For $a_i<b_i<a_{i+1}\ldots$, these are the subintervals $[a_i,b_i]$ over which $\iota_t$ is nonempty for all $t\in[a_i,b_i]$, and the complementary subintervals $(b_i,a_{i+1})$ over which $\iota_t=\emptyset$. By Lemma \ref{unlinklemma}, $\crit\alpha_{[a_i,b_i]}$ is a cylinder for all $i$. It is sufficient to prove the lemma by modifying $\alpha$ near $t$-values in $[a_1,b_1]$, for then the argument applies to the other intervals $[a_i,b_i]$, $i>1$. The proof proceeds by progressively modifying the immersion locus in that interval until it consists of isolated circles. The modifications tend to create a lot of isolated singles near the endpoints of the interval, and the endpoints of the interval are repeatedly redefined to omit these extraneous circles.

Lemmas \ref{isotopecrit}, \ref{reidcusp} and \ref{r3moves} together imply that an immersion arc can be isotoped toward its lower-genus side regardless of the presence of other cusp and immersion arcs, possibly at the expense of adding new immersion pairs (Lemmas \ref{reidcusp}(\hyperlink{reidcusp(1b)}{1b}) and \ref{r3moves}(\hyperlink{r3moves(4)}{4})) or new crossings between immersion circles (Lemma \ref{r3moves}(\hyperlink{r3moves(2)}{2}, \hyperlink{r3moves(4)}{4})). The idea of the proof is to first convert all of the immersion singles in that interval into pairs (using a technique that adds many non-isolated pairs), then, using the lemmas in the previous sentence, to isolate the pairs in a way that does not produce an unending cycle of crossing creation. It is a technique that tends to leave its worksite littered with many isolated pairs.

Using Lemma \ref{swalpushlem} and the trick from Figure \ref{swaljump}, push all reverse flipping moves forward in $t$ to lie in  $\crit\alpha_{b_1}$ and push all flipping moves backward in $t$ to lie in $\crit\alpha_{a_1}$. If necessary, slightly perturb $\alpha$ near the swallowtail points so they do not coincide with the Reidemeister-II fold crossings that occur at those $t$ values. Insert the detour from Figures \ref{giscbd1}-\hyperlink{39b}{39b} to one of the swallowtails at $t=a_1$. This converts the swallowtail deformation, which here corresponds to a flipping move, to a pseudostabilization followed by a genus-decreasing $R_2$ deformation and an inverse flipping move, whose swallowtail may be placed to have a $t$-value slightly greater than $a_1$. Use Lemma \ref{swalpushlem} to push another swallowtail at $a_1$ backward in $t$ to lie at a $t$-value just after the pseudostabilization, but before the $R_2$ deformation just produced. Then insert the detour from Figures \ref{giscbd1}-\hyperlink{39b}{39b} to it. One by one, convert the flipping moves at $t=a_1$ into pseudostabilizations and inverse flipping moves using this procedure, and do the same, with $t$ reversed, to the swallowtails at $t=b_1$. Since the sequence of pseudostabilizations and the sequence of inverse pseudostabilizations just constructed each has isolated immersion locus, restrict attention to the intervening immersion locus, calling the $t$-interval it inhabits $[a,b]$. By construction, $\iota_t$ is empty for $t$ slightly smaller than $a$ and slightly larger than $b$. Since the flipping move is only deformation that changes the parity of the number of crossings in the critical image, the number of flips in $[a,b]$ is congruent mod 2 to the number of inverse flips (note the flips are clustered near $t=b$ and the inverse flips near $t=a$). For some integer $n>0$, suppose there are $2n$ more swallowtails near $t=a$ than at $t=b$. Let $S_n$ denote a detour consisting of $n$ stabilizations followed by the reverse of those $n$ stabilizations, and insert $S_n$ starting at a value of $t$ slightly greater than $b$. Once this insertion has occurred, near the halfway point of the inserted copy of $S_n$ there occurs the final Reidemeister-II fold crossing of the sequence of stabilizations. Set $b$ to be the $t$-coordinate of this fold crossing and restrict attention to this new interval. If (instead of near $t=a$) there are $2n$ more swallowtails near $b$, similarly insert $S_n$ just before $t=a$ and redefine $a$ in the symmetric way, with $a$ now defined to be the $t$-coordinate of the first Reidemeister-II fold crossing in the sequence of inverse stabilizations of $S_n$. Now there is a collection of inverse flipping moves near $t=a$ and a collection of the same number of flipping moves near $t=b$. Choose a bijection $\beta$ between these two sets of swallowtails, and choose a swallowtail $\sigma$ near $t=a$. Using Lemma \ref{swalpushlem} and the trick from Figure \ref{swaljump}, push $\sigma$ forward in $t$ and possibly across cusp arcs to lie adjacent to $\beta(\sigma)$ as in Figure \ref{cancelswal}. Cancel the two swallowtails as in that figure. Repeat for all the other swallowtails near $t=a$. The immersion locus under consideration has now been modified so that all the crossings occur in a subinterval of $[a,b]$ during which $\iota$ consists entirely of immersion pairs, each of which are nullhomotopic in $\crit\alpha$ by Lemma \ref{circlepairs}. With this understood, assume $\iota_{[a,b]}$ is precisely that collection of immersion pairs.

Pick distinct points $p^q,p^{q'}\in\crit\alpha_a$. An immersion pair $(c,c')$ is \emph{visible} if there are disjoint paths $f^q$, $f^{q'}$ in $\crit\alpha\setminus\iota$ from $p^q$ to $q\in c$ and from $p^{q'}$ to $q'\in c'$, respectively, with positive $t$-derivative. Since $\iota_a=\emptyset$, this definition does not depend on $p^q,p^{q'}$. Viewing $\alpha$ as merely a homotopy of oriented maps $S^1\to S^2$, the new homotopy in which $q$ and $q'$ have been pushed along $f^q$ and $f^{q'}$ to lie in $\crit\alpha_a$ yields the fact that a visible pair is never a mixed pair ($q,q'$ would correspond to a Reidemeister-II move applied to a piecewise-smoothly embedded circle in $S^2$, which cannot be mixed).

\emph{Case 1:} \hypertarget{isolatecase1} Suppose there is a visible genus-increasing pair $(c,c')$. Using Lemma \ref{reidcusp}(\hyperlink{reidcusp(1a)}{1a}) if necessary and Lemma \ref{isotopecrit}, push $q$ and $q'$ along $f^q$ and $f^{q'}$ to lie in $\crit\alpha_a$. Insert the detour from Figures \ref{gipcbd1}-\hyperlink{39b}{40b} and regard the isolated pseudostabilization in Figure \hyperlink{39b}{40b} as part of $\alpha_{[0,a)}$, so that $(c,c')$ is now a single and the first immersion circle to appear in $\iota_{[a,b]}$; call that single $d$ and let $I_d\subset[a,b]$ denote the collection of $t$-coordinates of points in $d$. 
\begin{figure}\hypertarget{45b}\capstart%
	\labellist
	\small\hair 2pt
	\pinlabel $d$ at 7 7
	\pinlabel $a$ at 9 92
	\pinlabel $a'$ at 21 52
	\pinlabel $^\ast$ at 182 41
	\pinlabel $\nearrow$ at 8 35
	\pinlabel $\leftarrow$ at 12 52
	\pinlabel $\rightarrow$ at 14 96
	\pinlabel $\nwarrow$ at 258 78
	\pinlabel $\nearrow$ at 257 27
	\endlabellist
	\centering\capstart	
	\subfloat[\ ]{\label{mixed1}\includegraphics{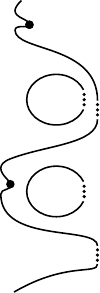}}\hspace{1cm}
	\subfloat[\ ]{\label{mixed2}\includegraphics{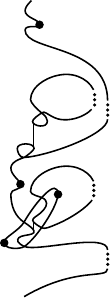}}\hspace{1cm}
	\subfloat[\ ]{\label{mixed3}\includegraphics{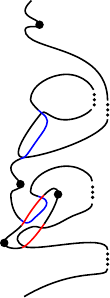}}\hspace{1cm}
	\subfloat[\ ]{\label{mixed4}\includegraphics{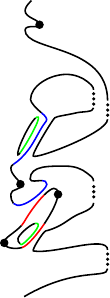}}
	\caption{Eliminating divided circles in Case \protect\hyperlink{isolatecase1a}{1a} of the proof of Lemma \ref{isolate}. The top and bottom of each figure is identified to form a small part of $\crit\alpha$. Arrows indicate the direction of decreasing genus. The bigon lemma is applied to the starred lune.}\label{mixed}
\end{figure}

\emph{Case 1a:}\ \hypertarget{isolatecase1a}Suppose $d$ is isolated. Then, as in Figure \ref{mixed1}, there may be \emph{divided} immersion pairs $(a,a')$, which means $a$ is on the higher-genus side of $d$ and $a'$ is on the lower-genus side of $d$ (assume $(a,a')$ is the first such pair to appear).  Generally a divided pair is not isolated, but the modifications in this case occur in an arbitrarily small neighborhood of the $t$-value at which the pair $(a,a')$ appears, so the present figure will suffice. The orientations of $a$ and $a'$ come from considering the immersion just before their appearance: This stage of the deformation originally came from a genus-increasing $R_2$ deformation applied to an SPWF, and an intersection between one side of the resulting bigon and one of the outer loops necessarily produces the depicted orientations. This step includes $(a,a')$ into $d$ without creating further divided or non-isolated circles. Figure \ref{mixed1} came from the previous paragraph. Figure \hyperlink{45b}{45b} comes from two applications of Lemma \ref{swalpushlem}. Figure \hyperlink{45b}{47c} comes from an application of the Pinching Lemma \ref{r3moves}(\hyperlink{r3moves(1)}{1}) followed by Corollary \ref{simplecancelcor} to eliminate canceling $T$-critical points. Figure \hyperlink{45b}{45d} comes from an application of the Bigon Lemma \ref{r3moves}(\hyperlink{r3moves(3)}{3}).

\emph{Case 1b:}\ \hypertarget{isolatecase1b} Suppose $d$ is not isolated. Figure \ref{sewing} gives a way to eliminate the first crossing in $d$. First use Lemma \ref{swalpushlem} to obtain Figure \hyperlink{46b}{46b}, then Lemma \ref{isotopecrit} for Figure \hyperlink{46b}{46c}, then apply the Bigon Lemma \ref{r3moves}(\hyperlink{r3moves(3)}{3}) to the tiny bigon in the center of that figure to obtain Figure \hyperlink{46b}{46d}. The placement of the topmost crossing is mostly irrelevant: If it was more properly depicted to have been on the lower-genus side of $d$, the same steps produce Figure \hyperlink{46b}{46e} instead. If that crossing was a self-crossing, then this modification increases the number of immersion pairs by 1. In that case, Figures \hyperlink{46b}{46d} and \hyperlink{46b}{46e} depict one of those pairs as separated by $d$ so that, once $d$ is isolated, and if that crossing was a self-crossing in a circle other than $d$, the modification of Figure \ref{mixed} is available to absorb that pair into $d$ without adding crossings. Finally, if any crossing in Figure \ref{sewing1} is a self-crossing in $d$, then all three crossings are self-crossings because $d$ is a single, which forces the secondary arcs of that crossing to also be in $d$. In that case, tracing orientations as in Remark \ref{tracingorientations} from the orientation of the swallowtail-containing arc in the lower two crossings in Figure \ref{sewing1} shows that the orientations of the arcs in the upper crossing are genus-decreasing from left to right, so that if the arc pair $(2,2')$ in Figure \hyperlink{46b}{46d} is in $d$ then the arc pair $(1,1')$ that just split off from $d$ is contained in an isolated pair because the crossing we just resolved was the first crossing in $d$. In Figure \hyperlink{46b}{46e}, if $(3,3')$ is in $d$ then $(4,4')$ (which similarly cannot be mixed) qualifies for \hyperlink{isolatecase1}{1} as soon as the algorithm is finished with what is left of $d$. If it turned out that $(2,2')$ or $(3,3')$ were not in $d$, then $(1,1')$ or $(4,4')$ are in $d$ respectively; do nothing to $(2,2')$ or $(3,3')$ until $d$ is isolated. If they are still not in $d$ at that point, then they are divided and qualify for Case \hyperlink{isolatecase1a}{1a}. This shows inductively that it is possible to isolate $d$ without increasing the number of non-isolated immersion circles. Moreover, if $g\co(\crit\alpha\setminus\iota)_{[a,b]}\to\N_{\geq2}$ is the fiber genus function, this process does not produce $g$-minimizing regions other than those bounded by isolated circles: If a self-crossing in $d$ is resolved, then the bigon move splits off a circle from $d$ which is either isolated or later re-integrated into $d$ in the course of isolating $d$ or by Case \hyperlink{isolatecase1a}{1a}. If the crossing was not a self-crossing in $d$, then the bigon move merges distinct circles, which could only decrease the number of regions ($g$-minimizing or not). Once this is accomplished, continue the algorithm from the beginning.
\begin{figure}\hypertarget{46b}\capstart%
	\labellist
	\small\hair 2pt
	\pinlabel 1 at 237 97
	\pinlabel 1' at 240 73
	\pinlabel 2 at 261 97
	\pinlabel 2' at 262 50
	\pinlabel 3 at 319 137
	\pinlabel 3' at 313 73
	\pinlabel 4 at 333 130
	\pinlabel 4' at 333 50
	\endlabellist
	\centering\capstart	
	\subfloat[\ ]{\label{sewing1}\includegraphics{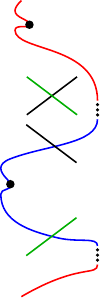}}\hspace{1cm}
	\subfloat[\ ]{\label{sewing2}\includegraphics{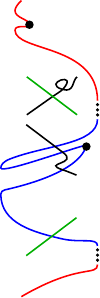}}\hspace{1cm}
	\subfloat[\ ]{\label{sewing3}\includegraphics{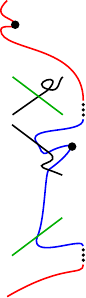}}\hspace{1cm}
	\subfloat[\ ]{\label{sewing4}\includegraphics{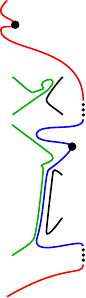}}\hspace{1cm}
	\subfloat[\ ]{\label{sewing5}\includegraphics{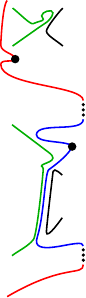}}
	\caption{Eliminating crossings from an initial genus-increasing pair. Counterparts are given like colors. The crossing not contained in $d$ may have been on the lower-genus side of $d$ instead; \protect\hyperlink{46b}{46e} is the resulting diagram.}\label{sewing}
\end{figure}

\emph{Case 2:}\ \hypertarget{isolatecase2} Consider the fiber genus function $g\co(\crit\alpha\setminus\iota)_{[a,b]}\to\N_{\geq2}$ and the following four moves.\begin{itemize}
 \item[(a)] A triangle move in which $\Delta$ lies on the lower-genus side of each of its three sides, 
 \item[(b)] A finger move in which the two primary immersion arcs approach each other's lower-genus sides,
 \item[(c)] The detour consisting of a flipping move immediately followed by its reverse,
 \item[(d)] The modification depicted in Figure~\ref{swalpush}.\end{itemize}
Tracing orientations as in Remark \ref{tracingorientations}, or simply examining appropriate base diagrams, shows that, in these moves, the movements of all primary and secondary arcs are genus-increasing. This implies that all new path components of $\crit\alpha\setminus\iota$ introduced by the move (a), that is, the three secondary bisected lunes created in the bottom right of Figure \ref{r3moves_2_fig}, have their interiors on the higher-genus sides of their boundaries. If $\Delta$ is one of the path components of $(\crit\alpha\setminus\iota)_{[a,b]}$ on which $g$ realizes its minimum value, then move (a) decreases the number of such components by 1. 
Referring to Figure \hyperlink{44b}{44b}, the new path components of $(\crit\alpha\setminus\iota)_{[a,b]}$ introduced by move (b) are as follows: a primary lune whose interior lies on the higher-genus side of its boundary and an embedded genus-increasing secondary pair of immersion circles, where each circle is bisected by the secondary arcs. None of these five regions minimize $g$. Move (c) introduces only an isolated genus-increasing single. Move (d) produces a small genus-increasing 1-gon in $\iota$. For these reasons, the only way these moves can increase the number of $g$-minimizing path components of $(\crit\alpha\setminus\iota)_{[a,b]}$ is to perform move (b) on a pair of arcs chosen from the boundary of a $g$-minimizing path component of $(\crit\alpha\setminus\iota)_{[a,b]}$.

If no visible pairs are genus-increasing and $\iota_{[a,b]}$ is not yet isolated, then there is a $g$-minimizing path component $A\subset(\crit\alpha\setminus\iota)_{[a,b]}$. Because it is $g$-minimizing, $A$ lies on the lower-genus side of its boundary. If $A$ is a triangle, perform move (a) to remove $A$.

If $A$ has more than three corners, perform moves (b) between nonconsecutive boundary arcs of $A$ until $A$ is cut into triangles and apply move (a) to each triangle. If it is not possible to do this because it would require a finger move between an immersion arc and its counterpart, in such a situation there is a loop in the critical image as in Figure \ref{2a0fig1}, and the detour shown there converts the pair containing $z$ and $z'$ into a single. Push the first swallowtail to $t=a$, convert it using Figure \ref{giscbd} and apply Case \hyperlink{isolatecase1b}{1b} to it (note the modifications in Figure \ref{sewing} only requires one swallowtail near $t=a$, and separated pairs can be reabsorbed by Case \hyperlink{isolatecase1a}{1a} once the single is isolated). The detour of Figure \ref{2a0fig2} splits $A$ into two $g$-minimizing regions and the push to $t=a$ possibly divides those regions further, but then the conversion of Figure \ref{giscbd} causes all of those regions to lie adjacent to a single which is then isolated, in sum decreasing the number of $g$-minimizing regions.
\begin{figure}\capstart%
	\centering
	\labellist
	\small\hair 2pt
	\pinlabel $z$ at 6 5
	\pinlabel $z'$ at 37 6
	\pinlabel $\nearrow$ at 170 37
	\pinlabel $\rightarrow$ at 3 33
	\endlabellist
	\subfloat[The configuration of immersion arcs on the left corresponds to the base diagram on the right, undergoing a trivial deformation.]{\makebox[11cm][c]{{\label{2a0fig1}\includegraphics{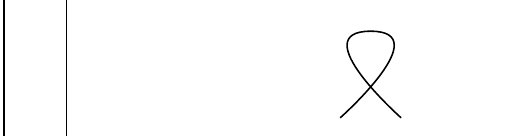}}}}\\
	\subfloat[The detour has base diagrams on the right consisting of a flip, a Reidemeister-II fold crossing, and the reverse.]{\makebox[11cm][c]{{\label{2a0fig2}\includegraphics{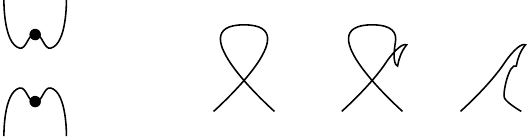}}}}
	\caption{When a finger move does not exist for Case \protect\hyperlink{isolatecase2a(0)}{2a (0 chords)}, a detour turns the offending pair into a single, almost immediately eligible for Case \protect\hyperlink{isolatecase1}{1}.}\label{2a0fig}
\end{figure}

If $A$ has two corners, perform moves (c) and (d) near the counterpart of one of its sides to turn $A$ into a triangle, then perform move (a) on that triangle.

If $A$ has one corner, apply the two-corner case with one extra (d) move, pushing the swallowtail back and forth across the relevant immersion arc to give two more corners to $A$. Then apply move (a).

If $A$ has no corners, then $A$ is bounded by isolated circles. Omit these circles from consideration when finding $g$-minimizing regions (passing an immersion arc over an isolated circle does not increase the number of crossings or $g$-minimizing regions with corners).

In this way, it is possible to eliminate $g$-minimizing regions until a genus-increasing visible pair appears (this is bound to happen because, eventually, the region containing $\crit\alpha_a$ becomes $g$-minimizing, at which point all visible pairs are genus-increasing).
\end{proof}

\begin{df}An isolated immersion circle $c$ is \emph{nested} if it or its counterpart bounds a disk in $\crit\alpha$ whose interior contains an immersion circle $d$ (for short, just say $c$ or $c'$ contains $d$). In this case, $d$ is also nested.\end{df}

\begin{lemma}\label{evertlemma}Suppose $\iota$ consists of isolated circles. Then, by a homotopy of $\alpha$, it is possible to arrange for all immersion circles to come from either pseudostabilizations or non-nested genus-decreasing immersion pairs.\end{lemma}
\begin{proof}The first step is to explain how to \emph{evert} a genus-decreasing circle.

\begin{figure}\hypertarget{48b}\capstart
	\centering
	\subfloat[\ ]{\label{evert1}\includegraphics{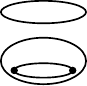}}\qquad
	\subfloat[\ ]{\label{evert2}\includegraphics{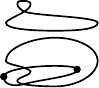}}\qquad
	\subfloat[\ ]{\label{evert3}\includegraphics{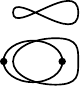}}\qquad
	\subfloat[\ ]{\label{evert4}\includegraphics{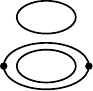}}\qquad
	\caption{Everting one circle of a bounding pair for Step \protect\hyperlink{unlinklemma5}{5} of the proof of Lemma \ref{unlinklemma}. This changes the bounding pair at left into a mixed pair (the top circle becomes genus-increasing).}\label{evert}
\end{figure} The moves in Figure \ref{evert} proceed from left to right, and mostly follow from previous arguments:
\begin{enumerate}
\item[\ref{evert1}] comes from inserting a detour given by a well-placed flip followed by its reverse, 
\item[\hyperlink{48b}{48b}] is an instance of Figure \ref{swalpush}, 
\item[\hyperlink{48b}{48c}] is an application of Lemma \ref{isotopecrit}, and 
\item[\hyperlink{48b}{48d}] is somewhat new. Figure \ref{contractlune} has the base diagrams corresponding to the relevant parts of Figures \hyperlink{48b}{48c}-\hyperlink{48b}{48d}. The move is merely a homotopy of $\alpha$ in which the ball on which the inverse flipping move is supported moves foreward in $t$. This movement exists because the existence of the required path of regular points exists, which in turn exist because of the disjointness condition that allows the $R_2$ deformation in Figure \ref{contractlune1}.
\end{enumerate}
\begin{figure}\hypertarget{49b}\capstart
	\centering
	\subfloat[\ ]{\label{contractlune1}\includegraphics{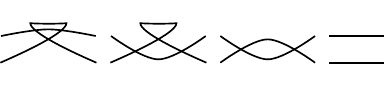}}\hspace{1.5cm}
	\subfloat[\ ]{\label{contractlune1a}\includegraphics{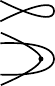}}\\
	\subfloat[\ ]{\label{contractlune2}\includegraphics{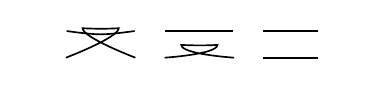}}\hspace{1.5cm}
	\subfloat[\ ]{\label{contractlune2a}\includegraphics{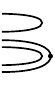}}
	\caption{Base diagrams for part of Figures \protect\hyperlink{48b}{48c} and \protect\hyperlink{48b}{48d}.}\label{contractlune}
\end{figure}
With this understood, the first modification is to use the Pinching Lemma \ref{r3moves}(\hyperlink{r3moves(1)}{1}) on any immersion single necessary and Lemma \ref{isotopecrit} to arrange for each single to occupy an interval in $t$ which is occupied by no other points of $\iota$. Now $\iota$ is a number of collections of isolated immersion pairs, with pseudostabilizations between each pair of collections.

The next step is to pinch all the immersion pairs to be disjoint from the cusp locus: Given a cusp arc with intersections, its intersections are partitioned into pairs that cancel up to isotopy of $\iota$ because the immersion circles are embedded. There is an \emph{innermost} pair in the sense that the cusp arc connecting them is otherwise disjoint from $\iota$ because the circles are disjoint. Use Lemmas \ref{isotopecrit} and \ref{reidcusp}(\hyperlink{reidcusp(1b)}{1b}) to cancel that pair of intersections and proceed inductively. With this completed, now follows an algorithm.\\

\noindent(1) Consider the first immersion pair. It is either genus-increasing or genus-decreasing, nested or non-nested; proceed to (2), (3) or (4) as appropriate. After applying one of the cases below, eliminate it from consideration and return to (1). Repeat until the conclusion of the Lemma is satisfied.\\

\noindent(2) If it is genus-increasing, convert it as in Figure \ref{gicbd}.\\

\noindent(3) If it is genus-decreasing and non-nested, use Lemma \ref{isotopecrit} to contract it so that it lies in a $t$-interval with no other immersion points.\\

\noindent(4) If the first pair $(c,c')$ is genus-decreasing and nested, consider the first pair $(d,d')$ nested inside $(c,c')$ and proceed to (4a), (4b) or (4c) as appropriate.\\

{\narrower\noindent(4a) If $d$ is inside $c$ and $d'$ is inside $c'$, then $(d,d')$ is genus-increasing (it is not mixed, as easily verified using base diagrams, and it is not genus-increasing because otherwise there would be disconnected fibers). Use the Pinching Lemma \ref{r3moves}(\hyperlink{r3moves(1)}{1}) on the two pairs to merge them into a single non-nested genus-decreasing pair and apply (3).\\ \par}

{\narrower\noindent(4b) If $d$ is inside $c$ and $d'$ is not inside $c$ and not inside $c'$, then it is straightforward to verify using base diagrams that $(d,d')$ is mixed, with $d$ genus-increasing and $d'$ genus-decreasing. Use the method from Figure \ref{mixed} to merge the circles (here, $(d,d')$ plays the role of $(a,a')$ in that figure, and note that the modifications there do not require $(c,c')$ to be a single). This creates an additional single and one mixed pair. Convert the single as in Figure \ref{giscbd} and evert the mixed pair, then convert the resulting singles as in FIgure \ref{giscbd} and the genus-increasing pair as in Figure \ref{gicbd}.\\ \par}

{\narrower\noindent(4c) If $c$ contains $d$ and $d'$, then $(d,d')$ is genus-iincreasing because otherwise, as verified using base diagrams, there would be disconnected fibers. Let $C$ be the disk bounded by $c$, let $D$ be the union of the two disks bounded by $(d,d')$, and let $F$ be the union of disks bounded by circles in $C\setminus D$. Use the Pinching Lemma \ref{r3moves}(\hyperlink{r3moves(1)}{1}) to pinch $(c,c')$ into smaller circles such that one of these circles (call it $e$) contains $D$ but no points of $F$, while its counterpart $e'$ contains no circles. Merge $d$ and $d'$ as in Figure \ref{2a0fig} and use the resulting single to evert $e'$. Convert the single and apply the method from Figure \ref{mixed} to the converted single and $(e,e')$, like was done in (4b).\\ \par}

\noindent Each iteration decreases the number of pairs that are nested or not genus-decreasing, so the algorithm eventually yields the required form for $\iota$.\end{proof}

\begin{lemma}\label{pseudostabilizations} By a homotopy of $\alpha$ is it possible to turn any pseudostabilization into deformation consisting of one stabilization and a collection of handleslides.\end{lemma}
\begin{proof}Given a pseudostabilization single $d$, use Corollary \ref{simplecancelcor} repeatedly until $T|_d$ has four extrema: Two coming from swallowtails $s_1$, $s_2\in d$ and the other two coming from a Reidemeister-II fold crossing. Next, use Lemma \ref{reidcusp} to remove all intersections between one component of $d\setminus\{s_1,s_2\}$ and the cusp locus (first push the swallowtails to lie in the same strip of fold points, then pinch along the cusps as in the paragraph just above (1) in the proof of Lemma \ref{evertlemma}. This converts $d$ into a stabilization single and pinches off a number of non-nested genus-decreasing pairs. Because their images lie in distinct regions of $S^2_{[0,1]}$, these pairs may be separated into distinct intervals in $t$ using Lemma \ref{isotopecrit}.\end{proof}

\begin{proof}[Proof of Theorem \ref{T}]\label{Tproof}The results of Section \ref{moves} that give the correspondence between model deformations and moves on surface diagrams show that a deformation consists of a sequence of model deformations exactly when its decorated critical surface is a concatenation of decorated critical surfaces whose decorations are characteristic of model deformations. The lemmas of Section \ref{proof} yield a deformation whose decorated critical surface is such a concatenation.\end{proof}

\end{document}